\numberwithin{equation}{section}
\newtheorem{theorem}{Theorem}[section]
\newtheorem{proposition}[theorem]{Proposition}
\newtheorem{lemma}[theorem]{Lemma}
\newtheorem*{thm1}{Theorem 1}
\newtheorem*{cor}{Corollary 1}
\newcommand{\cl}{\text{curl}}
\newcommand{\eps}{\epsilon}
\newcommand{\mb}{\mathbb}
\newcommand{\haus}{\mathcal{H}}
\newcommand{\lt}{\left}
\newcommand{\rt}{\right}
\newcommand{\R}{\mathbb{R}}
\newcommand{\blue}[1]{{\textcolor{black}{#1}}}
\title{Convergence of the Lawrence-Doniach Energy for Layered Superconductors with Magnetic Fields near $H_{c_1}$}
\author{Guanying Peng\footnote{Department of Mathematical Sciences, University of Cincinnati, Cincinnati, OH 45221, USA. E-mail: penggg@ucmail.uc.edu.}}
\date{}
\begin{document}

\maketitle

\begin{abstract}
	We analyze the Lawrence-Doniach model for three-dimensional highly anisotropic superconductors with layered structure. For such a superconductor occupying a bounded generalized cylinder in $\mathbb{R}^3$ with equally spaced parallel layers, we assume an applied magnetic field that is perpendicular to the layers with intensity $h_{ex}\sim|\ln\eps|$ as $\epsilon\rightarrow 0$, where $\epsilon$ is the reciprocal of the Ginzburg-Landau parameter. We prove Gamma-convergence of the Lawrence-Doniach energy as $\epsilon$ and the interlayer distance $s$ tend to zero, under the additional assumption that the layers are weakly coupled (i.e., $s\gg\eps$).
\end{abstract}


\section{Introduction}

This paper is devoted to the analysis of the Lawrence-Doniach model (with energy given by \eqref{LD}) for three-dimensional highly anisotropic superconductors with layered structure. Such discrete structure is common in high temperature superconductors (e.g., the cuprates). Because of the discrete layered structure, these superconductors exhibit very different material properties than isotropic superconductors, which can be well described by the celebrated Ginzburg-Landau model. (See the survey \cite{Iye} for a physical discussion on layered superconductors.) The Lawrence-Doniach model was proposed by Lawrence and Doniach \cite{LD} in 1971 as an alternative model to account for the anisotropy in layered superconductors. Unlike the Ginzburg-Landau model, which describes a superconductor as a continuous three-dimensional solid, the Lawrence-Doniach model treats the superconducting material as a stack of parallel superconducting layers with nonlinear Josephson coupling between them. It is generally considered a more complete theory for layered superconductors than other models (e.g., the anisotropic Ginzburg-Landau model).

\subsection{The energy functional}

We first recall the two-dimensional Ginzburg-Landau energy with magnetic fields. Let $\Omega\subset\mb R^2$ be a bounded simply connected smooth domain. Then the two-dimensional Ginzburg-Landau energy with magnetic fields is given by
\begin{equation*}
F_{\epsilon}(u,A)=\frac{1}{2}\int_{\Omega}\left[|\nabla_{A}u|^2+\frac{\lt(1-|u|^2\rt)^2}{2\epsilon^2}\right]dx+\frac{1}{2}\int_{\Omega}\lt(\text{curl}A-h_{ex}\rt)^2dx.
\end{equation*}
Here the first unknown $u:\Omega\rightarrow\mb C$ is the complex valued order parameter, whose modulus, $|u(x)|$, represents the density of superconducting electron pairs at the point $x$. For a minimizer of $F_{\eps}$, $|u(x)| \sim 1$ corresponds to a superconducting state at $x$, whereas $|u(x)|=0$ corresponds to a normal (nonsuperconducting) state at $x$. The second unknown $A=(A_1,A_2):\Omega\rightarrow\mb R^2$ is the magnetic potential, whose two-dimensional curl, $\cl A=\partial_1 A_2-\partial_2 A_1$, represents the induced magnetic field. The parameter $\epsilon>0$ is the reciprocal of the Ginzburg-Landau parameter $\kappa$, and $h_{ex}>0$ represents the strength of the applied magnetic field. We set $\nabla_{A}u=\nabla u-\imath Au$ on $\Omega$. The above energy $F_{\eps}$ represents the free energy of a cross-section of an infinitely long cylinder shaped superconductor subject to an applied magnetic field of intensity $h_{ex}$ that is perpendicular to the cross-section.

In this paper, we focus on the Lawrence-Doniach model. For some fixed $L>0$ and a bounded simply connected smooth domain $\Omega\subset\mb R^2$, we denote $D=\Omega\times(0,L)$, which is the bounded open cylinder in $\mb R^3$ with cross-section $\Omega$ and height $L$. We consider a layered superconductor occupying $\overline{D}$ with $N+1$ equally spaced layers of material occupying $\Omega_n=\Omega\times\{ns\}$, where $s=\frac{L}{N}$ is the interlayer distance. Assuming an applied magnetic field $h_{ex} \vec{e}_3$ which is perpendicular to the layers, the Lawrence-Doniach energy is given by
\begin{equation}\label{LD}
\begin{split}
\mathcal{G}_{LD}^{\epsilon, s}(\{u_n\}_{n=0}^N, \vec{A})&= s\sum^N_{n=0} \int_\Omega\left[\frac{1}{2}|\hat{\nabla}_{\hat{A}_{n}}u_n|^2+\frac{(1-|u_n|^2)^2}{4\epsilon^2}\right]d\hat{x}\\
&+s\sum^{N-1}_{n=0}\int_\Omega\frac{1}{2\lambda^{2}s^2}\lt|u_{n+1}-u_{n}e^{\imath \int_{ns}^{(n+1)s}A^{3}dx_{3}}\rt|^{2}d\hat{x}\\
&+\frac{1}{2}\int_{\mathbb{R}^3}\lt|\nabla\times{\vec{A}}-h_{ex}\vec{e}_3\rt|^{2}dx
\end{split}
\end{equation}
for $(\{u_n\}_{n=0}^N, \vec{A})$ such that
\begin{equation*}
\begin{cases}
&\{u_n\}_{n=0}^N\in [H^{1}(\Omega;\mathbb{C})]^{N+1} \quad\text{ and} \\
& \vec{A}\in E:=\{\vec{C}\in H^{1}_{loc}(\mathbb{R}^3;\mathbb{R}^3):(\nabla\times\vec{C})-h_{ex}\vec{e}_{3}\in L^{2}(\mathbb{R}^3;\mathbb{R}^3)\}.
\end{cases}
\end{equation*}
Similar to the Ginzburg-Landau energy, the first unknown $u_n:\Omega\rightarrow\mb C$ is the complex valued order parameter on the $n$th layer, and the second unknown $\vec{A}=(A^1,A^2,A^3):\mb R^3\rightarrow \mb R^3$ is the magnetic potential, whose three-dimensional curl, $\nabla\times\vec{A} = (\partial_2 A^3-\partial_3 A^2,\partial_3 A^1-\partial_1 A^3,\partial_1 A^2-\partial_2 A^1)$, is the induced magnetic field. The material parameter $\lambda >0$ represents the Josephson penetration depth, which is assumed to be fixed in this study. Throughout, we use $(\hat{\cdot})$ to denote two-dimensional vectors and operators. For example, we denote $\hat{x}=(x_1,x_2)$, $\hat{\nabla}=(\partial_{1},\partial_{2})$, $\hat{A}=(A^1,A^2)$ and $\hat{A}_n(\hat{x})=(A^{1}(\hat{x},ns),A^{2}(\hat{x},ns))$, the trace of $\hat{A}$ on the $n$th layer.

For the above models with magnetic fields, the behavior of energy minimizers is largely determined by the values of $h_{ex}$ versus $\eps$. Namely, there are two critical values of $h_{ex}$, denoted by $H_{c_1}\sim|\ln\epsilon|$ and $H_{c_3}\sim\frac{1}{\epsilon^2}$, at which the superconductor undergoes phase transitions from the superconducting state to the mixed state (coexistence of superconducting and normal states), and from the mixed state to the normal state, respectively. One of the central questions is to understand the vortex structure for minimizers with the strength of the magnetic field in different regimes. (A \emph{vortex} is an isolated zero of the order parameter $u$, around which $u$ has a nonzero winding number, called the \emph{degree of the vortex}.) For the underlying two-dimensional Ginzburg-Landau energy, the behavior of energy minimizers and their vortex structure in an applied magnetic field with modulus $h_{ex}$ in different regimes (e.g., $h_{ex}\sim|\ln\epsilon|,  |\ln\epsilon|\ll h_{ex}\ll\epsilon^{-2}$,  or $h_{ex}\geq\frac{C}{\epsilon^2}$) are now well understood. (See the book \cite{SS} and the references therein, and also \cite{JS} and \cite{Giorgi-Phillips}.) Recently, $\Gamma$-convergence results for the three-dimensional Ginzburg-Landau model in different energy regimes were obtained by Baldo et al. \cite{BJOS1}. For the Lawrence-Doniach energy, an analysis of minimizers for $h_{ex}$ in the first two regimes has been done by Alama et al. \cite{ABS} under certain periodicity assumptions. They also studied the cases when the magnetic fields are parallel to the layers or oblique in \cite{ABS} and \cite{ABS2}. Without the additional periodicity assumptions, Bauman and the author \cite{BP} proved an asymptotic formula for the minimum Lawrence-Doniach energy with $|\ln\epsilon|\ll h_{ex}\ll\epsilon^{-2}$ in the limiting case as $(\eps,s)\rightarrow (0,0)$, and obtained vortex structure information in this case. In the last regime, $h_{ex}\geq\frac{C}{\epsilon^2}$, it was shown by Bauman-Ko \cite{BK} that if $C$ is sufficiently large, all minimizers of the Lawrence-Doniach energy are in the normal phase. A similar result is known for the two-dimensional Ginzburg-Landau energy. (See \cite{Giorgi-Phillips}.) The goal of this paper is to investigate the limiting behavior of the Lawrence-Doniach energy with the intensity of the magnetic field in the regime $h_{ex}\sim|\ln\epsilon|$ without the additional periodicity assumptions.

Since $\vec{A}\in H^{1}_{loc}(\mathbb{R}^3;\mathbb{R}^3)$, by the trace theorem and the Sobolev embedding theorem, we have $\hat{A}_n\in H^{\frac{1}{2}}_{loc}(\mathbb{R}^2;\mathbb{R}^2) \subset L^4_{loc}(\mathbb{R}^2;\mathbb{R}^2)$. Therefore, the Lawrence-Doniach energy $\mathcal{G}_{LD}^{\epsilon, s}(\{u_n\}_{n=0}^N, \vec{A})$ is well-defined and finite. The existence of minimizers in the admissible space $ [H^{1}(\Omega;\mathbb{C})]^{N+1} \times E$ was shown by Chapman et al. \cite{CDG}. The minimizer satisfies the Euler-Lagrange equations associated to the Lawrence-Doniach energy given by
\begin{equation}\label{LDsystem}
\begin{cases}
(\hat{\nabla}-\imath\hat{A}_n)^{2}u_n+\frac{1}{\epsilon^2}(1-|u_n|^2)u_n+P_n=0 & \text{ on } \Omega,\\
\nabla\times(\nabla\times\vec{A})=(j_1,j_2,j_3) & \text{ in } \mathbb{R}^3,\\
(\hat{\nabla}-\imath\hat{A}_n)u_n\cdot\vec{n}=0 & \text{ on } \partial\Omega,\\
\nabla\times\vec{A}-h_{ex}\vec{e}_3\in L^2(\mathbb{R}^3;\mathbb{R}^3)
\end{cases}
\end{equation}
for all $n=0,1,\dddot\ ,N$, where
\begin{equation*}
P_n=\begin{cases}
\frac{1}{\lambda^{2}s^2}(u_{1}\bar{\Upsilon}_{0}^{1}-u_0) & \text{if $n=0$,}\\
\frac{1}{\lambda^{2}s^2}(u_{n+1}\bar{\Upsilon}_{n}^{n+1}+u_{n-1}\Upsilon_{n-1}^n-2u_n) & \text{if $0<n<N$,}\\
\frac{1}{\lambda^{2}s^2}(u_{N-1}\Upsilon_{N-1}^{N}-u_N) & \text{if $n=N$,}
\end{cases}
\end{equation*}
\begin{equation*}
\Upsilon_{n}^{n+1}=e^{\imath\int_{ns}^{(n+1)s}A^{3}dx_3} \quad\text{ for $n=0,1,\dddot\ ,N-1$,}
\end{equation*}
and
\begin{equation}\label{j}
\begin{split}
&j_i=-s\sum\limits_{n=0}^{N}(\partial_{i}u_{n}-\imath A_n^iu_n,-\imath u_n)\chi_{\Omega}(x_1,x_2)dx_{1}dx_{2}\delta_{ns}(x_3) \quad\text{ for $i=1,2,$}\\
&j_3=s\sum\limits_{n=0}^{N-1}\dfrac{1}{\lambda^{2}s^2}(u_{n+1}-u_{n}\Upsilon_{n}^{n+1},\imath u_{n}\Upsilon_{n}^{n+1})\chi_{\Omega}(x_1,x_2)\chi_{[ns,(n+1)s]}(x_3).
\end{split}
\end{equation}

We say that two configurations $(\{u_n\}_{n=0}^N, \vec{A})$ and $(\{v_n\}_{n=0}^N, \vec{B})$ in $[H^{1}(\Omega;\mathbb{C})]^{N+1}\times E$ are gauge-equivalent if there exists a function $g\in H^{2}_{loc}(\mathbb{R}^3)$ such that
\begin{equation}\label{gauge}
\begin{cases}
u_n(\hat{x})=v_n(\hat{x})e^{\imath g(\hat{x},ns)} &\text{ in } \Omega,\\
\vec{A}=\vec{B}+\nabla g &\text{ in } \mathbb{R}^3.\\
\end{cases}
\end{equation}
By direct calculations, one can check that $\mathcal{G}_{LD}^{\epsilon, s}$ (and each term in $\mathcal{G}_{LD}^{\epsilon, s}$) is invariant under the above gauge transformation, i.e., for two configurations $(\{u_n\}_{n=0}^N, \vec{A})$ and $(\{v_n\}_{n=0}^N, \vec{B})$ that are related by \eqref{gauge}, we have $\mathcal{G}_{LD}^{\epsilon, s}(\{u_n\}_{n=0}^N, \vec{A})=\mathcal{G}_{LD}^{\epsilon, s}(\{v_n\}_{n=0}^N, \vec{B})$. Let $\vec{a}=\vec{a}(x)$ be any fixed smooth \blue{divergence-free} vector field on $\mathbb{R}^3$ such that $\nabla\times\vec{a}=\vec{e}_3$ in
$\mathbb{R}^3$. Define the space $\check{H}^1(\mathbb{R}^3;\mb R^3)$ to be the completion of $C_0^{\infty}(\mathbb{R}^3;\mathbb{R}^3)$ with respect to the seminorm
\begin{equation*}
\lVert\vec{C}\rVert_{\check{H}^1(\mathbb{R}^3;\mb R^3)}=(\int_{\mathbb{R}^3}|\nabla\vec{C}|^2dx)^{\frac{1}{2}}.
\end{equation*}
From \cite{BK}, each $\vec{C}\in\check{H}^1(\mathbb{R}^3;\mb R^3)$ has a representative in $L^6(\mathbb{R}^3;\mathbb{R}^3)$ such that
\begin{equation}\label{H11}
\lVert\vec{C}\rVert_{L^6(\mathbb{R}^3;\mathbb{R}^3)}\leq 2\lVert\vec{C}\rVert_{\check{H}^1(\mathbb{R}^3;\mb R^3)},
\end{equation}
and
\begin{equation}\label{H12}
\lVert\vec{C}\rVert_{\check{H}^1(\mathbb{R}^3;\mb R^3)}^2=\int_{\mathbb{R}^3}(|\nabla\cdot\vec{C}|^2+|\nabla\times\vec{C}|^2)dx.
\end{equation}
Define the space $K$ to be
\begin{equation*}
K:=\{\vec{C}\in E:\nabla\cdot\vec{C}=0 \text{ and } \vec{C}-h_{ex}\vec{a}\in \check{H}^1(\mathbb{R}^3;
\mb R^3)\cap L^6(\mathbb{R}^3;\mathbb{R}^3)\}.
\end{equation*} 
It was proved in \cite{BK} that every pair $(\{u_n\}_{n=0}^N, \vec{A})\in[H^1(\Omega;\mathbb{C})]^{N+1}\times E$ is gauge-equivalent to another pair $(\{v_n\}_{n=0}^N, \vec{B}) \in [H^1(\Omega;\mathbb{C})]^{N+1}\times K$. In particular, any minimizer of $\mathcal{G}_{LD}^{\epsilon,s}$ in the admissible space $[H^1(\Omega;\mathbb{C})]^{N+1}\times E$ is gauge-equivalent to a minimizer in the space $[H^1(\Omega;\mathbb{C})]^{N+1}\times K$, called the ``Coulomb gauge" for $\mathcal{G}_{LD}^{\epsilon,s}$. It was also shown in \cite{BK} that a minimizer $(\{u_n\}_{n=0}^N, \vec{A})$ of $\mathcal{G}_{LD}^{\epsilon,s}$ satisfies $|u_n|\leq 1$ a.e. in $\Omega$, and that a minimizer in the Coulomb gauge satisfies $u_n \in C^{\infty}(\Omega;\mb C)$ and $\hat{A}_n \in H^1_{\text loc}(\mathbb{R}^2;\mb R^2)$ for all $n=0,1,\dddot\ ,N$. 

\subsection{Statement of the main results}

To state our main results, let us introduce some notations. Following notations in \cite{JS1}, for a function $u\in H^1(\Omega;\mathbb{C})$, we define the \emph{current} $j(u)$ and the \emph{Jacobian} $J(u)$ to be
\begin{equation*}
j(u)=(\imath u,\hat{\nabla}u),\quad J(u)=\frac{1}{2}\text{curl}j(u),
\end{equation*}
respectively. Here $(\imath u,\hat{\nabla}u)$ is a real vector in $\mathbb{R}^2$ with components $(\imath u,\partial_j u)$ for $j=1,2$, where $(a+\imath b,c+\imath d)=ac+bd$ for two complex numbers $a+\imath b$ and $c+\imath d$. For the two-dimensional Ginzburg-Landau energy, the Jacobian carries important topological information of vortices (e.g., degree and location). For a configuration $(\{u_n\}_{n=0}^N,\vec A)\in[H^1(\Omega;\mathbb{C})]^{N+1}\times E$, we define the discrete version of the current and the Jacobian as
\begin{equation}\label{ja}
j^{\epsilon,s}(\{u_n\}_{n=0}^N)=\sum\limits_{n=0}^{N-1}j(u_n)\chi_n(x_3), \quad J^{\epsilon,s}(\{u_n\}_{n=0}^N)=\sum\limits_{n=0}^{N-1}J(u_n)\chi_n(x_3),
\end{equation}
respectively, where
\begin{equation}\label{chi}
\chi_n(x_3) = 
\begin{cases}
\chi_{(0,s)}(x_3) & \text{ for } n=0,\\
\chi_{[ns,(n+1)s)}(x_3) & \text{ for } n=1,\dddot\ ,N-1.
\end{cases}
\end{equation}
Given some constant $h_0\geq 0$, we define spaces $E_0$ and $K_0$ that are parallel to $E$ and $K$. More precisely, let
\begin{equation*}
E_0:=\{\vec{C}\in H^{1}_{loc}(\mathbb{R}^3;\mathbb{R}^3):(\nabla\times\vec{C})-h_{0}\vec{e}_{3}\in L^{2}(\mathbb{R}^3;\mathbb{R}^3)\},
\end{equation*}
and
\begin{equation}\label{K0}
K_0:=\{\vec{C}\in E_0:\nabla\cdot\vec{C}=0 \text{ and } \vec{C}-h_{0}\vec{a}\in \check{H}^1(\mathbb{R}^3;\mb R^3)\cap L^6(\mathbb{R}^3;\mathbb{R}^3)\}.
\end{equation}
Let $\mathcal{M}(D)$ be the space of finite Radon measures. Also we define the space
\begin{equation}\label{V}
V:=\{v\in L^2(D;\mb R^2): \cl v \in \mathcal{M}(D)\}.
\end{equation}
For a pair $(v,\vec A)\in L^2(D;\mb R^2)\times E_0$, we define the energy functional
\begin{equation}\label{G0}
\mathcal{G}_{h_0}(v,\vec A):=\frac{1}{2}\left[\parallel v-\hat A \parallel_{L^2(D)}^2 + |\text{curl}v|(D) + \parallel \nabla\times\vec A - h_0\vec e_3\parallel_{L^2(\mathbb R^3)}^2\right],
\end{equation}
where $|\text{curl}v|(D)$ denotes the total variation of the Radon measure $\text{curl}v$, with the convention to understand $\mathcal{G}_{h_0}(v,\vec A)$ equal to $+\infty$ if $\text{curl}v\notin\mathcal{M}(D)$. Our main result is the following $\Gamma$-convergence of the Lawrence-Doniach energy:

\begin{thm1}\label{T1}
	Assume $\lim_{\epsilon\rightarrow 0}\frac{h_{ex}}{|\ln\epsilon|}=h_0$ for some $0\leq h_0<\infty$ and
	\begin{equation}\label{h}
	s|\ln\epsilon|\rightarrow\infty \quad \text{ as } (\epsilon,s)\rightarrow (0,0).
	\end{equation}
	
	\noindent\emph{(Compactness and lower bound)} For any sequence $(\{u^{\epsilon}_n\},\vec{A}^{\epsilon,s})\in [H^1(\Omega;\mathbb{C})]^{N+1}\times K$ such that
	\begin{equation}\label{thm1.5}
	\mathcal{G}_{LD}^{\epsilon,s}(\{u_n^{\eps}\},\vec{A}^{\epsilon,s}) \leq C_0|\ln\epsilon|^2,
	\end{equation}
	where $C_0$ is a constant independent of $\epsilon$ and $s$, we have, up to a subsequence as $(\epsilon,s) \rightarrow (0,0)$, 
	\begin{equation}\label{thm1.1}
	\sum\limits_{n=0}^{N-1}\frac{j(u_n^{\eps})}{|u_n^{\eps}||\ln\eps|}\chi_n \rightharpoonup v \text{ in } L^2(D;\mathbb{R}^2), \quad \frac{j^{\epsilon,s}}{|\ln\epsilon|}\rightharpoonup v \text{ in } L^{\frac{4}{3}}(D;\mathbb{R}^2),
	\end{equation}
	\begin{equation}\label{thm1.01}
	\frac{J^{\epsilon,s}}{|\ln\epsilon|}\rightharpoonup w \text{ in } (C^{0,\alpha}_c(D))^{*} \text{ for all } 0<\alpha\leq 1,
	\end{equation}
	and
	\begin{equation}\label{thm1.2}
	\frac{\vec A^{\epsilon,s} - h_{ex}\vec a}{|\ln\epsilon|} \rightharpoonup \vec A-h_0\vec a \text{\quad in } \check H^1(\mathbb R^3;\mathbb R^3)
	\end{equation}
	for some $v\in L^2(D;\mathbb{R}^2)$ such that $w=\frac{1}{2}\mathrm{curl}v$ is a Radon measure, and for some $\vec A\in h_0\vec a+\check{H}^1(\mathbb{R}^3;\mathbb{R}^3)$. In addition, we have
	\begin{equation}\label{thm1.3}
	\liminf_{(\epsilon,s)\rightarrow(0,0)}\frac{\mathcal{G}_{LD}^{\epsilon, s}\blue{(\{u_n^{\eps}\},\vec{A}^{\epsilon,s})}}{|\ln\epsilon|^2} \geq \mathcal{G}_{h_0}(v,\vec A).
	\end{equation}
	
	\noindent\emph{(Upper bound)} Given $(v,\vec A)\in V\times K_0$, there exists a sequence $(\{\blue{\tilde{u}_n^{\epsilon}}\},\blue{\tilde{\vec{A}}^{\epsilon}})$ satisfying the compactness results \eqref{thm1.1}-\eqref{thm1.2}. Furthermore, we have
	\begin{equation}\label{thm1.4}
	\limsup_{(\epsilon,s)\rightarrow(0,0)}\frac{\mathcal{G}_{LD}^{\epsilon, s}(\{\blue{\tilde{u}_n^{\epsilon}}\},\blue{\tilde{\vec{A}}^{\epsilon}})}{|\ln\epsilon|^2} \leq \mathcal{G}_{h_0}(v,\vec A).
	\end{equation}
\end{thm1}

\blue{Note that the constructed magnetic potential $\tilde{\vec{A}}^{\eps}$ in the recovery sequence does not depend on $s$.} The regime for $h_{ex}$ under consideration is a lot more subtle than the higher regime studied in \cite{BP}, since for the regime $h_{ex}\sim|\ln\eps|$, the superconductor undergoes a phase transition from the superconducting state to the mixed state. \blue{The same regime for the Lawrence-Doniach model was considered in the work of Alama et al. \cite{ABS}, in which the energy is minimized among configurations whose gauge-invariant quantities are periodic with respect to a given parallelepiped.} In that case, the periodicity assumptions simplify the problem significantly. Namely, it was proved that, for a minimizer of the gauge periodic problem, the order parameters $u_n$ are all equal and $A^3$ is identically zero. In particular, the Josephson coupling term
\begin{equation}\label{JC}
s\sum^{N-1}_{n=0}\int_\Omega\frac{1}{2\lambda^{2}s^2}\lt|u_{n+1}-u_{n}e^{\imath \int_{ns}^{(n+1)s}A^{3}dx_{3}}\rt|^{2}d\hat{x}
\end{equation}
vanishes. As a result, the Lawrence-Doniach energy reduces to a sum of copies of the two-dimensional Ginzburg-Landau energies on the layers. \blue{In our case} without the periodicity \blue{and energy minimizer} assumptions, such dimension reduction techniques do not work, \blue{although the assumption (\ref{h}) makes the treatment of the coupling term (\ref{JC}) very easy.} Note that this assumption is only used to guarantee that the energy from (\ref{JC}) with a scaling factor $|\ln\eps|^2$ converges to zero in the limit. 

Theorem \ref{T1} extends the $\Gamma$-convergence results on the two-dimensional (see \cite{SS1} and \cite{JS}) and three-dimensional (see \cite{BJOS1}) Ginzburg-Landau models to the Lawrence-Doniach model. Our results are parallel to those on the former. Here, the $\Gamma$-limit $\mathcal{G}_{h_0}$ defined in \eqref{G0} differs from those for the Ginzburg-Landau models, in that $\mathcal{G}_{h_0}$ includes both two-dimensional (terms involving $v$) and three-dimensional (terms involving $\vec A$) functions. In particular, the limiting Jacobian, $\cl v$, is a scalar valued measure and does not include the $x_3$ derivative of $v$. Therefore, our problem has features of both the two-dimensional and three-dimensional Ginzburg-Landau models. Nevertheless, the Lawrence-Doniach model shares more features with the three-dimensional Ginzburg-Landau model. Indeed, for the Ginzburg-Landau models with magnetic fields in the regime considered here, through a convex duality argument, one can rewrite the minimization of the limiting energy functional in the form of a constrained minimization problem of obstacle type (see \cite{SS1}, \cite{JS} and \cite{BJOS2}). A major difference between the two-dimensional and three-dimensional cases is that, in the latter case, the obstacle becomes nonlocal. This makes the analysis of the three-dimensional obstacle type problem much more challenging than its two-dimensional counterpart. (See \cite{BJOS2} for more details.) Our problem of minimization of the limiting functional $\mathcal{G}_{h_0}$ also corresponds to a nonlocal obstacle problem. \blue{It would be interesting to further investigate the critical value $h_{*}$ for $h_0$ in the limiting functional $\mathcal{G}_{h_0}$, below which minimizers of $\mathcal{G}_{h_0}$ satisfy $\text{curl}v=0$. The leading order of the first critical field $H_{c_1}$ of the Lawrence-Doniach energy is then given by $h_{*}|\ln\epsilon|$. The characterizations of $H_{c_1}$ for the three-dimensional Ginzburg-Landau model were obtained in \cite{ABM} when the domain is a ball, and in \cite{BJOS2} for general domains, through very different arguments.} A recent work of Athavale et al. \cite{AJNO} includes detailed discussions on the minimization of energy functionals involving total variation measures.

\blue{The extra assumption \eqref{h} in Theorem \ref{T1} makes the Lawrence-Doniach model in the extremely discrete scenario.} In some sense, \blue{this assumption} imposes a weak coupling condition between adjacent layers in the Lawrence-Doniach model. Nevertheless, the regime under consideration is an interesting regime. In fact, when $s\ll\epsilon$, it is expected that the Lawrence-Doniach model converges, in some sense, to an anisotropic Ginzburg-Landau model (see \cite{CDG} and \cite{BK}), which is not significantly different from the standard Ginzburg-Landau model in certain aspects. The assumption \eqref{h} is a key factor that leads to the mixture of two-dimensional and three-dimensional terms in the limiting functional $\mathcal{G}_{h_0}$, and consequently, that makes the problem more different from the three-dimensional Ginzburg-Landau model. It is not clear to the author whether or not the limiting functional $\mathcal{G}_{h_0}$ derived in Theorem \ref{T1} also serves as the $\Gamma$-limit of $\mathcal{G}_{LD}^{\epsilon, s}$ with $\eps$ and $s$ in other regimes, especially when $s\ll\epsilon$. 

The proof of the compactness and lower bound in Theorem \ref{T1} uses a standard slicing argument that has been used by Jerrard-Soner \cite{JS1} and Sandier-Serfaty \cite{SS2} for the Ginzburg-Landau functional in higher dimensions. A key point is to use small balls to cover the Jacobian in order to separate the energy contribution of the Jacobian from that of the other terms (see Theorem \ref{lem2.3}). 

The proof of the upper bound is a lot more involved. In Section 4, we construct the order parameters on the layers. Essentially, on each layer, we follow the construction of test functions for the two-dimensional Ginzburg-Landau energy used in \cite{SS1} and \cite{JS}. Here an extra level of subtlety comes from the limiting process as the interlayer distance $s$ approaches zero, which creates some extra technical difficulties. In Section 5, we construct the magnetic potential by slightly modifying that of the given configuration $(v,\vec{A})$.

As a consequence of Theorem \ref{T1}, we have the following \blue{compactness result for energy minimizers}:

\begin{cor}\label{cor1}
	Assume $\lim_{\epsilon\rightarrow 0}\frac{h_{ex}}{|\ln\epsilon|}=h_0$ for some $0\leq h_0<\infty$ and the hypothesis (\ref{h}). Let $(\{u_n^{\eps}\},\vec{A}^{\epsilon,s})\in[H^1(\Omega;\mathbb{C})]^{N+1}\times K$ be minimizers of $\mathcal{G}_{LD}^{\epsilon,s}$. We have, up to a subsequence as $(\eps,s)\rightarrow(0,0)$,
	\begin{equation*}
	\nabla\times\lt(\nabla\times\frac{\vec A^{\epsilon,s}}{|\ln\eps|}\rt)\rightharpoonup \lt((v_0^1-A_0^1)\chi_D,(v_0^2-A_0^2)\chi_D,0\rt) \quad\text{ in } \mathcal{M}(\mb R^3;\mb R^3),
	\end{equation*}
	where \blue{$(v_0,\vec A_0)$ is a minimizer of $\mathcal{G}_{h_0}$ and} $\chi_D$ is the characteristic function of the domain $D$.
\end{cor}

The above Corollary \ref{cor1} gives more compactness for the magnetic potential \blue{of energy minimizers} than has been obtained in Theorem \ref{T1} \blue{for general sequences with energy upper bound}. Note that, \blue{for minimizers of the Lawrence-Doniach energy,} $\nabla\times\lt(\nabla\times\vec A\rt)$ is a sum of singular measures supported on the layers, as can be seen from the Euler-Langrange equations \eqref{LDsystem}. As a result, the magnetic potential has less regularity than the order parameters. \blue{It was proved in \cite{BK} that $A^1$ and $A^2$ can be represented using sum of single layer potentials defined on the layers.} A major challenge in the analysis of the Lawrence-Doniach model comes from the discrete structure of the problem. In the limit as the interlayer distance $s$ tends to zero, we need to show compactness for discrete quantities in the form of the discrete current and Jacobian defined in \eqref{ja}. Such compactness results are more difficult to establish for the magnetic potential $\vec A$ for the reasons mentioned above. It is crucial to understand how the vector field $\vec A$ relates to its traces in this particular context. \blue{In \cite{BP}, some powerful a priori estimates for the magnetic potential were established based on the results in \cite{BK}. Those estimates turned out to be very useful for the analysis of the Lawrence-Doniach model.}

\subsection{Outline of the paper}

This paper is organized as follows: in Section 2, we provide some preliminaries that are needed in later sections. The proof of Theorem \ref{T1} constitutes the major part of this paper, and is included in Sections 3 through 6. We conclude Section 6 with the proof of Corollary \ref{cor1}. Finally, the last section is an appendix, which contains some approximation and extension results for the space $V$.

\section{Preliminaries}

In this section, we provide some preliminary results that are needed for later sections. First we prove existence of minimizers of the limiting functional $\mathcal{G}_{h_0}$ defined in \eqref{G0}. The proof is standard and follows from the direct method in the calculus of variations. However, since the functional $\mathcal{G}_{h_0}$ contains a mixture of two-dimensional and three-dimensional functions, we believe that it is worth including a proof here.

\begin{proposition}\label{prop2.1}
	The minimum of $\mathcal{G}_{h_0}$ over $V\times E_0$ and over $V\times K_0$ is achieved. Moreover, we have
	\begin{equation*}
	\min_{V\times E_0} \mathcal{G}_{h_0} = \min_{V\times K_0} \mathcal{G}_{h_0}.
	\end{equation*}
\end{proposition}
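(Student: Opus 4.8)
The plan is to minimize over $V\times K_0$ first, where the Coulomb gauge condition $\nabla\cdot\vec C=0$ supplies the coercivity needed to control the magnetic potential, and then to transfer the result to $V\times E_0$ by a gauge-invariance argument. The key structural observation is that, since $\nabla\cdot\vec a=0$ and $\nabla\times\vec a=\vec e_3$, for any $\vec C\in K_0$ the field $\vec C-h_0\vec a$ is divergence free, so \eqref{H12} gives
\[
\|\vec C-h_0\vec a\|_{\check H^1(\mb R^3)}^2=\int_{\mb R^3}|\nabla\times\vec C-h_0\vec e_3|^2\,dx,
\]
and \eqref{H11} then bounds $\|\vec C-h_0\vec a\|_{L^6(\mb R^3)}$. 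Thus on $K_0$ the magnetic term in \eqref{G0} is comparable to the full $\check H^1$ seminorm of $\vec C-h_0\vec a$; this is exactly what makes the direct method work, and it is available only in the Coulomb gauge.

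First I would take a minimizing sequence $(v_k,\vec A_k)\in V\times K_0$ with $\mathcal{G}_{h_0}(v_k,\vec A_k)\le M$. By the boundedness of the magnetic term and the identity above, $\vec A_k-h_0\vec a$ is bounded in $\check H^1\cap L^6$; passing to a subsequence, $\vec A_k-h_0\vec a\rightharpoonup\vec B$ in $\check H^1$ and in $L^6$, and I set $\vec A_0:=h_0\vec a+\vec B$. The constraint $\nabla\cdot(\vec C-h_0\vec a)=0$ and the $\check H^1\cap L^6$ membership are linear closed conditions preserved under weak limits, so $\vec A_0\in K_0$, and $\nabla\times\vec A_k-h_0\vec e_3\rightharpoonup\nabla\times\vec A_0-h_0\vec e_3$ in $L^2(\mb R^3)$. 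Restricting to the bounded set $D$ gives $\hat A_k\rightharpoonup\hat A_0$ in $L^6(D)\hookrightarrow L^2(D)$. The bound on $\|v_k-\hat A_k\|_{L^2(D)}$ together with the boundedness of $\hat A_k$ forces $v_k$ to be bounded in $L^2(D;\mb R^2)$, so $v_k\rightharpoonup v_0$ in $L^2(D;\mb R^2)$ along a further subsequence. For any $\phi\in C_c^\infty(D)$ one has $\langle\cl v_k,\phi\rangle\to\langle\cl v_0,\phi\rangle$ since $v_k\rightharpoonup v_0$ in $L^2(D)$; combined with $|\cl v_k|(D)\le 2M$, testing against $\phi$ with $\|\phi\|_\infty\le 1$ shows $\cl v_0\in\mathcal{M}(D)$ (so $v_0\in V$) and $|\cl v_0|(D)\le\liminf_k|\cl v_k|(D)$.

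It then remains to check that each term of \eqref{G0} is weakly lower semicontinuous. The term $\|v-\hat A\|_{L^2(D)}^2$ is lsc because $v_k-\hat A_k\rightharpoonup v_0-\hat A_0$ in $L^2(D)$; the magnetic term is lsc because $\nabla\times\vec A_k-h_0\vec e_3\rightharpoonup\nabla\times\vec A_0-h_0\vec e_3$ in $L^2(\mb R^3)$; and the total-variation term was handled above. Adding the three estimates and using $\liminf a_k+\liminf b_k\le\liminf(a_k+b_k)$ yields $\mathcal{G}_{h_0}(v_0,\vec A_0)\le\liminf_k\mathcal{G}_{h_0}(v_k,\vec A_k)$, so $(v_0,\vec A_0)\in V\times K_0$ is a minimizer.

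For the equality of the minima, $K_0\subset E_0$ gives $\min_{V\times E_0}\mathcal{G}_{h_0}\le\min_{V\times K_0}\mathcal{G}_{h_0}$ at once. For the reverse, I would use the invariance of $\mathcal{G}_{h_0}$ under the simultaneous change $(v,\vec A)\mapsto(v+\hat\nabla g,\vec A+\nabla g)$: the field $\nabla\times\vec A$ is unchanged, the difference $v-\hat A$ is unchanged, and $\cl(v+\hat\nabla g)=\cl v$, so all three terms are invariant, while $v+\hat\nabla g\in V$ since $\hat\nabla g\in L^2(D;\mb R^2)$ for $g\in H^2_{loc}$. Given $(v,\vec A)\in V\times E_0$, the gauge-fixing result of Bauman--Ko \cite{BK} (in the form applicable to $E_0$ and $K_0$, obtained by replacing $h_{ex}$ with $h_0$ in \eqref{gauge}) produces $g$ with $\vec A+\nabla g\in K_0$; hence $\mathcal{G}_{h_0}(v+\hat\nabla g,\vec A+\nabla g)=\mathcal{G}_{h_0}(v,\vec A)$ with $(v+\hat\nabla g,\vec A+\nabla g)\in V\times K_0$, giving $\min_{V\times E_0}\mathcal{G}_{h_0}\ge\min_{V\times K_0}\mathcal{G}_{h_0}$. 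The main obstacle is precisely the coercivity of the magnetic part: it holds only in the Coulomb gauge via \eqref{H11}--\eqref{H12}, which is what forces the two-step structure of minimizing over $K_0$ first and then removing the gauge constraint.
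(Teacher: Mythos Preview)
Your proposal is correct and follows essentially the same route as the paper: gauge-invariance reduces the problem to $V\times K_0$, where the Coulomb condition and \eqref{H11}--\eqref{H12} furnish coercivity, and then the direct method with weak lower semicontinuity finishes. The only cosmetic difference is that the paper upgrades the weak convergence of $\vec A_k$ to strong convergence in $L^q(D)$ for $q<6$ via compact Sobolev embedding before bounding $\hat A_k$ in $L^2(D)$, whereas you work directly with weak $L^6(D)\hookrightarrow L^2(D)$ convergence; either suffices for the lsc step.
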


\begin{proof}
	Given $(v,\vec A)\in V\times E_0$, using a similar argument as in Lemma 2.1 of \cite{BK}, there exists a function $g\in H^2_{loc}(\mb R^3)$ such that $\vec A + \nabla g\in K_0$. By simple calculations, one can check that
	\begin{equation*}
	\mathcal{G}_{h_0}(v+\hat{\nabla}g,\vec A + \nabla g) = \mathcal{G}_{h_0}(v, \vec A),
	\end{equation*}
	where recall that $\hat{\nabla}$ is the gradient operator with respect to $x_1$ and $x_2$. Therefore, it suffices to show that the minimum of $\mathcal{G}_{h_0}$ is achieved by some $(v_0,\vec A_0)\in V\times K_0$.
	
	Let $\{(v_k, \vec A_k)\} \subset V \times K_0$ be a minimizing sequence of $\mathcal{G}_{h_0}$. By the definition of the space $K_0$ in \eqref{K0}, we have $\nabla\cdot \vec A_k=0$. Recall that \blue{$\vec a(x)$ is a smooth vector field on $\mathbb{R}^3$ such that $\nabla\cdot \vec a = 0$ and $\nabla\times\vec a=\vec{e}_3$}. Therefore, we have $\nabla\cdot(\vec A_k-h_0 \vec a) = 0$. By \eqref{H12}, we have
	\begin{equation*}
	\lVert\vec A_k - h_0 \vec a\rVert_{\check H^1(\mb R^3;\mb R^3)}^2 = \int_{\mathbb R^3} \left | \nabla\times\vec A_k - h_0 \vec e_3 \right |^2 dx.
	\end{equation*}
	Since $\{\mathcal{G}_{h_0}(v_k, \vec A_k)\}$ is a bounded sequence, it follows that $\{\vec A_k - h_0 \vec a\}$ forms a bounded sequence in $\check H^1(\mb R^3;\mb R^3)$, and, by \eqref{H11}, in $L^6(\mb R^3;\mb R^3)$. Consequently, there exists $\vec A_0$ such that $\vec A_0-h_0 \vec a\in\check H^1\cap L^6$ and, upon extraction, 
	\begin{equation}\label{prop4.1.1}
	\vec A_k - h_0 \vec a \rightharpoonup \vec A_0 - h_0 \vec a \quad\text{ in } \check H^1(\mb R^3;\mb R^3) \text{ and in } L^6(\mb R^3;\mb R^3).
	\end{equation}
	(Here we do not distinguish between the original sequence and its convergent subsequences.) It is then clear that $\nabla\cdot(\vec A_0-h_0\vec a)=0$. Therefore we have $\vec A_0\in K_0$. \blue{It follows from the boundedness of $\{\hat{A}_k\}$ in $L^6(D;\R^2)$ and (\ref{prop4.1.1})} that $\{\hat A_k\}$ is bounded in $L^2(D;\mb R^2)$ \blue{and}
	\begin{equation}
	\hat{A}_k \rightharpoonup \hat{A}_0 \quad \text{ in } L^2(D;\R^2).
	\end{equation}
	This along with the boundedness of $\{v_k-\hat A_k\}$ in $L^2(D;\mb R^2)$ implies that $\{v_k\}$ is bounded in $L^2(D;\mb R^2)$. So there exists $v_0 \in L^2(D;\mb R^2)$ such that, upon extraction,
	\begin{equation}\label{prop4.1.2}
	v_k \rightharpoonup v_0 \quad \text{ in } L^2(D;\mb R^2). 
	\end{equation}
	Now it only remains to show that $\text{curl} v_0$ is a finite Radon measure. To this end, we take a test function $\varphi \in C^1_c(D)$. It follows from \eqref{prop4.1.2} and an integration by parts that
	\begin{equation*}
	-\int_D v_0 \cdot\hat\nabla^{\perp}\varphi dx = -\lim_{k\rightarrow\infty} \int_D v_k \cdot\hat\nabla^{\perp}\varphi dx = \lim_{k\rightarrow\infty}\int_D \varphi d\cl v_k,
	\end{equation*}
	where $\hat{\nabla}^{\perp}$ denotes the operator $(-\partial_2,\partial_1)$. Since $\{\text{curl}v_k\}$ is bounded in $\mathcal M(D)$, we have 
	\begin{equation*}
	\int_D \varphi d\cl v_k\leq C\sup|\varphi|
	\end{equation*}
	for some constant $C$ independent of $k$ and $\varphi$. Therefore we deduce that 
	\begin{equation*}
	\int_{D}\varphi d\cl v_0 = -\int_D v_0 \cdot\hat\nabla^{\perp}\varphi dx \leq C\sup|\varphi|
	\end{equation*}
	for all $\varphi \in C^1_c(D)$, which implies $\text{curl}v_0 \in \mathcal M(D)$. By \eqref{prop4.1.2}, we have
	\begin{equation}
	\label{prop4.1.6}
	\text{curl}v_k \rightharpoonup \text{curl}v_0 \quad \text{ in } (C_c(D))^*.
	\end{equation}
	Putting \eqref{prop4.1.1}-\eqref{prop4.1.6} together, and using lower semicontinuity, we conclude that
	\begin{equation*}
	\mathcal{G}_{h_0}(v_0, \vec A_0) \leq \liminf \mathcal{G}_{h_0}(v_k, \vec A_k)=\inf_{V\times K_0} \mathcal{G}_{h_0}. 
	\end{equation*}
	Hence, $(v_0, \vec A_0)\in V\times K_0$ is a minimizer of $\mathcal{G}_{h_0}$.
\end{proof}

\blue{The following result will be used repeatedly.}

\begin{lemma}\label{l2.2}
	\blue{For every $\vec{B}\in H^1_{loc}(\mathbb{R}^3;\mathbb{R}^3)$, we have
		\begin{equation}\label{l2.2.0}
		\sum_{n=0}^{N-1}\int_{ns}^{(n+1)s}\int_{\Omega}\lt|\vec{B}-\vec{B}_n\rt|^2 d\hat x dx_3 \leq s^2 \int_{D}\lt|\nabla\vec{B}\rt|^2 dx,
		\end{equation}
		where $\vec{B}_n$ is the trace of $\vec{B}$ on $\Omega_n:=\Omega\times\{ns\}$. }
\end{lemma}

\begin{proof}
	First assume that $\vec{B}\in C^{\infty}(\mathbb{R}^3;\mathbb{R}^3)\cap H^1_{loc}(\mathbb{R}^3;\mathbb{R}^3)$. Using H\"{o}lder's inequality, we have
	\begin{equation*}
	\begin{split}
	&\int_{ns}^{(n+1)s}\int_{\Omega}\lt|\vec{B}(x)-\vec{B}_n(\hat x)\rt|^2 d\hat x dx_3 = \int_{ns}^{(n+1)s}\int_{\Omega}\lt|\int_{ns}^{x_3}\partial_3\vec{B}(\hat x,t) dt\rt|^2 d\hat x dx_3\\
	&\quad\quad\quad\quad\quad\quad\quad\quad\quad\leq \int_{ns}^{(n+1)s}\int_{\Omega}\lt(\int_{ns}^{(n+1)s}\lt|\partial_3\vec{B}(\hat x,t)\rt| dt\rt)^2 d\hat x dx_3\\
	&\quad\quad\quad\quad\quad\quad\quad\quad\quad\leq s\int_{ns}^{(n+1)s}\int_{\Omega}\lt(\int_{ns}^{(n+1)s}\lt|\partial_3\vec{B}(\hat x,t)\rt|^2 dt\rt) d\hat x dx_3\\
	&\quad\quad\quad\quad\quad\quad\quad\quad\quad = s^2 \int_{ns}^{(n+1)s}\int_{\Omega}\lt|\partial_3\vec{B}(\hat x,t)\rt|^2  d\hat x dt.
	\end{split}
	\end{equation*}
	It follows that
	\begin{equation}\label{l2.2.1}
	\begin{split}
	&\sum_{n=0}^{N-1}\int_{ns}^{(n+1)s}\int_{\Omega}\lt|\vec{B}-\vec{B}_n\rt|^2 d\hat x dx_3 \\
	&\quad\quad\quad\leq s^2 \sum_{n=0}^{N-1} \int_{ns}^{(n+1)s}\int_{\Omega}\lt|\partial_3\vec{B}(\hat x,t)\rt|^2  d\hat x dt\leq s^2 \int_{D}\lt|\nabla\vec{B}\rt|^2 dx.
	\end{split}
	\end{equation}
	
	Now assume that $\vec{B}\in H^1_{loc}(\mathbb{R}^3;\mathbb{R}^3)$. Let $\{\vec{B}^k\}\subset C^{\infty}(\mathbb{R}^3;\mathbb{R}^3)\cap H^1_{loc}(\mathbb{R}^3;\mathbb{R}^3)$ be a sequence such that $\vec{B}^k\rightarrow \vec{B}$ in $H^1_{loc}(\mathbb{R}^3;\mathbb{R}^3)$. Using Young's inequality yields
	\begin{equation}\label{l2.2.3}
	\begin{split}
	&\sum_{n=0}^{N-1}\int_{ns}^{(n+1)s}\int_{\Omega}\lt|\vec{B}-\vec{B}_n\rt|^2 d\hat x dx_3\\
	&\quad\quad\leq \sum_{n=0}^{N-1}\int_{ns}^{(n+1)s}\int_{\Omega}2\lt(1+\frac{1}{\sigma}\rt)\lt(\lt|\vec{B}-\vec{B}^k\rt|^2+\lt|\vec{B}^k_n-\vec{B}_n\rt|^2\rt)d\hat x dx_3\\
	& \quad\quad\quad+\sum_{n=0}^{N-1}\int_{ns}^{(n+1)s}\lt(1+\sigma\rt)\lt|\vec{B}^k-\vec{B}^k_n\rt|^2 d\hat x dx_3
	\end{split}
	\end{equation}
	for all $\sigma>0$. Since $\vec{B}^k\rightarrow \vec{B}$ in $H^1_{loc}(\mathbb{R}^3;\mathbb{R}^3)$, we have
	\begin{equation}\label{l2.2.4}
	\sum_{n=0}^{N-1}\int_{ns}^{(n+1)s}\int_{\Omega}\lt|\vec{B}-\vec{B}^k\rt|^2 d\hat x dx_3 = \int_{D}\lt|\vec{B}-\vec{B}^k\rt|^2dx\overset{k\rightarrow\infty}{\longrightarrow} 0.
	\end{equation}
	Using (\ref{l2.2.1}) we have
	\begin{equation}\label{l2.2.5}
	\sum_{n=0}^{N-1}\int_{ns}^{(n+1)s}\int_{\Omega}\lt|\vec{B}^k-\vec{B}^k_n\rt|^2 d\hat x dx_3 \leq s^2 \int_{D}\lt|\nabla\vec{B}^k\rt|^2 dx\overset{k\rightarrow\infty}{\longrightarrow} s^2 \int_{D}\lt|\nabla\vec{B}\rt|^2 dx.
	\end{equation}
	For each $n$, we identify $\Omega_n$ as a flat portion of some bounded smooth domain $\omega_n\subset \mathbb{R}^3$. By employing the trace theorem, we have
	\begin{equation}\label{l2.2.2}
	\lVert \vec{B}^k_n - \vec{B}_n \rVert_{L^2(\Omega_n)} \leq C \lVert \vec{B}^k - \vec{B} \rVert_{H^1(\omega_n)}
	\end{equation}
	for some constant $C$ depending only on $\omega_n$. One can choose $\omega_n$ to be $\omega_0+(0,0,ns)$, i.e., vertical translation of $\omega_0$ by $ns$, so that the constant $C$ in (\ref{l2.2.2}) is independent of $n$. Therefore, it follows from (\ref{l2.2.2}) that
	\begin{equation*}
	\begin{split}
	&\sum_{n=0}^{N-1}\int_{ns}^{(n+1)s}\int_{\Omega}\lt|\vec{B}^k_n-\vec{B}_n\rt|^2 d\hat x dx_3 \\
	&\quad\quad= s\sum_{n=0}^{N-1}\lVert \vec{B}^k_n - \vec{B}_n \rVert_{L^2(\Omega_n)}^2\leq sC\sum_{n=0}^{N-1}\lVert \vec{B}^k - \vec{B} \rVert_{H^1(\omega_n)}^2.
	\end{split}
	\end{equation*}
	Now we fix some bounded domain $\omega\subset\mathbb{R}^3$ sufficiently large such that $\cup_n \omega_n\subset \omega$. Note that $sN=L$ is the height of the cylinder $D$, which is fixed. Therefore,
	\begin{equation}\label{l2.2.6}
	\sum_{n=0}^{N-1}\int_{ns}^{(n+1)s}\int_{\Omega}\lt|\vec{B}^k_n-\vec{B}_n\rt|^2 d\hat x dx_3 \leq sN C\lVert \vec{B}^k - \vec{B} \rVert_{H^1(\omega)}^2\overset{k\rightarrow\infty}{\longrightarrow} 0.
	\end{equation}
	Finally, by putting (\ref{l2.2.4}), (\ref{l2.2.5}) and (\ref{l2.2.6}) into (\ref{l2.2.3}), and first letting $k\rightarrow\infty$ and then letting $\sigma\rightarrow 0$, we immediately obtain (\ref{l2.2.0}).
\end{proof}

Our next result concerns the density of $C^{\infty}(\overline D;\mb R^2)$ in the space $V$ (defined in \eqref{V}) with respect to a norm that is similar to that on the space of BV functions. More precisely, we have

\begin{proposition}\label{prop2.2}
	Assume $v\in V$. There exists a sequence $\{v_k\} \subset V\cap C^{\infty}(\overline D;\mb R^2)$ such that
	\begin{equation}\label{prop2.2.1}
	v_k\rightarrow v \quad\text{ in } L^2(D;\mathbb R^2),
	\end{equation}
	and 
	\begin{equation}\label{prop2.2.2}
	|\text{curl}v_k|(D)\rightarrow |\text{curl}v|(D)
	\end{equation}
	as $k\rightarrow \infty$.
\end{proposition}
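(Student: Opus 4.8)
The plan is to mimic the classical proof that smooth maps are strictly dense in $BV$, exploiting the fact that here only the single scalar combination $\cl v=\partial_1 v_2-\partial_2 v_1$ of derivatives is assumed to be a measure. First I would observe that the lower bound is free: since $v_k\to v$ in $L^2(D)$ forces $\cl v_k\to\cl v$ in the sense of distributions, lower semicontinuity of the total variation gives $|\cl v|(D)\le\liminf_k|\cl v_k|(D)$ automatically. Hence it suffices to produce, for each $\varepsilon>0$, a field $v_\varepsilon\in V\cap C^\infty(\overline D;\mb R^2)$ with $\lVert v_\varepsilon-v\rVert_{L^2(D)}<\varepsilon$ and $|\cl v_\varepsilon|(D)\le|\cl v|(D)+\varepsilon$; a diagonal sequence then yields \eqref{prop2.2.1}--\eqref{prop2.2.2}.

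For the construction I would use mollification. Because $\cl$ is a constant-coefficient first-order operator in the horizontal variables only, convolution commutes with it: writing $v_\delta:=\rho_\delta*v$ for a standard mollifier $\rho_\delta$, one has $\cl v_\delta=\rho_\delta*\cl v$, and since $\cl v$ is a finite measure, $\cl v_\delta$ is a smooth $L^1$ function with $\int|\cl v_\delta|\le|\cl v|$ evaluated on a $\delta$-neighborhood of its support. On interior subdomains this already gives $v_\delta\to v$ in $L^2_{loc}$ together with $\int_{D'}|\cl v_\delta|\to|\cl v|(D')$.

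To handle $\partial D$ I would run the Anzellotti--Giaquinta partition-of-unity scheme: cover $\overline D$ by a locally finite family $\{U_i\}$ that becomes thin near $\partial D$, take a subordinate partition of unity $\{\varphi_i\}$, and set $v_\varepsilon=\sum_i\rho_{\delta_i}*(\varphi_i v)$, where each radius $\delta_i$ (together with an inward shift near the boundary) is chosen small enough that $\rho_{\delta_i}*(\varphi_i v)$ is supported in $U_i\cap D$ and both $\lVert\rho_{\delta_i}*(\varphi_i v)-\varphi_i v\rVert_{L^2}$ and $\lVert\rho_{\delta_i}*(\hat\nabla^{\perp}\varphi_i\cdot v)-\hat\nabla^{\perp}\varphi_i\cdot v\rVert_{L^1}$ are $<\varepsilon 2^{-i}$. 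The key algebraic identity is $\cl(\varphi_i v)=\varphi_i\,\cl v+\hat\nabla^{\perp}\varphi_i\cdot v$, where the commutator term involves only the $L^2$ (hence $L^1_{loc}$) field $v$ and not the measure. Summing over $i$ and using $\sum_i\hat\nabla^{\perp}\varphi_i=0$, the commutator contributions telescope into something of $L^1$-norm $<\varepsilon$, while the main terms contribute $\sum_i\int\varphi_i\,d|\cl v|=|\cl v|(D)$. This yields $v_\varepsilon\to v$ in $L^2$ with $|\cl v_\varepsilon|(D)\le|\cl v|(D)+\varepsilon$ and $v_\varepsilon$ smooth in the interior.

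Finally, to upgrade interior smoothness to $C^\infty(\overline D)$ I would exploit the cylinder structure $D=\Omega\times(0,L)$. Across the flat faces $\{x_3=0\}$ and $\{x_3=L\}$ I can extend $v$ by even reflection in $x_3$; since $\cl$ contains no $x_3$-derivative, this adds no curl mass on the reflection planes, so mollifying the reflected field produces functions smooth up to the top and bottom. For the lateral boundary $\partial\Omega\times(0,L)$, using the smoothness of $\partial\Omega$ I would locally flatten the boundary by a diffeomorphism close to the identity and mollify with an inward translation, so the convolution samples only interior values and the output extends smoothly to $\overline D$. \textbf{The main obstacle is precisely this lateral step}: unlike the flat faces, a naive reflection across the curved $\partial\Omega$ would create spurious curl mass concentrated on the boundary, so the inward-shift mollification must be calibrated so that (i) the output is genuinely $C^\infty$ on $\overline D$, (ii) it is $L^2$-close to $v$, and (iii) its curl total variation does not exceed $|\cl v|(D)+\varepsilon$; controlling (iii) relies on the flattening diffeomorphism distorting $|\cl v|$ only by a factor tending to $1$ and on the boundary-cutoff commutator errors remaining negligible in $L^1$.
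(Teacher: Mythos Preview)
Your proposal is correct, and your interior step---the Anzellotti--Giaquinta partition-of-unity scheme with the cancellation $\sum_i\hat\nabla^\perp\varphi_i=0$ that kills the commutator contributions---is exactly the paper's Lemma~A.1. The difference lies in how you upgrade to $C^\infty(\overline D)$: the paper (Lemma~A.2) flattens the lateral boundary and \emph{reflects} $v$ across it as a $1$-form, namely $(v^1,v^2)(x_1,x_2,x_3)\mapsto(v^1,-v^2)(x_1,-x_2,x_3)$, building an extension $T:V(D)\to V(\mb R^3)$ and proving via a tangential-trace argument that $|\cl Tv|(\partial D)=0$; it then mollifies the extension on a neighborhood of $\overline D$. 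You instead flatten and \emph{shift inward} before mollifying, never leaving $D$. Both are standard BV manoeuvres and both succeed here; your route avoids the trace construction, while the paper's route avoids worrying about how the inward shift interacts with the partition of unity. Your handling of the flat faces $\{x_3=0,L\}$ by even reflection in $x_3$ is fine; the paper treats these implicitly, since $\cl$ contains no $\partial_3$ and zero-extension in $x_3$ already carries no curl mass there. One point worth sharpening in either approach: under the flattening diffeomorphism, $v$ should be pulled back as a $1$-form so that $\cl v$ transforms as a top-degree form with diffeomorphism-invariant total mass. Your phrase ``distorting $|\cl v|$ only by a factor tending to $1$'' hints at pulling back componentwise, which would illegitimately introduce first derivatives of $v$ other than $\cl v$---and those are not assumed to be measures.
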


The above proposition is needed in Section 4. Such results might be well-known to experts. However, the author did not find a proof in the literature. For the sake of completion, we include a proof in the appendix. Essentially, the space $V$ carries a structure that is analogous to that on the space BV. We adapt standard approximation and extension techniques for BV functions to prove Proposition \ref{prop2.2}.

\section{Compactness and lower bound}

In this section, we prove the compactness and lower bound estimates in Theorem \ref{T1}. By multiplying out the term $\hat{\nabla}_{\hat A_{n}}u_n=\hat{\nabla} u_n-\imath \hat A_n u_n$, we write
\begin{equation}\label{splitting}
\begin{split}
\mathcal{G}_{LD}^{\epsilon, s}&(\{u_n\}_{n=0}^N, \vec{A})\\
&=s\sum^N_{n=0}E_{\epsilon}(u_n)-s\sum^N_{n=0} \int_\Omega(\hat{\nabla}u_n,\imath u_n)\cdot\hat{A}_nd\hat x+\frac{s}{2}\sum^N_{n=0} \int_\Omega|u_n|^2|\hat{A}_n|^2d\hat x\\
&+s\sum^{N-1}_{n=0}\int_\Omega\frac{1}{2\lambda^{2}s^2}\left| u_{n+1}-u_{n}e^{\imath \int_{ns}^{(n+1)s}A^{3}dx_{3}}\right|^{2}d\hat x\\
&+\frac{1}{2}\int_{\mathbb{R}^3}\left|\nabla\times{\vec{A}}-h_{ex}\vec{e}_3\right|^{2}dx,
\end{split}
\end{equation}
where we denote
\begin{equation}\label{E_epsilon}
E_{\epsilon}(u)=\frac{1}{2}\int_{\Omega}\left[ |\hat\nabla u|^2+\frac{(1-|u|^2)^2}{2\epsilon^2}\right] d\hat x,
\end{equation}
the simplified two-dimensional Ginzburg-Landau energy without magnetic field. A similar decomposition as in \eqref{splitting} was used for the Ginzburg-Landau energy. (See \cite{JS} and \cite{BJOS1}.) The decomposition \eqref{splitting} allows us to separate the energy of the magnetic terms from the two-dimensional Ginzburg-Landau energies on the layers. 

\subsection{The Jacobian estimate}

For the two-dimensional Ginzburg-Landau energies on the layers, a key step in the analysis is to separate the energy from the Jacobian (defined in \eqref{ja}) by showing that its energy is concentrated in small regions with total measure tending to zero. Such Jacobian estimates were proved for the Ginzburg-Landau energy in \cite{JS1} and \cite{SS2} (see also \cite{ABO}). The proof requires suitable upper bound on the two-dimensional Ginzburg-Landau energy. In Lemma \ref{lem2.2}, using the upper bound on the Lawrence-Doniach energy (\ref{thm1.5}) and Lemma \ref{l2.2}, we are able to show that the desired upper bound on the two-dimensional Ginzburg-Landau energy holds on most of the layers. Then following a standard slicing argument that was used in \cite{JS1} and \cite{SS2}, we are able to prove a Jacobian estimate for the Lawrence-Doniach energy (see \eqref{lem2.3.3}). The main result of this section is the following

\begin{theorem}\label{lem2.3}
	Assume $\lim_{\epsilon\rightarrow 0}\frac{h_{ex}}{|\ln\epsilon|}=h_0$ for some $0\leq h_0<\infty$. For any sequence $(\{u^{\epsilon}_n\},\vec{A}^{\epsilon,s})\in [H^1(\Omega;\mathbb{C})]^{N+1}\times K$ such that
	\begin{equation}\label{2.1}
	\mathcal{G}_{LD}^{\epsilon,s}(\{u_n^{\eps}\},\vec{A}^{\epsilon,s}) \leq C_0|\ln\epsilon|^2,
	\end{equation}
	where $C_0$ is a constant independent of $\epsilon$ and $s$, we have, up to a subsequence as $(\epsilon,s) \rightarrow (0,0)$,
	\begin{equation}\label{lem2.3.1}
	\sum\limits_{n=0}^{N-1}\frac{j(u_n^{\eps})}{|u_n^{\eps}||\ln\eps|}\chi_n \rightharpoonup v \text{ in } L^2(D;\mathbb{R}^2), \quad \frac{j^{\epsilon,s}}{|\ln\epsilon|}\rightharpoonup v \text{ in } L^{\frac{4}{3}}(D;\mathbb{R}^2),
	\end{equation}
	and
	\begin{equation}\label{lem2.3.2}
	\frac{J^{\epsilon,s}}{|\ln\epsilon|}\rightharpoonup w \text{ in } (C^{0,\alpha}_c(D))^{*} \text{ for all } 0<\alpha\leq 1
	\end{equation}
	for some $v \in L^2(D;\mathbb{R}^2)$ such that $w = \frac{1}{2} \mathrm{curl} v$ is a Radon measure. Moreover, there exist sets $Z_n^{\epsilon}\subset\Omega$ with $\lim s\sum_{n=0}^N|Z_n^{\epsilon}|=0$ such that
	\begin{equation}\label{lem2.3.3}
	\liminf_{(\epsilon,s)\rightarrow(0,0)}\frac{s}{2|\ln\epsilon|^2}\sum_{n=0}^N\int_{Z_n^{\epsilon}}|\hat{\nabla}u_n^{\epsilon}|^2d\hat x\geq|w|(D).
	\end{equation}
\end{theorem}

Note that Theorem \ref{lem2.3} does not rely on the assumption (\ref{h}). First we prove some auxiliary lemmas.

\begin{lemma}\label{l3.3}
	Under the assumptions of Theorem \ref{lem2.3}, for any bounded domain $\omega\subset\mathbb{R}^3$, we have
	\begin{equation*}
	\lVert \vec{A}^{\epsilon,s} \rVert_{H^1(\omega;\mathbb{R}^3)}^2\leq C|\ln\eps|^2
	\end{equation*}
	for some constant $C$ independent of $\epsilon$ and $s$.
\end{lemma}

\begin{proof}
	Using the assumption $\lim_{\epsilon\rightarrow 0}\frac{h_{ex}}{|\ln\epsilon|}=h_0<\infty$ and the fact that $\vec a$ is a fixed smooth vector field on $\mathbb{R}^3$, we have
	\begin{equation*}
	\begin{split}
	\int_{\omega}\lt|\nabla\vec{A}^{\epsilon,s}\rt|^2 dx &\leq 2\int_{\omega}\lt|\nabla\lt(\vec A^{\epsilon,s}-h_{ex}\vec a\rt)\rt|^2 dx + 2\int_{\omega}h_{ex}^2\lt|\nabla\vec a\rt|^2 dx \\
	&\leq 2\int_{\mathbb{R}^3}\lt|\nabla\lt(\vec A^{\epsilon,s}-h_{ex}\vec a\rt)\rt|^2 dx + C|\ln\eps|^2.
	\end{split}
	\end{equation*}
	Now using \eqref{H12} and \eqref{2.1} we have
	\begin{equation}\label{l3.3.1}
	\begin{split}
	2\int_{\mathbb{R}^3}\lt|\nabla\lt(\vec A^{\epsilon,s}-h_{ex}\vec a\rt)\rt|^2 dx&=2 \int_{\mathbb{R}^3}\lt|\nabla\times\lt(\vec A^{\epsilon,s}-h_{ex}\vec a\rt)\rt|^2 dx\\
	&\leq 4\mathcal{G}_{LD}^{\epsilon,s}(\{u_n^{\eps}\},\vec{A}^{\epsilon,s})\leq 4C_0|\ln\eps|^2.
	\end{split}
	\end{equation}
	Therefore we have
	\begin{equation}\label{l3.2.1}
	\int_{\omega}\lt|\nabla\vec{A}^{\epsilon,s}\rt|^2 dx \leq C|\ln\epsilon|^2.
	\end{equation}
	
	Using H\"{o}lder's inequality, \eqref{H11} and (\ref{l3.3.1}) we deduce that
	\begin{equation*}
	\lVert \vec A^{\epsilon,s}-h_{ex}\vec a\rVert_{L^2(\omega;\mathbb{R}^3)}^2 \leq C \lVert \vec A^{\epsilon,s}-h_{ex}\vec a\rVert_{L^6(\omega;\mathbb{R}^3)}^2 \leq 4C\lVert \vec A^{\epsilon,s}-h_{ex}\vec a\rVert_{\check H^1(\mb R^3;\mb R^3)}^2 \leq \tilde{C}|\ln\eps|^2.
	\end{equation*}
	Therefore we have
	\begin{equation}\label{l3.3.2}
	\int_{\omega}\lt|\vec A^{\epsilon,s}\rt|^2 dx \leq 2\int_{\omega}\lt|\vec A^{\epsilon,s}-h_{ex}\vec a\rt|^2 dx + 2\int_{\omega}\lt|h_{ex}\vec a\rt|^2 dx \leq C|\ln\eps|^2.
	\end{equation}
	Putting (\ref{l3.2.1}) and (\ref{l3.3.2}) together concludes the proof of the lemma.
\end{proof}

\begin{lemma}\label{l3.2}
	Under the assumptions of Theorem \ref{lem2.3}, we have
	\begin{equation}\label{2.2.5}
	s\sum^{N-1}_{n=0}\int_\Omega |\hat{A}^{\epsilon,s}_n|^2 d\hat x \leq C|\ln\eps|^2
	\end{equation}
	and
	\begin{equation}\label{2.2.6}
	s\sum^{N-1}_{n=0}\int_\Omega |\hat{A}^{\epsilon,s}_n|^4 d\hat x \leq C|\ln\eps|^4
	\end{equation}
	for constants $C$ independent of $\epsilon$ and $s$.
\end{lemma}

\begin{proof}
	First note that
	\begin{equation}\label{226}
	s\sum^{N-1}_{n=0}\int_\Omega |\hat{A}^{\epsilon,s}_n|^2 d\hat x \leq 2\sum_{n=0}^{N-1}\int_{ns}^{(n+1)s}\int_{\Omega}\left(|\hat A^{\epsilon,s}-\hat A^{\epsilon,s}_n|^2+|\hat A^{\epsilon,s}|^2\right) d\hat x dx_3.
	\end{equation}
	It follows from Lemmas \ref{l2.2} and \ref{l3.3} that
	\begin{equation}\label{2.2.7}
	\sum_{n=0}^{N-1}\int_{ns}^{(n+1)s}\int_{\Omega} |\hat A^{\epsilon,s}-\hat A^{\epsilon,s}_n|^2 d\hat x dx_3 \leq s^2\int_{D}\lt|\nabla\vec{A}^{\epsilon,s}\rt|^2dx\leq Cs^2 |\ln\eps|^2.
	\end{equation}
	Hence, \eqref{2.2.5} follows from \eqref{226}, (\ref{2.2.7}) and (\ref{l3.3.2}).
	
	To show (\ref{2.2.6}), one can use the Sobolev embedding theorem and the trace theorem in a similar way as in the proof of Lemma \ref{l2.2}, in particular, as in (\ref{l2.2.2}) and (\ref{l2.2.6}), to conclude that
	\begin{equation*}
	\lVert \hat{A}^{\epsilon,s}_n \rVert_{L^4(\Omega_n)}\leq C\lVert \hat{A}^{\epsilon,s}_n \rVert_{H^{\frac{1}{2}}(\Omega_n)} \leq C\lVert \hat{A}^{\epsilon,s} \rVert_{H^{1}(\omega)}
	\end{equation*}
	for some sufficiently large bounded domain $\omega\subset \mathbb{R}^3$ and some constant $C$ independent of $\eps$, $s$ and $n$. Hence, noting that $sN=L$ is fixed, we have
	\begin{equation}\label{l3.2.3}
	s\sum^{N-1}_{n=0}\int_\Omega |\hat{A}^{\epsilon,s}_n|^4 d\hat x\leq sN C\lVert \hat{A}^{\epsilon,s} \rVert_{H^{1}(\omega)}^4 = LC\lVert \hat{A}^{\epsilon,s} \rVert_{H^{1}(\omega)}^4.
	\end{equation}
	Finally, (\ref{2.2.6}) follows from (\ref{l3.2.3}) and Lemma \ref{l3.3}.
\end{proof}

\begin{lemma}\label{lem3.4}
	Under the assumptions of Theorem \ref{lem2.3}, we have
	\begin{equation}\label{lem3.4.1}
	s\sum^N_{n=0}\int_\Omega \lt(|u_n^{\eps}|^2-1\rt)|\hat{A}^{\epsilon,s}_n|^2 d\hat x\rightarrow 0
	\end{equation}
	and
	\begin{equation}\label{lem3.4.2}
	s\sum^N_{n=0}\int_\Omega \lt(|u_n^{\eps}|-1\rt)^2|\hat{A}^{\epsilon,s}_n|^2 d\hat x\rightarrow 0
	\end{equation}
	as $(\epsilon,s)\rightarrow(0,0)$.
\end{lemma}

\begin{proof}
	Using H\"{o}lder's and the Cauchy-Schwarz inequalities, we have
	\begin{equation}\label{lem2.2.1}
	\begin{split}
	&s\sum^N_{n=0}\int_\Omega \lt(|u_n^{\eps}|^2-1\rt)|\hat{A}^{\epsilon,s}_n|^2 d\hat x\\
	&\quad\quad\leq s\sum^N_{n=0} \lt(\int_{\Omega}\frac{\lt(|u_n^{\eps}|^2-1\rt)^2}{4\epsilon^2}d\hat x\rt)^{\frac{1}{2}}\lt(\int_{\Omega}4\epsilon^2|\hat{A}^{\epsilon,s}_n|^4d\hat x\rt)^{\frac{1}{2}}\\
	&\quad\quad\leq\lt(s\sum^N_{n=0}\int_{\Omega}\frac{\lt(|u_n^{\eps}|^2-1\rt)^2}{4\epsilon^2}d\hat x\rt)^{\frac{1}{2}}\lt(s\sum^N_{n=0}\int_{\Omega}4\epsilon^2|\hat{A}^{\epsilon,s}_n|^4d\hat x\rt)^{\frac{1}{2}}.
	\end{split}
	\end{equation}
	Using the energy bound (\ref{2.1}), we have
	\begin{equation}
	s\sum^N_{n=0}\int_{\Omega}\frac{\lt(|u_n^{\eps}|^2-1\rt)^2}{4\epsilon^2}d\hat x \leq C_0|\ln\eps|^2.
	\end{equation}
	Using (\ref{2.2.6}) we have
	\begin{equation}\label{lem2.2.2}
	4\epsilon^2s\sum^N_{n=0}\int_{\Omega}|\hat{A}^{\epsilon,s}_n|^4d\hat x \leq C\epsilon^2|\ln\eps|^4 \rightarrow 0 \quad\text{ as } (\epsilon,s)\rightarrow(0,0).
	\end{equation}
	Putting (\ref{lem2.2.1})-(\ref{lem2.2.2}) together we obtain (\ref{lem3.4.1}). The estimate (\ref{lem3.4.2}) follows from (\ref{lem3.4.1}) by noting that $\lt(|u_n^{\eps}|-1\rt)^4\leq \lt(|u_n^{\eps}|-1\rt)^2\lt(|u_n^{\eps}|+1\rt)^2=\lt(|u_n^{\eps}|^2-1\rt)^2$.
\end{proof}

\begin{lemma}\label{lem2.2}
	Under the assumptions of Theorem \ref{lem2.3}, we have
	\begin{equation*}
	s\sum^N_{n=0}E_{\epsilon}(u_n^{\epsilon})\leq C|\ln\epsilon|^2
	\end{equation*}
	for some constant $C$ independent of $\epsilon$ and $s$.
\end{lemma}

\begin{proof}
	Using the decomposition \eqref{splitting} and the upper bound \eqref{2.1}, we have
	\begin{equation}\label{2.2.1}
	s\sum^N_{n=0}E_{\epsilon}(u_n^{\epsilon})\leq C_0|\ln\epsilon|^2+s\sum^N_{n=0} \int_\Omega(\hat{\nabla}u_n^{\epsilon},\imath u_n^{\epsilon})\cdot\hat{A}^{\epsilon,s}_nd\hat x.
	\end{equation}
	Therefore we only need to estimate the term $s\sum^N_{n=0} \int_\Omega(\hat{\nabla}u_n^{\epsilon},\imath u_n^{\epsilon})\cdot\hat{A}^{\epsilon,s}_nd\hat x$. We follow an idea used in \cite{JS} and \cite{BJOS1}. First, by an elementary inequality, we have
	\begin{equation*}
	s\sum^N_{n=0} \int_\Omega(\hat{\nabla}u_n^{\epsilon},\imath u_n^{\epsilon})\cdot\hat{A}^{\epsilon,s}_nd\hat x\leq s\sum^N_{n=0}\int_\Omega\left(\frac{1}{4}|\hat{\nabla}u_n^{\epsilon}|^2+|u_n^{\eps}|^2|\hat{A}^{\epsilon,s}_n|^2\right)d\hat x.
	\end{equation*}
	Plugging the above into \eqref{2.2.1} and using the expression of $E_{\epsilon}$ in \eqref{E_epsilon}, we obtain
	\begin{equation*}
	\frac{s}{2}\sum^N_{n=0}E_{\epsilon}(u_n^{\epsilon})\leq C_0|\ln\epsilon|^2+s\sum^N_{n=0}\int_\Omega |u_n^{\eps}|^2|\hat{A}^{\epsilon,s}_n|^2 d\hat x.
	\end{equation*}
	It follows from (\ref{lem3.4.1}) and (\ref{2.2.5}) that
	\begin{equation*}
	s\sum^N_{n=0}\int_\Omega |u_n^{\eps}|^2|\hat{A}^{\epsilon,s}_n|^2 d\hat x = s\sum^N_{n=0}\int_\Omega \lt(|u_n^{\eps}|^2-1\rt)|\hat{A}^{\epsilon,s}_n|^2 d\hat x + s\sum^N_{n=0}\int_\Omega |\hat{A}^{\epsilon,s}_n|^2 d\hat x\leq C|\ln\eps|^2.
	\end{equation*}
	This concludes the proof of the lemma.
\end{proof}

The above Lemma \ref{lem2.2} enables us to prove a Jacobian estimate for the Lawrence-Doniach energy. The proof is very similar to that for the higher dimensional Ginzburg-Landau energy in \cite{JS1} and \cite{SS2}. Here we provide the proof for the convenience of the reader. We first recall a covering result from \cite{SS2} which is convenient for our purposes. As in \cite{SS2}, we choose a sequence $M(\epsilon)$ such that, for all $\alpha > 0$,
\begin{equation}\label{M}
\lim\limits_{\epsilon \rightarrow 0} \epsilon^{\alpha}M(\epsilon) = 0, \quad \lim\limits_{\epsilon\rightarrow 0}\frac{|\ln\epsilon|}{M(\epsilon)^{\alpha}} = 0,
\end{equation}
and 
$$\ln M(\epsilon) = o(|\ln\epsilon|) \text{ as } \epsilon \rightarrow 0.$$
Then we have the following proposition which is Proposition 4.2 in \cite{SS2}:

\begin{proposition}[Sandier-Serfaty \cite{SS2}]\label{prop3.3}
	If $u$ satisfies $E_{\epsilon}(u) < c_0M(\epsilon)$ for some constant $c_0$ and some $0<\epsilon<1$, then there exist disjoint balls $B^1, \dddot\ , B^{l}$ with $B^i = B(a^i, r^i)$ such that, denoting $\tilde \Omega = \{x\in\Omega: \text{dist}(x,\partial\Omega) > \epsilon \}$, we have
	\begin{enumerate}
		\item \begin{equation}\label{lem2.3.18}
		\sum_{i} r^i \leq \frac{1}{M(\epsilon)}.
		\end{equation}
		\item For any $x\in \tilde{\Omega}\setminus \cup_i B^i$, $\lt||u(x)| - 1\rt| \leq \frac{2}{M(\epsilon)}$.
		\item If $B^i\subset\tilde{\Omega}$, 
		\begin{equation}\label{lem2.3.12}
		\frac{1}{2}\int_{B^i}|\hat\nabla u|^2 \geq \pi|d^i||\ln\epsilon|(1-o_{\epsilon}(1)),
		\end{equation}
		where $d^i=\deg(u,\partial B^i)$, and $o_{\eps}(1)\rightarrow 0$ as $\eps\rightarrow 0$.
		\item Letting $\mu^{\epsilon}=\pi\sum_{\{i:a^i\in\tilde{\Omega}\}}d^i\delta_{a^i}$, we have 
		\begin{equation}\label{lem2.3.6}
		\parallel Ju-\mu^{\epsilon} \parallel_{(C^{0,1}_c)^*}\leq C\frac{E_{\epsilon}(u)}{M(\epsilon)}
		\end{equation}
		for some constant $C$ depending only on $c_0$.
	\end{enumerate}
\end{proposition}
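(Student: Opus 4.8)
The plan is to follow the standard vortex-ball construction for the two-dimensional Ginzburg-Landau energy, as developed by Sandier, Jerrard, and Jerrard-Soner, together with the associated Jacobian estimate. The four conclusions split into two stages: items (1)--(3) come from a ball-growth and merging procedure applied to the set where $|u|$ is far from $1$, while item (4) is the Jacobian compactness estimate obtained by comparing $Ju$ against the atomic measure $\mu^\epsilon$ inside and outside the constructed balls.

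First I would isolate the ``bad set''. Using the potential term in $E_\epsilon(u)$ together with the bound $E_\epsilon(u)<c_0 M(\epsilon)$, one controls the measure of $\{x:\,||u(x)|-1|\geq\tfrac{2}{M(\epsilon)}\}$ and covers it by balls of radius of order $\eps$. I would then run Sandier's ball-growth argument: grow the radii continuously, merge any balls that touch, and stop once the sum of radii first reaches $\tfrac{1}{M(\epsilon)}$; this immediately yields (1), and (2) holds because outside the final balls $u$ lies in the good region by construction. The degree lower bound (3) follows from the elementary annular estimate $\tfrac12\int_{B(a,R)\setminus B(a,r)}|\hat\nabla u|^2\geq\pi|d|\ln\tfrac{R}{r}\,(1-o(1))$ on annuli where $|u|\approx 1$ and $d=\deg(u,\partial B(a,r))$; accumulating these over the growth from the initial scale $\sim\eps$ to the final scale $\sim\tfrac{1}{M(\eps)}$ produces a factor $\ln\tfrac{1}{\eps M(\eps)}=|\ln\eps|-\ln M(\eps)$, and the scale-separation hypothesis $\ln M(\eps)=o(|\ln\eps|)$ in \eqref{M} converts this into the claimed $\pi|d^i||\ln\eps|(1-o_\eps(1))$.

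The main work, and the step I expect to be the chief obstacle, is the Jacobian estimate (4). Fix a test function $\zeta\in C^{0,1}_c(\Omega)$. Writing $j(u)=(\imath u,\hat\nabla u)$ and $Ju=\tfrac12\cl j(u)$, an integration by parts gives $\int_\Omega\zeta\,Ju=-\tfrac12\int_\Omega\hat\nabla^{\perp}\zeta\cdot j(u)$, and I would split this over $\cup_i B^i$ and its complement. Outside the balls $|u|\approx 1$, so writing $u=|u|e^{\imath\phi}$ gives $j(u)=|u|^2\hat\nabla\phi$ and hence $Ju=\tfrac12\cl\big((|u|^2-1)\hat\nabla\phi\big)$, whose contribution is controlled by $\|1-|u|^2\|_{L^2}\|\hat\nabla u\|_{L^2}$ through the potential and gradient energies. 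Inside each ball I would approximate $\int_{B^i}\zeta\,Ju$ by $\zeta(a^i)\int_{B^i}Ju=\tfrac12\zeta(a^i)\int_{\partial B^i}j(u)\cdot\tau$, which equals $\pi d^i\zeta(a^i)$ up to an error from $|u|^2-1$, while the replacement of $\zeta$ by $\zeta(a^i)$ costs at most $\|\hat\nabla\zeta\|_\infty r^i$ per ball. Summing the errors and using $\sum_i r^i\leq\tfrac{1}{M(\epsilon)}$ together with the energy bound absorbs everything into $C\,\|\zeta\|_{C^{0,1}}\,E_\epsilon(u)/M(\epsilon)$, which is exactly \eqref{lem2.3.6}.

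The delicate points requiring care are the bookkeeping for balls meeting $\partial\tilde\Omega$ (so that only atoms with $a^i\in\tilde\Omega$ enter $\mu^\epsilon$ and only balls with $B^i\subset\tilde\Omega$ are used in (3)), and ensuring that the various $o_\eps(1)$ errors from the annular lower bounds are uniform across the possibly many balls. Since this statement is the verbatim content of Proposition 4.2 of Sandier-Serfaty \cite{SS2}, in practice I would reduce the argument to citing their result, having first verified that the hypotheses on $M(\epsilon)$ in \eqref{M} are precisely those they require.
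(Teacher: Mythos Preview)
Your proposal is correct and matches the paper's treatment: the paper does not prove this proposition at all but simply states it as a direct quotation of Proposition~4.2 in Sandier--Serfaty~\cite{SS2}, exactly as you anticipate in your final sentence. The proof sketch you give (ball-growth construction for (1)--(3), Jacobian splitting for (4)) is the standard argument underlying that result, and your observation that the hypotheses on $M(\epsilon)$ in \eqref{M} are precisely those needed is the only thing to check.
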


Using the above proposition, we are able to prove Theorem \ref{lem2.3}, whose proof follows closely that of Proposition 4.3 in \cite{SS2}.

\begin{proof}[Proof of Theorem \ref{lem2.3}]
	First note that 
	\begin{equation}\label{lem2.3.4}
	|j(u_n^{\epsilon})| = |(\imath u_n^{\epsilon}, \hat\nabla u_n^{\epsilon})| \leq |u_n^{\eps}|\cdot|\hat\nabla u_n^{\epsilon}|
	\end{equation}
	for all $n=0,1,\dddot\ ,N$. We deduce from Lemma \ref{lem2.2} that $\lt \{\sum\frac{j(u_n^{\eps})}{|u_n^{\eps}||\ln\eps|}\chi_n \rt \}$ forms a bounded sequence in $L^2(D;\mathbb{R}^2)$. Therefore, there exists some $v\in L^2(D;\mathbb{R}^2)$ such that
	\begin{equation}\label{lem2.3.01}
	\sum\limits_{n=0}^{N-1}\frac{j(u_n^{\eps})}{|u_n^{\eps}||\ln\eps|}\chi_n \rightharpoonup v \text{ in } L^2(D;\mathbb{R}^2).
	\end{equation}
	We write
	\begin{equation}\label{lem2.3.02}
	\sum\limits_{n=0}^{N-1}\frac{j(u_n^{\eps})}{|\ln\eps|}\chi_n=\sum\limits_{n=0}^{N-1}\frac{j(u_n^{\eps})}{|u_n^{\eps}||\ln\eps|}\lt(|u_n^{\eps}|-1\rt)\chi_n + \sum\limits_{n=0}^{N-1}\frac{j(u_n^{\eps})}{|u_n^{\eps}||\ln\eps|}\chi_n.
	\end{equation}
	It is clear that $\lt(|u_n^{\eps}|-1\rt)^4\leq\lt(|u_n^{\eps}|-1\rt)^2\lt(|u_n^{\eps}|+1\rt)^2=\lt(|u_n^{\eps}|^2-1\rt)^2$. Therefore, using Lemma \ref{lem2.2}, we obtain
	\begin{equation}\label{lem2.3.03}
	s\sum_{n=0}^{N-1}\int_{\Omega}\lt(|u_n^{\eps}|-1\rt)^4d\hat x \leq s\sum_{n=0}^{N-1}\int_{\Omega}\lt(|u_n^{\eps}|^2-1\rt)^2d\hat x \leq C\epsilon^2|\ln\eps|^2\rightarrow 0.
	\end{equation}
	We conclude from (\ref{lem2.3.01}) and (\ref{lem2.3.03}) that
	\begin{equation*}
	\sum\limits_{n=0}^{N-1}\frac{j(u_n^{\eps})}{|u_n^{\eps}||\ln\eps|}\lt(|u_n^{\eps}|-1\rt)\chi_n \rightharpoonup 0 \text{  in } L^{\frac{4}{3}}(D;\mathbb{R}^2).
	\end{equation*}
	This together with (\ref{lem2.3.01})-(\ref{lem2.3.02}) implies that
	\begin{equation*}
	\frac{j^{\epsilon,s}}{|\ln\epsilon|}\rightharpoonup v \text{ in } L^{\frac{4}{3}}(D;\mathbb{R}^2).
	\end{equation*}
	
	To show \eqref{lem2.3.2}, we use an interpolation argument first used in \cite{JS1}. First we show compactness of the sequence $\bigl \{ \frac{J^{\epsilon,s}}{|\ln\epsilon|} \bigr \}$ in $(C^{0,1}_c(D))^{*}$ by a slicing argument. We define $\mu^{\epsilon,s}=\sum_{n=0}^{N-1} \mu_n^{\epsilon}\chi_n(x_3)$ with
	\begin{equation}\label{lem2.3.15}
	\mu_n^{\epsilon}=
	\begin{cases}
	\pi\sum_{\{i:a_n^i\in\tilde{\Omega}_n\}}d_n^i\delta_{a_n^i} & \text{if } E_{\epsilon}(u_n^{\epsilon}) < c_0M(\epsilon),\\
	0 & \text{otherwise},
	\end{cases}
	\end{equation}
	where $a_n^i$ and $d_n^i$ are as in Proposition \ref{prop3.3}, and the functions $\chi_n$ are defined in \eqref{chi}. Let $\nu_n^{\epsilon}=Ju_n^{\epsilon}-\mu_n^{\epsilon}$ and $\nu^{\epsilon,s}=J^{\epsilon,s}-\mu^{\epsilon,s}$. Let $\varphi\in C_c^{0,1}(D)$ be a test function. We have
	\begin{equation}\label{lem2.3.5}
	\int_{D}\varphi d\nu^{\epsilon,s}=\sum_{n=0}^{N-1}\int_{ns}^{(n+1)s}\int_{\Omega}\varphi(\hat x,x_3)d\nu_n^{\epsilon} dx_3.
	\end{equation}
	If $E_{\epsilon}(u_n^{\epsilon}) < c_0M(\epsilon)$, then we obtain from \eqref{lem2.3.6} that 
	\begin{equation}\label{lem2.3.7}
	\int_{\Omega}\varphi(\hat x,x_3)d\nu_n^{\epsilon} \leq C \parallel \varphi \parallel_{C^{0,1}(D)}\frac{E_{\epsilon}(u_n^{\epsilon})}{M(\epsilon)}
	\end{equation}
	for all $x_3 \in (ns,(n+1)s)$. If $E_{\epsilon}(u_n^{\epsilon}) \geq c_0M(\epsilon)$, then, by definition, we have $\mu_n^{\epsilon}=0$, and therefore, $\nu_n^{\epsilon}=Ju_n^{\epsilon}$. Using an integration by parts, we have
	\begin{equation}\label{lem2.3.04}
	\int_{\Omega}\varphi d\nu_n^{\epsilon}=-\frac{1}{2}\int_{\Omega}\hat{\nabla}^{\perp}\varphi \cdot j(u_n^{\epsilon}) d\hat x,
	\end{equation}
	where recall that $\hat{\nabla}^{\perp} = (-\partial_2,\partial_1)$. Using \eqref{lem2.3.4} we have
	\begin{equation*}
	\int_{\Omega}\lt|j(u_n^{\epsilon}) \rt|d\hat x \leq \int_{\Omega}|u_n^{\eps}|\cdot|\hat\nabla u_n^{\epsilon}|d\hat x = \int_{\Omega}\lt(|u_n^{\eps}|-1\rt)|\hat\nabla u_n^{\epsilon}|d\hat x + \int_{\Omega}|\hat\nabla u_n^{\epsilon}|d\hat x.
	\end{equation*}
	Using H\"{o}lder's inequality, we have
	\begin{equation*}
	\begin{split}
	\int_{\Omega}\lt(|u_n^{\eps}|-1\rt)&|\hat\nabla u_n^{\epsilon}|d\hat x \leq 2\eps\lt(\int_{\Omega}\frac{\lt(|u_n^{\eps}|-1\rt)^2}{4\epsilon^2}d\hat x\rt)^{\frac{1}{2}}\lt(\int_{\Omega}|\hat\nabla u_n^{\epsilon}|^2d\hat x\rt)^{\frac{1}{2}}\\
	&\leq 2\eps\lt(\int_{\Omega}\frac{\lt(|u_n^{\eps}|^2-1\rt)^2}{4\epsilon^2}d\hat x\rt)^{\frac{1}{2}}\lt(\int_{\Omega}|\hat\nabla u_n^{\epsilon}|^2d\hat x\rt)^{\frac{1}{2}}\leq C\epsilon\cdot E_{\epsilon}(u_n^{\epsilon})
	\end{split}
	\end{equation*}
	and
	\begin{equation*}
	\int_{\Omega}|\hat\nabla u_n^{\epsilon}|d\hat x \leq C E_{\epsilon}(u_n^{\epsilon})^{\frac{1}{2}}.
	\end{equation*}
	Therefore we have
	\begin{equation*}
	\int_{\Omega}\lt|j(u_n^{\epsilon}) \rt|d\hat x \leq C\lt(\epsilon E_{\epsilon}(u_n^{\epsilon})+E_{\epsilon}(u_n^{\epsilon})^{\frac{1}{2}}\rt).
	\end{equation*}
	It follows from this and (\ref{lem2.3.04}) that
	\begin{equation*}
	\begin{split}
	\lt|\int_{\Omega}\varphi d\nu_n^{\epsilon}\rt| \leq 
	\frac{\parallel \varphi \parallel_{C^{0,1}(D)}}{2} \int_{\Omega} \lt|j(u_n^{\epsilon}) \rt| d\hat x \leq C\parallel \varphi \parallel_{C^{0,1}(D)} \lt(\epsilon E_{\epsilon}(u_n^{\epsilon})+E_{\epsilon}(u_n^{\epsilon})^{\frac{1}{2}}\rt).
	\end{split}
	\end{equation*}
	The assumption $E_{\epsilon}(u_n^{\epsilon}) \geq c_0M(\epsilon)$ yields $E_{\epsilon}(u_n^{\epsilon})^{\frac{1}{2}} \leq c_0^{-\frac{1}{2}}\frac{E_{\epsilon}(u_n^{\epsilon})}{M(\epsilon)^{\frac{1}{2}}}$. Also from (\ref{M}) it is clear that $\epsilon<\frac{1}{M(\epsilon)^{\frac{1}{2}}}$ for $\eps$ sufficiently small. Therefore, we deduce that
	\begin{equation}\label{lem2.3.9}
	\lt|\int_{\Omega}\varphi d\nu_n^{\epsilon}\rt| \leq C\parallel \varphi \parallel_{C^{0,1}(D)} \frac{E_{\epsilon}(u_n^{\epsilon})}{M(\epsilon)^{\frac{1}{2}}},
	\end{equation}
	provided $E_{\epsilon}(u_n^{\epsilon}) \geq c_0M(\epsilon)$. It follows from \eqref{lem2.3.5}, (\ref{lem2.3.7}), \eqref{lem2.3.9}, and the choice of $M(\epsilon)$ that
	\begin{equation*}
	\lt|\int_{D}\varphi d\nu^{\epsilon,s}\rt| \leq C \parallel \varphi \parallel_{C^{0,1}(D)} s\sum_{n=0}^{N-1} \frac{E_{\epsilon}(u_n^{\epsilon})}{M(\epsilon)^{\frac{1}{2}}}
	\end{equation*}
	for some constant $C$ independent of $\epsilon$ and $s$. The above together with Lemma \ref{lem2.2} implies that
	\begin{equation}\label{lem2.3.10}
	\parallel \nu^{\epsilon,s} \parallel_{(C_c^{0,1}(D))^*} \leq Cs\sum_{n=0}^{N-1} \frac{E_{\epsilon}(u_n^{\epsilon})}{M(\epsilon)^{\frac{1}{2}}}\leq \frac{C|\ln\eps|^2}{M(\epsilon)^{\frac{1}{2}}}
	\end{equation}
	for some constant $C$ as above. \blue{Using \eqref{lem2.3.15}, \eqref{lem2.3.12} and Lemma \ref{lem2.2}, we have} 
	\begin{equation}\label{lem2.3.100}
	\blue{\lt\lVert \mu^{\epsilon,s}\rt\rVert_{(C_c^{0}(D))^*} \leq s\sum_{n}\sum_{i}\pi|d_n^i| \leq C\frac{s}{|\ln\eps|}\sum_n \int_{\Omega}|\hat{\nabla}u_n^{\epsilon}|^2 d\hat x\leq C|\ln\eps|.}
	\end{equation}
	Using the definition of the Jacobian, it is clear that $|Ju_n^{\epsilon}| \leq C|\hat{\nabla}u_n^{\epsilon}|^2$. So we obtain from Lemma \ref{lem2.2} that
	\begin{equation*}
	\lVert J^{\eps,s}\rVert_{(C_c^{0}(D))^*} \leq C|\ln\eps|^2.
	\end{equation*}
	It follows that
	\begin{equation}
	\label{lem2.3.14}
	\parallel \nu^{\epsilon,s} \parallel_{(C_c^{0}(D))^*}\leq \lVert J^{\eps,s}\rVert_{(C_c^{0}(D))^*} + \lVert \mu^{\eps,s}\rVert_{(C_c^{0}(D))^*} \leq C|\ln\epsilon|^2.
	\end{equation}
	By Lemma 3.3 in \cite{JS1}, for any $0<\alpha<1$, we have
	\begin{equation}\label{lem2.3.11}
	\parallel \nu^{\epsilon,s} \parallel_{(C_c^{0,\alpha}(D))^*} \leq C\parallel \nu^{\epsilon,s} \parallel_{(C_c^{0}(D))^*}^{1-\alpha} \parallel \nu^{\epsilon,s} \parallel_{(C_c^{0,1}(D))^*}^{\alpha}.
	\end{equation}
	Therefore, we deduce from \eqref{M} and \eqref{lem2.3.10}, (\ref{lem2.3.14})-\eqref{lem2.3.11} that
	\begin{equation}\label{lem2.3.17}
	\lt\lVert \frac{\nu^{\epsilon,s}}{|\ln\epsilon|} \rt\rVert_{(C_c^{0,\alpha}(D))^*} \rightarrow 0 \quad\text{ as } (\epsilon,s) \rightarrow (0,0).
	\end{equation}
	\blue{It is clear from (\ref{lem2.3.100}) that $\{\frac{\mu^{\epsilon,s}}{|\ln\epsilon|}\}$ forms a bounded sequence in $(C_c^{0}(D))^*$.} The compact embedding $(C_c^{0}(D))^* \subset\subset (C_c^{0,\alpha}(D))^*$ (see Lemma 3.4 in \cite{JS1}) implies the precompactness of $\{\frac{\mu^{\epsilon,s}}{|\ln\epsilon|}\}$ in $(C_c^{0,\alpha}(D))^*$. Therefore, the precompactness of $\{\frac{J^{\epsilon,s}}{|\ln\epsilon|}\}$ in $(C_c^{0,\alpha}(D))^*$ follows from \eqref{lem2.3.17}.
	
	Finally, to show \eqref{lem2.3.3}, we let $Z_n^{\epsilon} = \cup_i B_n^i$, where $B_n^i$ are the balls as in Proposition \ref{prop3.3}. (If $E_{\epsilon}(u_n^{\epsilon}) \geq c_0M(\epsilon)$, we simply set $Z_n^{\epsilon} = \emptyset$.) By \eqref{lem2.3.18}, we have $|Z_n^{\epsilon}| \leq \frac{C}{M(\epsilon)^2}$, from which we immediately have 
	$$\lim_{(\epsilon,s) \rightarrow (0,0)} s\sum_{n} |Z_n^{\epsilon}| = 0.$$
	Let $\varphi\in C^1_c(D)$ be a test function. It follows from \eqref{lem2.3.2} and \eqref{lem2.3.17} that
	\begin{equation}\label{lem2.3.19}
	\int_{D}\varphi d w = \lim_{(\eps,s)\rightarrow(0,0)}\int_{D}\varphi d\frac{J^{\eps,s}}{|\ln\eps|} = \lim_{(\eps,s)\rightarrow(0,0)}\int_{D}\varphi d\frac{\mu^{\eps,s}}{|\ln\eps|}.
	\end{equation}
	By \eqref{lem2.3.12}, we have
	\begin{equation}\label{lem2.3.21}
	\begin{split}
	\int_{D}\varphi d\frac{\mu^{\eps,s}}{|\ln\eps|} &\leq \sup|\varphi|\cdot s\sum_{n}\sum_{i}\frac{\pi|d_n^i|}{|\ln\eps|}\\
	&\leq \sup|\varphi|\cdot \frac{s}{2|\ln\eps|^2}\sum_n \int_{Z_n^{\eps}}|\hat{\nabla}u_n^{\epsilon}|^2 d\hat x + o_{\eps}(1)\sup|\varphi|.
	\end{split}
	\end{equation}
	By putting \eqref{lem2.3.21} and \eqref{lem2.3.19} together, and taking the supremum over all $\varphi$ such that $\sup|\varphi|\leq 1$, we immediately obtain \eqref{lem2.3.3}.
\end{proof}

\subsection{Proof of lower bound in Theorem \ref{T1}}

Now we prove the lower bound estimate in Theorem \ref{T1}. The proof follows a standard idea used for the Ginzburg-Landau energy. Here the treatment of the magnetic potential is slightly different due to the layered structure of the problem.

\begin{proof}[Proof of \eqref{thm1.3}]
	First, by a short argument using \eqref{lem2.3.1}, \eqref{lem2.3.3} and (\ref{lem2.3.4}), it is easy to show
	\begin{equation}\label{2.4.1}
	\liminf_{(\epsilon,s)\rightarrow(0,0)} \frac{s\sum^N_{n=0}E_{\epsilon}(u_n^{\eps})}{|\ln\epsilon|^2} \geq \frac{1}{2}\parallel v \parallel_{L^2(D)}^2 + |w|(D).
	\end{equation}
	
	Let us focus on the terms involving the magnetic potential in the decomposition \eqref{splitting}. By \eqref{2.1}, we have
	$$\frac{1}{2}\int_{\mathbb R^3}\lt|\nabla\times(\vec A^{\epsilon,s} - h_{ex}\vec a)\rt|^2 dx \leq C|\ln\epsilon|^2.$$
	Using arguments similar to those in Proposition \ref{prop2.1}, we have that, up to a subsequence,
	\begin{equation}\label{2.4.5}
	\frac{\vec A^{\epsilon,s} - h_{ex}\vec a}{|\ln\epsilon|} \rightharpoonup \vec A-h_0\vec a \text{\quad in } \check H^1(\mathbb R^3;\mathbb R^3)
	\end{equation}
	for some $\vec A \in h_0\vec a + \check H^1(\mathbb R^3;\mathbb R^3)$, and that
	\begin{equation}\label{2.4.6}
	\liminf_{(\epsilon,s)\rightarrow(0,0)} \frac{1}{2|\ln\epsilon|^2}\int_{\mathbb R^3}\lt|\nabla\times(\vec A^{\epsilon,s} - h_{ex}\vec a)\rt|^2 dx \geq \frac{1}{2}\int_{\mathbb R^3}\lt|\nabla\times(\vec A - h_0 \vec a)\rt|^2 d x.
	\end{equation}
	
	Recall the definition of the functions $\chi_n$ in \eqref{chi}. Using Lemmas \ref{l2.2} and \ref{l3.3}, we have
	\begin{equation*}
	\lt\lVert \sum_n\hat A^{\eps,s}_n\chi_n-\hat A^{\eps,s}\rt\rVert_{L^2(D)}^2\leq s^2\int_{D}\lt|\nabla \vec A^{\eps,s}\rt|^2 dx \leq Cs^2|\ln\eps|^2.
	\end{equation*}
	It follows that
	\begin{equation*}
	\sum_n\frac{\hat A^{\eps,s}_n}{|\ln\eps|}\chi_n \rightarrow \frac{\hat A^{\epsilon,s}}{|\ln\eps|} \quad\text{ in } L^2(D;\mathbb{R}^2).
	\end{equation*}
	We deduce from \eqref{2.4.5} and the compact Sobolev embedding theorem that, up to a subsequence,
	\begin{equation*}
	\frac{\hat A^{\epsilon,s}}{|\ln\eps|}\rightarrow \hat A\quad\text{ in } L^2(D;\mathbb{R}^2).
	\end{equation*}
	Therefore, we obtain
	\begin{equation}\label{2.4.8}
	\sum_n\frac{\hat A^{\eps,s}_n}{|\ln\eps|}\chi_n \rightarrow \hat A \quad\text{ in } L^2(D)
	\end{equation}
	as $(\epsilon,s)\rightarrow(0,0)$. Now we write
	\begin{equation*}
	\begin{split}
	&\frac{s}{|\ln\epsilon|^2} \sum^N_{n=0} \int_\Omega j(u_n^{\eps})\cdot\hat{A}^{\eps,s}_nd\hat x \\
	&\quad\quad= s \sum^N_{n=0} \int_\Omega\frac{j(u_n^{\eps})}{|u_n^{\eps}||\ln\eps|}\cdot\lt(|u_n^{\eps}|-1\rt)\frac{\hat{A}^{\eps,s}_n}{|\ln\eps|}d\hat x + s \sum^N_{n=0} \int_\Omega\frac{j(u_n^{\eps})}{|u_n^{\eps}||\ln\eps|}\cdot \frac{\hat{A}^{\eps,s}_n}{|\ln\eps|}d\hat x.
	\end{split}
	\end{equation*}
	The above first term on the right side converges to zero because of (\ref{lem2.3.1}) and (\ref{lem3.4.2}). Using (\ref{lem2.3.1}) and (\ref{2.4.8}) to the second term implies
	\begin{equation}\label{2.4.9}
	\lim_{(\epsilon,s)\rightarrow(0,0)} \frac{s}{|\ln\epsilon|^2} \sum^N_{n=0} \int_\Omega(\hat{\nabla}u_n^{\eps},\imath u_n^{\eps})\cdot\hat{A}_n^{\epsilon,s}d\hat x = \int_{D} v \cdot \hat A d x.
	\end{equation}
	
	Next, we write
	\begin{equation*}
	\frac{s}{2}\sum^N_{n=0} \int_\Omega|u_n^{\eps}|^2|\hat{A}_n^{\epsilon,s}|^2d\hat x = \frac{s}{2}\sum^N_{n=0} \int_\Omega(|u_n^{\eps}|^2-1)|\hat{A}_n^{\epsilon,s}|^2d\hat x + \frac{s}{2}\sum^N_{n=0} \int_\Omega|\hat{A}_n^{\epsilon,s}|^2d\hat x.
	\end{equation*}
	It follows from (\ref{lem3.4.1}) and \eqref{2.4.8} that
	\begin{equation}\label{2.4.15}
	\lim_{(\epsilon,s)\rightarrow (0,0)}\frac{s}{2|\ln\epsilon|^2}\sum^N_{n=0} \int_\Omega|u_n^{\eps}|^2|\hat{A}_n^{\epsilon,s}|^2d\hat x = \frac{1}{2}\int_{D} |\hat A|^2 dx.
	\end{equation}
	
	Finally, we have
	\begin{equation}\label{2.4.25}
	\frac{s}{2\lambda^{2}s^2}\sum^{N-1}_{n=0}\int_{\Omega}\lt|u_{n}^{\eps}\rt|^{2}d\hat{x}=\frac{s}{2\lambda^{2}s^2}\sum^{N-1}_{n=0}\int_{\Omega}\lt(\lt|u_{n}^{\eps}\rt|^{2}-1\rt)d\hat{x}+\frac{sN|\Omega|}{2\lambda^{2}s^2},
	\end{equation}
	where $|\Omega|$ is the measure of the domain $\Omega$.
	Using H\"{o}lder's and the Cauchy-Schwarz inequalities, we have
	\begin{equation}\label{2.4.26}
	\begin{split}
	\frac{s}{2\lambda^{2}s^2}\sum^{N-1}_{n=0}\int_{\Omega}\lt(\lt|u_{n}^{\eps}\rt|^{2}-1\rt)d\hat{x} &\leq \frac{s|\Omega|^{\frac{1}{2}}}{2\lambda^{2}s^2}\sum^{N-1}_{n=0}\lt(\int_{\Omega}\lt(\lt|u_{n}^{\eps}\rt|^{2}-1\rt)^2d\hat{x}\rt)^{\frac{1}{2}}\\
	&\leq \frac{|\Omega|^{\frac{1}{2}}}{2\lambda^{2}s^2}\lt(s\sum^{N-1}_{n=0}\int_{\Omega}\lt(\lt|u_{n}^{\eps}\rt|^{2}-1\rt)^2d\hat{x}\rt)^{\frac{1}{2}} (sN)^{\frac{1}{2}}\\
	&\leq \frac{|\Omega|^{\frac{1}{2}}}{2\lambda^{2}s^2}\cdot C\eps|\ln\eps|\cdot L^{\frac{1}{2}}=C(|D|,\lambda)\frac{\epsilon|\ln\eps|}{s^2},
	\end{split}
	\end{equation}
	where in the above we have used Lemma \ref{lem2.2} and the facts that $sN=L$ and $L|\Omega|=|D|$. Now combining (\ref{2.4.25})-(\ref{2.4.26}) it is clear that
	\begin{equation*}
	\frac{1}{|\ln\eps|^2}\frac{s}{2\lambda^{2}s^2}\sum^{N-1}_{n=0}\int_{\Omega}\lt|u_{n}^{\eps}\rt|^{2}d\hat{x} \leq C(|D|,\lambda)\frac{\epsilon|\ln\eps|}{s^2|\ln\eps|^2} + \frac{|D|}{2\lambda^{2}}\frac{1}{s^2|\ln\eps|^2}\rightarrow 0,
	\end{equation*}
	since $\epsilon|\ln\eps|\rightarrow 0$ and $s^2|\ln\eps|^2\rightarrow \infty$ under the hypothesis (\ref{h}). It follows that
	\begin{equation}\label{2.4.23}
	\begin{split}
	&\frac{1}{|\ln\eps|^2}\frac{s}{2\lambda^{2}s^2}\sum^{N-1}_{n=0}\int_\Omega\lt|u_{n+1}^{\eps}-u_{n}^{\eps}e^{\imath \int_{ns}^{(n+1)s}(A^{\epsilon,s})^{3}dx_{3}}\rt|^{2}d\hat{x}\\
	&\quad\quad\quad\quad \leq \frac{1}{|\ln\eps|^2}\frac{2s}{2\lambda^{2}s^2}\sum^{N-1}_{n=0}\int_\Omega\lt(|u_{n+1}^{\eps}|^2+|u_{n}^{\eps}|^2\rt)d\hat{x}\rightarrow 0.
	\end{split}
	\end{equation}
	Putting \eqref{2.4.1}, \eqref{2.4.6}, \eqref{2.4.9}, \eqref{2.4.15} and \eqref{2.4.23} into \eqref{splitting}, we obtain \eqref{thm1.3}.
\end{proof}

\section{Construction of the order parameters}

In this and the next sections, we construct the recovery sequence satisfying the upper bound inequality in Theorem \ref{T1}. \blue{In the following sections, we omit the $(\tilde{\cdot})$ in the recovery sequence to avoid making the notations overly complicated.} This section is devoted to the construction of the order parameters $\{u_n\}$ on the layers. Essentially, on each layer, we follow constructions of test configurations for the two-dimensional Ginzburg-Landau energy used in \cite{SS1} and \cite{JS}. Here additional work is needed to take into account the limiting process as the interlayer distance $s$ tends to zero. This introduces several aspects of technical issues. First, this changes the locations of the layers and the vortices on the layers. Secondly, our results in this section do not rely on the assumption \eqref{h}. In other words, the estimates are independent of the relative size of $\eps$ and $s$. Lastly, we need to show compactness for discrete quantities that are in the form of sums of functions defined on the layers. As a result, some technical modifications of the two-dimensional constructions as well as compactness results are needed in the proof.

Given $v\in V\cap C^{\infty}(\overline D;\mb R^2)$ and $s=\frac{L}{N}$, for all $n=0,1,\dddot\ ,N-1$, we define $v_n(\hat x)=v(\hat x,ns)$ and $w_n(\hat x)=w(\hat x,ns)$, where $w(x)=\frac{1}{2}\cl v(x)$. Let
\begin{equation*}
v^s(x):=\sum_{n=0}^{N-1}v_n(\hat x)\chi_n(x_3), \quad \quad w^s(x):=\sum_{n=0}^{N-1}w_n(\hat x)\chi_n(x_3).
\end{equation*}
It is clear that
\begin{equation}\label{3.5}
v^s\rightarrow v \quad \text{ and } \quad w^s\rightarrow w \quad \text{uniformly as } s\rightarrow 0. 
\end{equation}
The main result of this section is the following

\begin{theorem}\label{p3.3}
	Given $v\in V\cap C^{\infty}(\overline D;\mb R^2)$, there exists a sequence $\{u_n^{\epsilon}\}_{n=0}^{N}\subset [H^1(\Omega;\mb C)]^{N+1}$ such that
	\begin{equation}\label{p3.3.1}
	v^{\epsilon,s}:=\frac{1}{|\ln\epsilon|}\sum_{n=0}^{N-1}j(u_n^{\epsilon})\chi_n\rightarrow v \quad\text{in } L^p(D;\mathbb R^2) \text{ for all } p<\frac{3}{2},
	\end{equation} 
	\begin{equation}\label{p3.3.2}
	w^{\epsilon,s}:=\frac{1}{|\ln\epsilon|}\sum_{n=0}^{N-1}J(u_n^{\epsilon})\chi_n\rightharpoonup w \quad\text{in } W^{-1,p}(D) \text{ for all } p<\frac{3}{2},
	\end{equation}
	and
	\begin{equation}\label{p3.3.3}
	\limsup_{(\epsilon,s)\rightarrow (0,0)}s\sum_{n=0}^{N}\frac{E_{\epsilon}(u_n^{\epsilon})}{|\ln\epsilon|^2}\leq \frac{\lVert v \rVert_{L^2(D)}^2}{2} + \lVert w \rVert_{L^1(D)}.
	\end{equation}
\end{theorem}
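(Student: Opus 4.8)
The plan is to reduce the three-dimensional construction to a family of two-dimensional constructions, one per layer, following the recovery-sequence constructions for the $2D$ Ginzburg--Landau energy in \cite{SS1} and \cite{JS}, and then to sum over layers. Write $\mu_n := 2w_n = \cl v_n$, a smooth density on $\Omega$. On the $n$th layer I first replace $\mu_n$ by an atomic measure: partition $\Omega$ into a grid of squares of side $\ell$ with $\eps\ll\ell\ll 1$ (taking $\ell\sim|\ln\eps|^{-1/2}$, so that each square receives at most one vortex while the total vortex number stays $\sim|\ln\eps|$), and in each square place a point $a_i^{(n)}$ with degree $d_i^{(n)}\in\{0,\pm1\}$ chosen so that $\frac{\pi}{|\ln\eps|}\sum_i d_i^{(n)}\delta_{a_i^{(n)}}$ approximates $w_n\,d\hat x$ weakly and $\frac{\pi}{|\ln\eps|}\sum_i|d_i^{(n)}|\to\int_\Omega|w_n|\,d\hat x$. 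Because $v\in C^\infty(\overline D)$, all of $v,\hat\nabla v,w$ are bounded uniformly in $n$, which will make every per-layer estimate uniform.

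Next I set $u_n^\eps=\rho_\eps e^{\imath\theta_n}$, where $\rho_\eps$ is the standard radial profile vanishing on the $\eps$-cores and equal to $1$ away from them, and $\theta_n=\sum_i d_i^{(n)}\arg(\hat x-a_i^{(n)})+\zeta_n$, with $\zeta_n$ a single-valued correction chosen so that $\hat\nabla\theta_n\approx|\ln\eps|\,v_n$ away from the cores. This is possible because $\cl(|\ln\eps|v_n)=|\ln\eps|\mu_n\approx 2\pi\sum_i d_i^{(n)}\delta_{a_i^{(n)}}$ matches the curl of the singular phase on the simply connected $\Omega$, so the difference is (to leading order) curl-free. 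The standard computation then gives, uniformly in $n$,
\begin{equation*}
E_\eps(u_n^\eps)\leq \frac{|\ln\eps|^2}{2}\int_\Omega|v_n|^2\,d\hat x+\pi|\ln\eps|\sum_i|d_i^{(n)}|+o(|\ln\eps|^2),
\end{equation*}
the first term being the smooth-current energy $\tfrac12\int\rho_\eps^2|\hat\nabla\theta_n|^2$ away from the cores and the second the core energy $\pi|\ln\eps|$ per degree-$\pm1$ vortex. Multiplying by $s$, summing over $n$, dividing by $|\ln\eps|^2$, and using that the Riemann sums $s\sum_n\int_\Omega|v_n|^2\to\int_D|v|^2$ and $\frac{\pi}{|\ln\eps|}s\sum_n\sum_i|d_i^{(n)}|\to s\sum_n\int_\Omega|w_n|\to\int_D|w|$ (via \eqref{3.5}) yields \eqref{p3.3.3}, provided the summed error $s\sum_n o(|\ln\eps|^2)$ is negligible, which holds since the per-layer $o(\cdot)$ is uniform in $n$.

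For \eqref{p3.3.1} I write $\frac{1}{|\ln\eps|}j(u_n^\eps)=v_n+r_n^\eps$, where the remainder $r_n^\eps$ is supported essentially on $\cup_i B(a_i^{(n)},\ell)$ and behaves like $\frac{1}{|\ln\eps|}\sum_i d_i^{(n)}/|\hat x-a_i^{(n)}|$. Since $1/r\in L^p_{loc}$ in $2D$ for every $p<2$, a short computation gives $\|r_n^\eps\|_{L^p(\Omega)}^p\lesssim|\ln\eps|\cdot|\ln\eps|^{-p}\ell^{2-p}\sim|\ln\eps|^{-p/2}$ uniformly in $n$, whence $\|v^{\eps,s}-v^s\|_{L^p(D)}^p=s\sum_n\|r_n^\eps\|_{L^p(\Omega)}^p\lesssim L|\ln\eps|^{-p/2}\to 0$; together with $v^s\to v$ uniformly from \eqref{3.5} this gives \eqref{p3.3.1} for all $p<2$, in particular $p<\tfrac32$. (Note $L^2$ genuinely fails, since the core contribution to $\|r_n^\eps\|_{L^2}^2$ is $O(1)$.) For \eqref{p3.3.2}, $\frac{1}{|\ln\eps|}J(u_n^\eps)$ is weakly close to $\frac{\pi}{|\ln\eps|}\sum_i d_i^{(n)}\delta_{a_i^{(n)}}$, which converges weakly-$*$ to $w_n\,d\hat x$; assembling the layers and using $w^s\to w$ gives $w^{\eps,s}\rightharpoonup w$ weakly-$*$ in $\mathcal{M}(D)$ with a uniform mass bound. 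Since in three dimensions any finite measure lies in $W^{-1,p}$ exactly when $p<\tfrac32$ (equivalently $W^{1,p'}_0(D)\hookrightarrow\hookrightarrow C^0(\overline D)$ for $p'>3$), the embedding $\mathcal{M}(D)\hookrightarrow\hookrightarrow W^{-1,p}(D)$ is compact for $p<\tfrac32$, upgrading weak-$*$ convergence to \eqref{p3.3.2}.

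I expect the main obstacle to be the joint limit $(\eps,s)\to(0,0)$ with $\eps$ and $s$ \emph{independent} (no coupling such as \eqref{h} is assumed in this section). Because the number of layers is $\sim L/s$ and the number of vortices per layer is $\sim|\ln\eps|$, every per-layer estimate must be made quantitatively uniform in $n$ so that, after multiplication by $s$ and summation, the accumulated remainder and cross-term errors still vanish; this forces care in choosing $\ell$ and $\rho_\eps$ so that all error bounds depend on $\eps$ alone, relying throughout on the uniform control of $v,\hat\nabla v,w$ on $\overline D$. The second delicate point is the simultaneous matching of current and vorticity: the correction $\zeta_n$ must force $\hat\nabla\theta_n$ to agree with $|\ln\eps|v_n$ to leading order in $L^2$ away from the cores while the singular part carries exactly the prescribed vorticity, and verifying that the cross term between these two pieces contributes only $o(|\ln\eps|^2)$ after summation over layers is the heart of the energy estimate.
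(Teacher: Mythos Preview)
Your overall strategy matches the paper's, but there is a real gap in the construction of $\zeta_n$. You want $\zeta_n$ single-valued with $\hat\nabla\theta_n\approx|\ln\eps|v_n$, arguing that the curl of $|\ln\eps|v_n-\sum_i d_i^{(n)}\hat\nabla\arg(\hat x-a_i^{(n)})$ is ``to leading order'' zero. But for a single-valued $\zeta_n$ to exist, this curl must vanish \emph{exactly}, and it does not: it equals $2|\ln\eps|w_n-2\pi\sum_i d_i^{(n)}\delta_{a_i^{(n)}}$, a smooth density minus an atomic measure. Consequently your remainder $r_n^\eps$ is not ``supported essentially on $\cup_i B(a_i^{(n)},\ell)$''; it carries a global piece coming from this curl mismatch, and the pointwise bound $|r_n^\eps|\lesssim\frac{1}{|\ln\eps|}\sum_i|d_i^{(n)}|/|\hat x-a_i^{(n)}|$ does not capture it.

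The paper closes this gap by first taking the Helmholtz decomposition $v_n=v_{n,1}+v_{n,2}$ with $v_{n,1}=-2\hat\nabla\times\hat\Delta_D^{-1}w_n$ and $v_{n,2}=\hat\nabla g_n$, treating the two pieces separately. For the curl-carrying part the phase is set so that its gradient equals $|\ln\eps|v_n^\eps$ with $v_n^\eps:=-2\hat\nabla\times\hat\Delta_D^{-1}w_n^\eps$; here $\cl v_n^\eps=2w_n^\eps$ \emph{exactly} (the atomic measure), so the multivalued phase is well-defined modulo $2\pi$. The $L^p$ convergence of the current then reduces to the elliptic estimate $\lVert v_n^\eps-v_{n,1}\rVert_{L^p(\Omega)}\le C\lVert w_n^\eps-w_n\rVert_{W^{-1,p}(\Omega)}$, with the right-hand side driven to zero by an explicit quantitative vortex-placement lemma (Lemma~\ref{l3.5}); this replaces your heuristic remainder argument. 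The curl-free part is handled trivially by $u_{n,2}^\eps=e^{\imath|\ln\eps|g_n}$, and the final map is the product $u_n^\eps=u_{n,1}^\eps u_{n,2}^\eps$. The cross term you correctly flag as ``the heart of the energy estimate'' is then killed not by a direct computation but by an orthogonality lemma (Lemma~\ref{l4.2}): the sliced fields $v_1^s,v_2^s$ converge in $L^2(D)$ to limits $v_1,v_2$ with $\langle v_1,v_2\rangle=0$, so after showing $\sum_n\frac{j(u_{n,1}^\eps)}{|\ln\eps|}\chi_n\rightharpoonup v_1$ and $\sum_n\frac{j(u_{n,2}^\eps)}{|\ln\eps|}\chi_n\to v_2$ in $L^2$, the cross term tends to $\int_D v_1\cdot v_2=0$. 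Proving this $L^2(D)$ convergence of the layerwise Helmholtz pieces (not merely layer by layer) as $s\to0$ is a separate step that your outline does not address.
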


The above theorem generalizes Proposition 7.1 in \cite{JS}, in which a recovery sequence for the two-dimensional Ginzburg-Landau energy is constructed. Here we construct functions $u_n^{\epsilon}$ on the layers $\Omega_n = \Omega\times\{ns\}$ based on the two-dimensional constructions given in \cite{SS1} and \cite{JS}. We follow the approach given in \cite{JS}, which is more convenient in the three-dimensional context. As mentioned above, some technical modifications of the two-dimensional constructions are needed to obtain uniform estimates on the layers. Also, some compactness results need to be proved. (In particular, see Lemma \ref{l4.2}.)

Since $\Omega$ is a simply connected smooth domain, it is well-known that one can decompose $L^2(\Omega;\mb R^2)$ as a direct sum
\begin{equation*}
L^2(\Omega;\mb R^2) = \mathscr F\oplus \mathscr G,
\end{equation*}
where
\begin{equation*}
\mathscr F:= \{v: v=\hat{\nabla}\times f, f\in H^1(\Omega), f=0 \text{ on } \partial\Omega\},
\end{equation*}
and
\begin{equation*}
\mathscr G:= \{v: v=\hat{\nabla} g, g\in H^1(\Omega)\}.
\end{equation*}
(See, e.g., \cite{GR}.) For each $v_n$, we write
\begin{equation}\label{vn}
v_n = v_{n,1}+v_{n,2},
\end{equation}
where $v_{n,1}\in\mathscr F$ and $v_{n,2}\in\mathscr G$. Then we have
\begin{equation*}
v_{n,1} = (\partial_2 f, -\partial_1 f)
\end{equation*}
for some $f\in H^1(\Omega)$ with $f=0$ on $\partial\Omega$. By simple calculations, we have
\begin{equation*}
-\hat\Delta f = \cl v_{n,1} = \cl v_n = 2w_n,
\end{equation*}
where $\hat{\Delta}$ denotes the two-dimensional Laplacian. Therefore, we write $f=-2\hat\Delta^{-1}_D w_n$, and $v_{n,1}=-2\hat{\nabla}\times\hat\Delta^{-1}_D w_n$, where $\hat\Delta^{-1}_D$ is the inverse operator of the two-dimensional Laplacian with zero Dirichlet boundary data. Define
\begin{equation*}
v^s_1(x):=\sum_{n=0}^{N-1}v_{n,1}(\hat x)\chi_n(x_3) \quad\text{and}\quad v^s_2(x):=\sum_{n=0}^{N-1}v_{n,2}(\hat x)\chi_n(x_3).
\end{equation*}
First, we prove compactness of the sequences $\{v^s_1\}$ and $\{v^s_2\}$ in $L^2(D;\mb R^2)$.

\begin{lemma}\label{l4.2}
	Let $v^s_1$ and $v^s_2$ be as above. There exist $v_1, v_2\in L^2(D;\mb R^2)$ such that 
	\begin{equation*}
	v_1+v_2 = v,
	\end{equation*}
	\begin{equation*}
	v^s_1 \rightarrow v_1 \quad\text{ and }\quad v^s_2\rightarrow v_2 \quad \text{ in } L^2(D;\mb R^2) \text{ as } s\rightarrow 0,
	\end{equation*}
	and
	\begin{equation*}
	\langle v_1,v_2\rangle = \langle v_1^s,v_2 \rangle=\langle v_1,v_2^s\rangle = 0
	\end{equation*}
	for all $s$, where $\langle\cdot,\cdot\rangle$ denotes the inner product in $L^2(D;\mb R^2)$.
\end{lemma}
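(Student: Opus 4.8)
The plan is to obtain $v_1$ and $v_2$ by carrying out the Helmholtz decomposition \emph{slicewise in $x_3$} on the smooth field $v$, and then to recognize $v_1^s$ and $v_2^s$ as piecewise-constant samplings of these limits in the $x_3$-variable. Let $P_{\mathscr{F}}$ and $P_{\mathscr{G}}$ denote the orthogonal projections of $L^2(\Omega;\mb R^2)$ onto $\mathscr{F}$ and $\mathscr{G}$; these are fixed operators, independent of $x_3$. Since $v\in C^\infty(\overline D;\mb R^2)$, each slice $v(\cdot,x_3)$ lies in $L^2(\Omega;\mb R^2)$, so I set
\[
v_1(\cdot,x_3):=P_{\mathscr{F}}\,v(\cdot,x_3),\qquad v_2:=v-v_1,
\]
whence $v_2(\cdot,x_3)=P_{\mathscr{G}}\,v(\cdot,x_3)\in\mathscr{G}$ for every $x_3$ and $v_1+v_2=v$. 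Equivalently $v_1(\cdot,x_3)=-2\hat{\nabla}\times\hat{\Delta}_D^{-1}w(\cdot,x_3)$, which at $x_3=ns$ reduces to the layer formula $v_{n,1}=-2\hat{\nabla}\times\hat{\Delta}_D^{-1}w_n$. In particular $v_{n,1}=v_1(\cdot,ns)$ and $v_{n,2}=v_2(\cdot,ns)$, so that $v_1^s$ and $v_2^s$ are precisely the samplings of $v_1$ and $v_2$ at the layer heights.

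For the convergence, I would first note that the uniform continuity of $v$ on $\overline D$ makes $x_3\mapsto v(\cdot,x_3)$ uniformly continuous from $[0,L]$ into $L^2(\Omega;\mb R^2)$; composing with the contraction $P_{\mathscr{F}}$ gives $\lVert v_1(\cdot,x_3)-v_1(\cdot,x_3')\rVert_{L^2(\Omega)}\le\lVert v(\cdot,x_3)-v(\cdot,x_3')\rVert_{L^2(\Omega)}$, so $x_3\mapsto v_1(\cdot,x_3)$ is uniformly continuous as well, and $v_1,v_2\in L^2(D;\mb R^2)$. Denoting by $\omega$ the modulus of continuity of this map, I estimate
\[
\lVert v_1^s-v_1\rVert_{L^2(D)}^2=\sum_{n=0}^{N-1}\int_{ns}^{(n+1)s}\lVert v_1(\cdot,ns)-v_1(\cdot,x_3)\rVert_{L^2(\Omega)}^2\,dx_3\le L\,\omega(s)^2\to 0.
\]
Then $v_2^s=v^s-v_1^s\to v-v_1=v_2$ in $L^2(D;\mb R^2)$, using the uniform convergence $v^s\to v$ from \eqref{3.5}.

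The orthogonality relations all reduce to the single slicewise fact that $\langle\phi,\psi\rangle_{L^2(\Omega)}=0$ whenever $\phi\in\mathscr{F}$ and $\psi\in\mathscr{G}$: writing $\phi=\hat{\nabla}\times f$ with $f\in H^1(\Omega)$, $f=0$ on $\partial\Omega$, and $\psi=\hat{\nabla}g$, an integration by parts kills the cross term because $f$ vanishes on $\partial\Omega$. I would then integrate this identity in $x_3$. For $\langle v_1,v_2\rangle$ both factors lie slicewise in $\mathscr{F}$ and $\mathscr{G}$ respectively; for $\langle v_1^s,v_2\rangle$ the field $v_1^s(\cdot,x_3)=v_{n,1}\in\mathscr{F}$ is constant on the $n$th slab while $v_2(\cdot,x_3)\in\mathscr{G}$; and symmetrically for $\langle v_1,v_2^s\rangle$. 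In each case the slicewise inner product vanishes identically, so the $L^2(D)$ inner product vanishes.

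The argument is mostly routine once the candidates are chosen correctly, and I do not anticipate a serious obstacle. The one point requiring care is to verify simultaneously that $v_1$ is the $L^2(D)$-limit of $v_1^s$ \emph{and} is slicewise $L^2(\Omega)$-orthogonal to $v_2$; identifying $v_1^s$ as a sampling of the $x_3$-continuous field $P_{\mathscr{F}}v(\cdot,x_3)$ settles both at once. The features that make everything clean are the smoothness of $v$ (used via \eqref{3.5} and uniform continuity) together with the $x_3$-independence of the projections $P_{\mathscr{F}}$ and $P_{\mathscr{G}}$.
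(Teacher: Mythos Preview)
Your proof is correct and takes a genuinely different route from the paper's. The paper argues indirectly: it passes to a subsequence, extracts weak $L^2$ limits $v_1,v_2$ of $v_1^{s_k},v_2^{s_k}$ by boundedness, deduces the orthogonality relations by passing to the limit in $\langle v_1^{k_j},v_2^{k_l}\rangle=0$, and then upgrades weak to strong convergence via the Pythagorean identity $\lVert v^{k_j}-v\rVert_2^2=\lVert v_1^{k_j}-v_1\rVert_2^2+\lVert v_2^{k_j}-v_2\rVert_2^2$; a separate uniqueness argument is needed to pass from subsequential to full convergence. Your approach is constructive: you identify $v_1,v_2$ from the outset as the slicewise Helmholtz projections $P_{\mathscr F}v(\cdot,x_3)$ and $P_{\mathscr G}v(\cdot,x_3)$, recognize $v_1^s,v_2^s$ as piecewise-constant $x_3$-samplings of these continuous $L^2(\Omega)$-valued maps, and read off both the strong convergence and the orthogonality directly. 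This is shorter, avoids any subsequence extraction, and has the bonus of making explicit what $v_1$ and $v_2$ actually are; the paper's soft argument, by contrast, would generalize more readily to settings where an explicit slicewise formula for the projections is unavailable.
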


\begin{proof}
	Let $\{s_k\}$ be a sequence such that $\lim_{k\rightarrow \infty} s_k = 0$. We denote $v^k = v^{s_k}$ and $v^k_i=v^{s_k}_i$ for $i=1, 2$. It follows from \eqref{3.5} that
	\begin{equation}\label{l421}
	v^k_1 + v^k_2 = v^k \rightarrow v \quad\text{ in } L^2(D;\mb R^2) \text{ as } k\rightarrow\infty.
	\end{equation}
	Since $\lVert v^k \rVert_{2}^2 = \lVert v^k_1 \rVert_{2}^2+\lVert v^k_2 \rVert_{2}^2$, we deduce from \eqref{l421} that $\{v^k_1\}$ and $\{v^k_2\}$ form bounded sequences in $L^2$. Therefore, there exist $v_1, v_2\in L^2$ such that
	\begin{equation*}
	v^{k_j}_1\rightharpoonup v_1 \quad \text{and} \quad v^{k_j}_2\rightharpoonup v_2
	\end{equation*}
	for some subsequence $\{v^{k_j}_i\}$. It is clear that $v_1+v_2 = v$. From the weak convergence, we have
	\begin{equation}\label{l423}
	\langle v_1, v_2\rangle = \lim_{j\rightarrow\infty}\lim_{l\rightarrow\infty}\langle v^{k_j}_1,v^{k_l}_2\rangle = 0,
	\end{equation}
	since $v^{k_j}_1$ and $v^{k_l}_2$ are mutually orthogonal for all $k_j$ and $k_l$. Similarly, we have
	\begin{equation}\label{l424}
	\langle v^{k_j}_1,v_2\rangle = 0 \quad\text{and}\quad \langle v_1, v^{k_l}_2\rangle =0.
	\end{equation}
	It follows from \eqref{l421}-\eqref{l424} that
	\begin{equation*}
	\begin{split}
	0 = \lim_{j\rightarrow\infty}\langle v^{k_j}-v,v^{k_j}-v\rangle &= \lim_{j\rightarrow\infty}\langle (v^{k_j}_1-v_1)+(v^{k_j}_2-v_2),(v^{k_j}_1-v_1)+(v^{k_j}_2-v_2)\rangle\\
	&= \lim_{j\rightarrow\infty}\lt(\lVert v^{k_j}_1-v_1\rVert_2^2 + \lVert v^{k_j}_2-v_2\rVert_2^2\rt),
	\end{split}
	\end{equation*}
	from which we immediately deduce
	\begin{equation*}
	v^{k_j}_1\rightarrow v_1 \quad \text{and} \quad v^{k_j}_2\rightarrow v_2.
	\end{equation*}
	
	Suppose that another subsequence $\{v^{m_j}_i\}$ of $\{v^{k}_i\}$, for $i=1,2$, satisfies
	\begin{equation*}
	v^{m_j}_1 \rightarrow \tilde v_1 \quad \text{and} \quad v^{m_j}_2 \rightarrow \tilde v_2
	\end{equation*}
	for some $\tilde v_1$ and $\tilde v_2$. Clearly we have $\tilde v_1 + \tilde v_2 = v$. Using a similar argument as in \eqref{l423}, we have
	\begin{equation*}
	\langle \tilde v_1,\tilde v_2 \rangle = \langle \tilde v_1, v_2\rangle = \langle v_1, \tilde v_2 \rangle = 0.
	\end{equation*}
	It follows that
	\begin{equation*}
	\begin{split}
	0 &= \langle (v_1+v_2)-(\tilde v_1+\tilde v_2), (v_1+v_2)-(\tilde v_1+\tilde v_2)\rangle\\
	& = \langle (v_1-\tilde v_1)+(v_2-\tilde v_2), (v_1-\tilde v_1)+(v_2-\tilde v_2)\rangle\\
	&= \lVert v_1-\tilde v_1 \rVert_2^2 + \lVert v_2-\tilde v_2\rVert_2^2,
	\end{split}
	\end{equation*}
	from which we have $v_1 = \tilde v_1$ and $v_2 = \tilde v_2$. By a contradiction argument, this implies that the subsequence $\{v^{k_j}_i\}$ contains all but at most finitely many terms in $\{v^k_i\}$. Therefore, we have strong $L^2$ compactness for the whole sequence $\{v^k_i\}$ for $i=1,2$. 
\end{proof}

In the following, we construct recovery sequences for $v_1$ and $v_2$ respectively, as has been done in \cite{JS}.

\subsection{Recovery sequence for $v_1$}

\begin{lemma}\label{l4.3}
	Let $v_1$ be as in Lemma \ref{l4.2}. There exists a sequence $\{u_n^{\epsilon}\}_{n=0}^{N}\subset [H^1(\Omega;\mb C)]^{N+1}$ such that the conclusions of Theorem \ref{p3.3} hold with $v$ replaced by $v_1$.
\end{lemma}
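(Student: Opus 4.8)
The plan is to recover $v_1$ using the fact that $v_1$ comes from the divergence-free part of $v$, whose curl is all of $\cl v$, and hence its Jacobian carries the full limiting vorticity $w$. Since $v_{n,1} = -2\hat\nabla\times\hat\Delta_D^{-1}w_n$, on each layer $v_{n,1}$ is exactly the current one would obtain from a solution of the $2D$ Ginzburg–Landau problem whose vortices are distributed according to $w_n$. The idea is therefore to apply the two-dimensional construction of \cite{JS} (Proposition 7.1 there) layer by layer, but with careful tracking of the dependence on $s$ so that the resulting estimates are uniform across all $N+1$ layers. Concretely, I would first regularize $w_n$ at a scale controlled by $\epsilon$ (and independent of $s$), approximate the measure $w_n\,d\hat x$ by a sum of $\pm$ Dirac masses of weight $\pi$ placed on a grid, then define $u_n^\epsilon$ to have unit modulus away from small disks around those points, with a standard degree-$\pm1$ vortex profile inside each disk and phase prescribed so that $j(u_n^\epsilon)/|\ln\epsilon| \approx v_{n,1}$.

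The key steps, in order, are as follows. First, since $v\in C^\infty(\overline D;\mb R^2)$, the functions $w_n = w(\cdot,ns)$ are smooth and uniformly bounded in $C^0(\overline\Omega)$ together with all derivatives, uniformly in $n$ and $s$; this uniformity is what \eqref{3.5} provides. Second, I would invoke the $2D$ construction of \cite{JS} applied to each $w_n$ to produce $u_n^\epsilon$ with $j(u_n^\epsilon)/|\ln\epsilon|\to v_{n,1}$ in $L^p(\Omega)$ for $p<3/2$, $J(u_n^\epsilon)/|\ln\epsilon|\rightharpoonup w_n$ in $W^{-1,p}$, and the energy bound
\begin{equation*}
\frac{E_\epsilon(u_n^\epsilon)}{|\ln\epsilon|^2}\leq \frac{\lVert v_{n,1}\rVert_{L^2(\Omega)}^2}{2}+\lVert w_n\rVert_{L^1(\Omega)} + o_\epsilon(1),
\end{equation*}
where the crucial point is that the $o_\epsilon(1)$ term and all implied constants depend only on the uniform $C^k$ bounds on $w$, hence are uniform in $n$ and $s$. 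Third, I would multiply by $s$ and sum over $n$, recognizing the resulting sums as Riemann sums: by Lemma \ref{l4.2}, $v_1^s\to v_1$ in $L^2(D)$, so $s\sum_n\lVert v_{n,1}\rVert_{L^2(\Omega)}^2 = \lVert v_1^s\rVert_{L^2(D)}^2\to\lVert v_1\rVert_{L^2(D)}^2$, and similarly $s\sum_n\lVert w_n\rVert_{L^1(\Omega)}=\lVert w^s\rVert_{L^1(D)}\to\lVert w\rVert_{L^1(D)}$ by \eqref{3.5}. This yields the energy upper bound \eqref{p3.3.3} with $v$ replaced by $v_1$. Fourth, the convergences \eqref{p3.3.1}–\eqref{p3.3.2} for $v_1^{\epsilon,s}:=|\ln\epsilon|^{-1}\sum_n j(u_n^\epsilon)\chi_n$ would follow by combining the layerwise convergence with the step-function convergence $v_1^s\to v_1$; one tests against a fixed $\varphi\in C_c(D)$, uses the uniform $L^p$ control to pass the $s$-sum to an integral, and then applies the per-layer convergence.

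The main obstacle I anticipate is making the $o_\epsilon(1)$ error terms and the vortex-placement grid uniform in $s$ while simultaneously being compatible with the discrete-to-continuous limit. In the pure $2D$ result of \cite{JS} the vortex scale and the approximation scale are chosen depending on the single target measure; here the target $w_n$ varies with $n$ and, as $s\to 0$, the number of layers $N\sim L/s\to\infty$, so I must ensure that the construction uses a scale depending only on $\epsilon$ and the uniform bounds on $w$, never on the individual layer. The delicate interplay is that I need $\epsilon$ small relative to the vortex separation on each layer, but this separation is controlled uniformly by $\lVert w\rVert_{C^0}$, so the same threshold works for all layers; still, verifying that the error in the double limit $(\epsilon,s)\to(0,0)$ genuinely vanishes — rather than accumulating over the growing number of layers — requires that each layer's error be $o_\epsilon(1)$ times a quantity whose $s$-weighted sum stays bounded (namely $\lVert w_n\rVert_{L^1}+\lVert v_{n,1}\rVert_{L^2}^2$). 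Handling this bookkeeping, and ensuring the test functions $u_n^\epsilon$ remain genuinely in $H^1(\Omega;\mb C)$ with the claimed traces, is where the real care lies; the convergence statements themselves are then relatively routine consequences of Lemma \ref{l4.2} and \eqref{3.5}.
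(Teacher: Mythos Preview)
Your proposal is correct and follows essentially the same approach as the paper: both apply the two-dimensional vortex construction of \cite{JS} layer by layer, with the key technical issue being exactly the uniformity of the $o_\epsilon(1)$ errors across the growing number of layers that you identify. The paper carries this out by stating and proving a quantitative vortex-selection lemma (Lemma~\ref{l3.5}) giving an explicit rate $\lVert w_n^\epsilon - w_n\rVert_{W^{-1,p}(\Omega)} \leq C|\ln\epsilon|^{-\alpha/4}$ uniform in $n$ and $s$, and then repeating the energy decomposition of \cite{JS} Lemma~7.2 with explicit tracking of constants---precisely the bookkeeping you anticipate needing.
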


To prove the above Lemma \ref{l4.3}, we will need a couple of auxiliary lemmas. We define $\xi(\hat x)=\frac{(x_2,-x_1)}{|\hat x|^2}$. Let $\eta:\mathbb R^2\rightarrow \mathbb R$ be a standard mollifier supported in the unit ball in $\mb R^2$ such that $\int_{\mathbb R^2}\eta=1$, and define $\eta^{\epsilon}:=\eta(\frac{x}{\epsilon})/\epsilon^2$. We define $q^{\epsilon}:\mathbb R^2\rightarrow \mathbb R$ by requiring
\begin{equation}\label{qe}
q^{\epsilon}(\hat x)\xi(\hat x) = \eta^{\epsilon}*\xi(\hat x).
\end{equation}
We have the following lemma taken from \cite{JS}, Lemma 7.3, which summarizes some useful properties of the function $q^{\epsilon}$.

\begin{lemma}[Jerrard-Soner \cite{JS}]\label{l3.4}
	The function $q^{\epsilon}$ is well-defined, smooth and radial, and has the following properties:
	\begin{equation}\label{l341}
	0\leq q^{\epsilon}\leq 1,\quad q^{\epsilon}(\hat x) = 1 \text{ whenever } |\hat x|\geq \epsilon,
	\end{equation}
	\begin{equation}\label{l342}
	q^{\epsilon}(\hat x) = q^1(\frac{\hat x}{\epsilon}),
	\end{equation}
	and
	\begin{equation*}
	\lVert q^{\epsilon}\xi \rVert_{\infty}\leq \frac{C}{\epsilon},\quad \int_{B_{\epsilon}}|q^{\epsilon}(\hat x)\xi(\hat x)|^2 d\hat x \leq C \text{ for some $C$ independent of $\epsilon$}.
	\end{equation*}
\end{lemma}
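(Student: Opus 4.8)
The plan is to reduce all the asserted properties of $q^{\epsilon}$ to a single explicit formula, namely
\[
q^{\epsilon}(\hat x) = \int_{B_{|\hat x|}} \eta^{\epsilon}(\hat y)\, d\hat y,
\]
the $\eta^{\epsilon}$-mass of the disc of radius $|\hat x|$. The starting observation is that $\xi(\hat x)=\frac{(x_2,-x_1)}{|\hat x|^2}$ is the rotated gradient of the planar fundamental solution: writing $\nabla^{\perp}=(\partial_2,-\partial_1)$, a direct check gives $\xi=\nabla^{\perp}\log|\hat x|$. Since convolution commutes with differentiation, $\eta^{\epsilon}*\xi=\nabla^{\perp}\Phi^{\epsilon}$ where $\Phi^{\epsilon}:=\eta^{\epsilon}*\log|\cdot|$. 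As a convolution of two radial functions, $\Phi^{\epsilon}$ is radial and smooth, so $\nabla\Phi^{\epsilon}=(\Phi^{\epsilon})'(r)\frac{\hat x}{r}$ and hence $\nabla^{\perp}\Phi^{\epsilon}=r(\Phi^{\epsilon})'(r)\,\xi$. This already shows that $\eta^{\epsilon}*\xi$ is a scalar multiple of $\xi$, so that $q^{\epsilon}$ is well-defined, radial, and smooth away from the origin, with $q^{\epsilon}(r)=r(\Phi^{\epsilon})'(r)$.

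To make this explicit I would use that $\log|\cdot|$ is, up to the factor $2\pi$, the fundamental solution of the Laplacian, so $\Delta\Phi^{\epsilon}=2\pi\eta^{\epsilon}$. In polar coordinates the radial Laplacian gives $\frac{1}{r}\big(r(\Phi^{\epsilon})'\big)'=2\pi\eta^{\epsilon}$, that is, $(q^{\epsilon})'(r)=2\pi r\,\eta^{\epsilon}(r)$. Since $\Phi^{\epsilon}$ is smooth and radial, $(\Phi^{\epsilon})'(0)=0$, so $q^{\epsilon}(0)=0$, and integrating yields $q^{\epsilon}(r)=2\pi\int_0^r\rho\,\eta^{\epsilon}(\rho)\,d\rho=\int_{B_r}\eta^{\epsilon}$, the claimed formula. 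Smoothness across the origin then follows because $q^{\epsilon}(r)=\pi\eta^{\epsilon}(0)r^2+O(r^4)$ is a smooth even function of $r$.

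From the formula the remaining properties are immediate. Because $\eta\geq 0$ with $\int\eta=1$, we get $0\leq q^{\epsilon}\leq 1$, and because $\eta^{\epsilon}$ is supported in $B_{\epsilon}$ we get $q^{\epsilon}\equiv 1$ on $\{|\hat x|\geq\epsilon\}$, which is \eqref{l341}. The scaling identity \eqref{l342} follows from the change of variables $\hat y=\epsilon\hat z$ in the integral. For the final two bounds I would exploit the homogeneity $\xi(\lambda\hat x)=\lambda^{-1}\xi(\hat x)$: combined with \eqref{l342} it gives $q^{\epsilon}(\hat x)\xi(\hat x)=\frac{1}{\epsilon}(q^1\xi)(\hat x/\epsilon)$, so both bounds reduce to the case $\epsilon=1$ after rescaling (the $L^{\infty}$ bound acquires the factor $1/\epsilon$, while the $L^2$ integral over $B_{\epsilon}$ is scale invariant). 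For $\epsilon=1$ one estimates $|q^1\xi|=q^1/|\hat x|$: on $\{|\hat x|\geq 1\}$ this equals $1/|\hat x|\leq 1$, while on $\{|\hat x|<1\}$ the bound $q^1(\hat x)\leq\pi\|\eta\|_{\infty}|\hat x|^2$ gives $q^1/|\hat x|\leq\pi\|\eta\|_{\infty}|\hat x|$, which is bounded and square integrable on $B_1$.

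The only genuinely delicate point is justifying that $\eta^{\epsilon}*\xi$ is a \emph{scalar} multiple of $\xi$ and that the resulting $q^{\epsilon}$ is regular at the origin. The representation $\eta^{\epsilon}*\xi=\nabla^{\perp}\Phi^{\epsilon}$ with $\Phi^{\epsilon}$ radial and smooth settles both at once, which is why I would organize the proof around it rather than around the equivariance of $\xi$ under the orthogonal group; the latter also forces the parallelism, but it makes the smoothness and the explicit formula for $q^{\epsilon}$ considerably less transparent.
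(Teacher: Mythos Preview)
Your proof is correct and self-contained. Note, however, that the paper does not supply its own proof of this lemma: it is quoted verbatim as Lemma~7.3 of Jerrard--Soner \cite{JS} and used as a black box, so there is no argument in the paper to compare yours against. Your derivation of the explicit formula $q^{\epsilon}(\hat x)=\int_{B_{|\hat x|}}\eta^{\epsilon}$ via $\Phi^{\epsilon}=\eta^{\epsilon}*\log|\cdot|$ and the radial Laplacian is the standard route, and the reductions by scaling for the $L^{\infty}$ and $L^{2}$ bounds are clean and correct.
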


In the next lemma, we select vortices for each function $u_n^{\epsilon}$. 

\begin{lemma}\label{l3.5}
	Assume $w\in C^{\infty}(\overline D)$. For all $n=0,1,\dddot\ , N-1$, there exist families of points $\{a_{n,i}^{\epsilon}\}_{i=1}^{M_n^{\epsilon}}$ and integers $\sigma_{n,i}^{\epsilon}=\pm 1$ such that
	\begin{equation}\label{l351}
	w^{\epsilon,s}:=\sum_{n=0}^{N-1}w_n^{\epsilon}\chi_n\rightharpoonup w  \text{ weakly in } \mathcal{M} \text{ and strongly in } W^{-1,p} \text{ for all } p<\frac{3}{2}
	\end{equation}
	as $(\eps,s)\rightarrow (0,0)$, where $$w_n^{\epsilon}=\frac{\pi}{|\ln\epsilon|}\sum_{i=1}^{M_n^{\epsilon}}\sigma_{n,i}^{\epsilon}\delta_{a_{n,i}^{\epsilon}}.$$
	Moreover, the points $\{a_{n,i}^{\epsilon}\}$ can be chosen such that
	\begin{equation}\label{l353}
	|a_{n,i}^{\epsilon}-a_{n,j}^{\epsilon}|\geq c_{0}|\ln\epsilon|^{-\frac{1}{2}} \quad \forall i\ne j, \quad \text{dist}(a_{n,i}^{\epsilon},\partial\Omega)\geq c_{0}|\ln\epsilon|^{-\frac{1}{2}}\quad \forall i,
	\end{equation}
	where $c_0$ is some small constant depending on $\lVert w \rVert_{\infty}$. Finally, we also have the estimate
	\begin{equation}\label{l354}
	s\sum_{n=0}^{N-1}\lVert w_n^{\eps}-w_n \rVert_{W^{-1,p}(\Omega)}^p \rightarrow 0
	\end{equation}
	as $(\epsilon,s)\rightarrow (0,0)$.
\end{lemma}

\begin{proof}
	The proof relies on selecting points on each layer $\Omega_n=\Omega\times\{ns\}$. Let 
	\begin{equation*}
	\delta_{\eps}=|\ln\eps|^{-\frac{1}{4}}.
	\end{equation*}
	For each $\eps$, we define the family of squares in $\mb R^2$
	\begin{equation*}
	\{Q_i^{\eps}\}_i = \{[k\delta_{\eps},(k+1)\delta_{\eps}]\times[l\delta_{\eps},(l+1)\delta_{\eps}]: k,l\in\mathbb Z\}.
	\end{equation*}
	Denote
	\begin{equation*}
	I_{\eps} := \{i: Q_i^{\eps}\cap\Omega\ne\emptyset\}.
	\end{equation*}
	We write $\Omega_n=\cup_i \Omega_{n,i}^{\eps}$, where $\Omega_{n,i}^{\eps}=\blue{\Omega_n\cap (Q_i^{\eps}\times\{ns\})}$ for $i\in I_{\eps}$. In each $\Omega_{n,i}^{\eps}$, we define 
	\begin{equation}\label{l355}
	M_{n,i}^{\eps}=\begin{cases}
	\lfloor\frac{|\ln\eps|}{\pi}|\int_{\Omega_{n,i}^{\eps}}w_n d\hat x|\rfloor & \text{ if } \quad\text{dist}(\Omega_{n,i}^{\eps},\partial \Omega_n)>0,\\
	0 & \text{ otherwise},
	\end{cases}
	\end{equation}
	where $\lfloor x \rfloor$ is the integer part of a real number $x$, and 
	\begin{equation*}
	\sigma_{n,i}^{\eps}= \text{sgn}(\int_{\Omega_{n,i}^{\eps}}w_n d\hat x).
	\end{equation*}
	Note that $M_{n,i}^{\eps}\leq C\|w\|_{\infty}|\ln\eps|^{\frac{1}{2}}$ for some pure constant $C$. Therefore, in each $\Omega_{n,i}^{\eps}$, it is possible to place $M_{n,i}^{\eps}$ points $\{a_{n,ij}^{\eps}\}$ evenly such that \eqref{l353} is satisfied. Now we define
	\begin{equation*}
	w_n^{\epsilon}=\frac{\pi}{|\ln\epsilon|}\sum_i\sum_j\sigma_{n,i}^{\epsilon}\delta_{a_{n,ij}^{\epsilon}}.
	\end{equation*}
	Upon relabeling of $\{a_{n,ij}^{\eps}\}$, the above defined $w_n^{\epsilon}$ has the desired form.
	
	Next we verify \eqref{l351}. Note that, by the definition of $M_{n,i}^{\eps}$ in \eqref{l355}, we have
	\begin{equation*}
	M_n^{\eps}:=\sum_i M_{n,i}^{\eps} \leq \frac{|\ln\eps|}{\pi}\| w_n \|_{L^1(\Omega)}.
	\end{equation*}
	It follows from \eqref{3.5} that
	\begin{equation}\label{l357}
	|w^{\eps,s}|(D)\leq s\sum_{n=0}^{N-1}\frac{\pi}{|\ln\epsilon|}M_n^{\eps}\leq s\sum_{n=0}^{N-1} \| w_n \|_{L^1(\Omega)} \rightarrow \| w \|_{L^1(D)}.
	\end{equation}
	Therefore, it suffices to prove strong convergence of $w^{\eps,s}$ in $W^{-1,p}$ for all $p<\frac{3}{2}$. Fix some $p<\frac{3}{2}$ and take a test function $\phi\in W_0^{1,p'}(D)$ with $\|\phi\|_{W^{1,p'}(D)}\leq 1$, where $\frac{1}{p}+\frac{1}{p'}=1$. It is clear that $p'>3$. By the Sobolev embedding theorem, we have $\phi\in C^{0,\alpha}(D)$ for some $\alpha>0$ and $\|\phi\|_{C^{0,\alpha}(D)}\leq C$ for some constant $C$ depending on $p'$. (See, e.g., \cite{GT}.) It suffices to verify
	\begin{equation}\label{l3513}
	\sup_{\|\phi\|_{C^{0,\alpha}(D)}\leq 1} \left| \int_{D}\phi dw^{\eps,s} - \int_{D}\phi w^s dx \right| \rightarrow 0
	\end{equation}
	as $(\eps,s)\rightarrow (0,0)$. By the definitions of $w^{\eps,s}$ and $w^s$, we have
	\begin{equation}\label{l3512}
	\begin{split}
	\bigg| \int_{D}\phi dw^{\eps,s} &- \int_{D}\phi w^s dx \bigg|\\
	&= \left|\sum_{n=0}^{N-1}\int_{ns}^{(n+1)s}\left[ \frac{\pi}{|\ln\epsilon|}\sum_{i=1}^{M_n^{\epsilon}}\sigma_{n,i}^{\epsilon}\phi(a_{n,i}^{\epsilon}) - \int_{\Omega}\phi w_n d\hat x\right] dx_3 \right|\\
	&\leq\sum_{n=0}^{N-1}\int_{ns}^{(n+1)s}\sum_{j}\left| \frac{\pi}{|\ln\epsilon|}\sum_{i=1}^{M_{n,j}^{\epsilon}}\sigma_{n,i}^{\epsilon}\phi(a_{n,i}^{\epsilon}) - \int_{\Omega_{n,j}^{\eps}}\phi w_n d\hat x\right| dx_3 ,
	\end{split}
	\end{equation}
	where we use the convention that $\frac{\pi}{|\ln\epsilon|}\sum_{i=1}^{M_{n,j}^{\epsilon}}\sigma_{n,i}^{\epsilon}\phi(a_{n,i}^{\epsilon})=0$ if $M_{n,j}^{\eps}=0$. 
	
	If $\text{dist}(\Omega_{n,j}^{\eps},\partial \Omega_n)>0$ and $\text{sgn}(\int_{\Omega_{n,j}^{\eps}}w_n d\hat x)=1$, then we have
	\begin{equation*}
	\begin{split}
	&\left|\frac{\pi}{|\ln\epsilon|}\sum_{i=1}^{M_{n,j}^{\epsilon}}\sigma_{n,i}^{\epsilon}\phi(a_{n,i}^{\epsilon}) - \int_{\Omega_{n,j}^{\eps}}\phi w_n d\hat x\right| \\
	\leq&\sum_{i=1}^{M_{n,j}^{\epsilon}}|\phi(a_{n,i}^{\epsilon})|\left|\frac{\pi}{|\ln\epsilon|}-\frac{1}{M_{n,j}^{\epsilon}}\int_{\Omega_{n,j}^{\eps}} w_n d\hat x\right| + \frac{1}{M_{n,j}^{\epsilon}}\sum_{i=1}^{M_{n,j}^{\epsilon}}\int_{\Omega_{n,j}^{\eps}}\big|\phi(a_{n,i}^{\eps})-\phi\big| |w_n| d\hat x.
	\end{split}
	\end{equation*}
	Since $\|\phi\|_{C^{0,\alpha}(D)}\leq 1$, using the definition of $M_{n,j}^{\eps}$ in \eqref{l355}, we have
	\begin{equation*}
	\sum_{i=1}^{M_{n,j}^{\epsilon}}|\phi(a_{n,i}^{\epsilon})|\left|\frac{\pi}{|\ln\epsilon|}-\frac{1}{M_{n,j}^{\epsilon}}\int_{\Omega_{n,j}^{\eps}} w_n d\hat x\right|\leq M_{n,j}^{\epsilon}\left|\frac{\pi}{|\ln\epsilon|}-\frac{1}{M_{n,j}^{\epsilon}}\int_{\Omega_{n,j}^{\eps}} w_n d\hat x\right|\leq \frac{\pi}{|\ln\eps|},
	\end{equation*}
	and
	\begin{equation*}
	\frac{1}{M_{n,j}^{\epsilon}}\sum_{i=1}^{M_{n,j}^{\epsilon}}\int_{\Omega_{n,j}^{\eps}}\big|\phi(a_{n,i}^{\eps})-\phi\big| |w_n| d\hat x\leq \|w\|_{\infty}|\ln\eps|^{-\frac{\alpha}{4}-\frac{1}{2}}.
	\end{equation*}
	Hence, we have
	\begin{equation}\label{l358}
	\left|\frac{\pi}{|\ln\epsilon|}\sum_{i=1}^{M_{n,j}^{\epsilon}}\sigma_{n,i}^{\epsilon}\phi(a_{n,i}^{\epsilon}) - \int_{\Omega_{n,j}^{\eps}}\phi w_n d\hat x\right|\leq C|\ln\eps|^{-\frac{\alpha}{4}-\frac{1}{2}}
	\end{equation}
	for some constant $C$ depending on $\|w\|_{\infty}$. The cases when $\text{dist}(\Omega_{n,j}^{\eps},\partial \Omega_n)>0$ and $\text{sgn}(\int_{\Omega_{n,j}^{\eps}}w_n d\hat x)=-1$ or $0$ are similar. If $\text{dist}(\Omega_{n,j}^{\eps},\partial \Omega_n)=0$, then we have $w_n^{\eps}=0$ on $\Omega_{n,j}^{\eps}$, from which we deduce
	\begin{equation}\label{l3510}
	\left|\int_{\Omega_{n,j}^{\eps}}\phi d w_n^{\eps} - \int_{\Omega_{n,j}^{\eps}}\phi w_n d\hat x\right|=\left|\int_{\Omega_{n,j}^{\eps}}\phi w_n d\hat x\right|\leq \|w\|_{\infty}|\ln\eps|^{-\frac{1}{2}}.
	\end{equation}
	Since $\text{Card}\{j:\text{dist}(\Omega_{n,j}^{\eps},\partial \Omega_n)>0\}\leq C|\ln\eps|^{\frac{1}{2}}$ and $\text{Card}\{j:\text{dist}(\Omega_{n,j}^{\eps},\partial \Omega_n)=0\}\leq C|\ln\eps|^{\frac{1}{4}}$ for some constant $C$ depending on $\Omega$, we deduce from \eqref{l358}-\eqref{l3510} that
	\begin{equation}\label{l3511}
	\left|\int_{\Omega}\phi d w_n^{\eps} - \int_{\Omega}\phi w_n d\hat x\right|\leq C|\ln\eps|^{-\frac{\alpha}{4}}
	\end{equation}
	for some constant $C$ depending on $\Omega$ and $\|w\|_{\infty}$. By plugging \eqref{l3511} into \eqref{l3512}, and noting that $sN=L$ is fixed, we immediately obtain \eqref{l3513}, from which \eqref{l351} follows.
	
	Finally, to verify \eqref{l354}, note that, from \eqref{l3511}, one has
	\begin{equation*}
	\lVert w_n^{\eps}-w_n \rVert_{W^{-1,p}(\Omega)}\leq C|\ln\eps|^{-\frac{\alpha}{4}}.
	\end{equation*}
	Therefore, we have
	\begin{equation*}
	s\sum_{n=0}^{N-1}\lVert w_n^{\eps}-w_n \rVert_{W^{-1,p}(\Omega)}^p \leq sNC|\ln\eps|^{-\frac{p\alpha}{4}}=\tilde C|\ln\eps|^{-\frac{p\alpha}{4}}\rightarrow 0
	\end{equation*}
	as $\eps\rightarrow 0$, where $\tilde C=sNC$ depends only on $D$ and $\|w\|_{\infty}$.
\end{proof}

Now we provide the proof of Lemma \ref{l4.3}. We follow the ideas in the proof of Lemma 7.2 in \cite{JS}, with some technical modifications to take care of the limiting process as $s\rightarrow 0$. 

\begin{proof}[Proof of Lemma \ref{l4.3}]
	Fix a pair $(\epsilon,s)$. For each $n=0,1,\dddot\ ,N-1$, we have $w_n^{\epsilon}$ from Lemma \ref{l3.5}. Define $v_n^{\epsilon}=-2\hat{\nabla}\times\hat\Delta_{D}^{-1}w_n^{\epsilon}$. Next we define $\phi_n^{\epsilon}$ to be a multivalued function satisfying $\hat{\nabla}\phi_n^{\epsilon}=|\ln\epsilon|v_n^{\epsilon}$. It is easy to see that $\phi_n^{\epsilon}$ is well-defined modulo $2\pi$. Finally define
	\begin{equation*}
	u_n^{\epsilon}:=\rho_n^{\epsilon}e^{i\phi_n^{\epsilon}},\quad\quad \rho_n^{\epsilon}:=\prod q^{\epsilon}(\hat x-a_{n,i}^{\epsilon}),
	\end{equation*}
	where $q^{\epsilon}$ is defined in \eqref{qe} with properties summarized in Lemma \ref{l3.4}. In addition, we can set $u_N^{\eps}\equiv 1$. We show that the above defined functions $\{u_n^{\epsilon}\}$ satisfy the properties \eqref{p3.3.1}-\eqref{p3.3.3} with $v$ replaced by $v_1$.
	
	\textit{Step 1. Compactness:} For the above defined $u_n^{\epsilon}$, by direct calculations and the definition of $\phi^{\eps}_n$, we have
	\begin{equation*}
	j(u_n^{\epsilon})=(\imath u_n^{\eps},\hat{\nabla}u_n^{\eps})=|\rho_n^{\epsilon}|^2\hat{\nabla}\phi_n^{\epsilon}=|\ln\eps||\rho_n^{\eps}|^2v_n^{\eps},
	\end{equation*}
	from which we have
	\begin{equation}\label{p338}
	v^{\eps,s}=\frac{1}{|\ln\epsilon|}\sum_{n=0}^{N-1}j(u_n^{\epsilon})\chi_n=\sum_{n=0}^{N-1}|\rho_n^{\eps}|^2v_n^{\eps}\chi_n.
	\end{equation}
	Note that, by the definition of $\rho_n^{\eps}$ and the properties of $q^{\eps}$ in \eqref{l341}, we have
	\begin{equation*}
	\int_{\Omega}\lt(1-|\rho_n^{\eps}|^q\rt)^rd\hat x\leq \sum_{i=1}^{M_n^{\eps}}|B_{\eps}(a_{n,i}^{\eps})|=CM_n^{\eps}\eps^2.
	\end{equation*}
	It follows from \eqref{l357} that
	\begin{equation}\label{p334}
	s\sum_{n=0}^{N-1}\lt\lVert 1-|\rho_n^{\eps}|^q\rt\rVert_{L^r(\Omega)}^r\rightarrow 0
	\end{equation}
	for all $1\leq r <\infty$ and $0<q<\infty$ as $(\eps,s)\rightarrow (0,0)$. Recall that for all $n$, one can write $v_{n,1}=-2\hat{\nabla}\times\hat\Delta^{-1}_D w_n$, where $v_{n,1}$ is given in \eqref{vn}. Given $p<\frac{3}{2}$, we have
	\begin{equation*}
	\begin{split}
	\lt\lVert \sum_{n=0}^{N-1}(v_n^{\eps}-v_{n,1})\chi_n \rt\rVert_{L^p(D)}^p &= s\sum_{n=0}^{N-1}\lt\lVert v_n^{\eps}-v_{n,1} \rt\rVert_{L^p(\Omega)}^p\\
	&=s\sum_{n=0}^{N-1}\lt\lVert -2\hat{\nabla}\times\hat\Delta_D^{-1}(w_n^{\eps}-w_n) \rt\rVert_{L^p(\Omega)}^p.
	\end{split}
	\end{equation*}
	It follows from standard elliptic estimates and \eqref{l354} that
	\begin{equation}\label{p335}
	\lt\lVert \sum_{n=0}^{N-1}(v_n^{\eps}-v_{n,1})\chi_n \rt\rVert_{L^p(D)}^p \leq sC\sum_{n=0}^{N-1}\lt\lVert w_n^{\eps}-w_n \rt\rVert_{W^{-1,p}(\Omega)}^p\rightarrow 0
	\end{equation}
	as $(\epsilon,s)\rightarrow (0,0)$, where $C$ is a constant depending only on $\Omega$ and $p$. It is clear from Lemma \ref{l4.2} that
	\begin{equation*}
	\lt\lVert \sum_{n=0}^{N-1}v_{n,1}\chi_n -v_1\rt\rVert_{L^p(D)}\rightarrow 0
	\end{equation*}
	as $s\rightarrow 0$. Therefore we conclude that
	\begin{equation}\label{p337}
	\lt\lVert \sum_{n=0}^{N-1}v_n^{\eps}\chi_n -v_1\rt\rVert_{L^p(D)} \rightarrow 0.
	\end{equation}
	For any $p<r<\frac{3}{2}$, using H\"{o}lder's inequality, we have
	\begin{equation*}
	\lt\lVert \sum_{n=0}^{N-1}\lt(|\rho_n^{\epsilon}|^2-1\rt)v_n^{\eps}\chi_n \rt\rVert_{L^p(D)} \leq \lt\lVert\sum_{n=0}^{N-1}\lt(|\rho_n^{\epsilon}|^2-1\rt)\chi_n \rt\rVert_{L^{\tilde r}(D)}\  \lt\lVert\sum_{n=0}^{N-1}v_n^{\eps}\chi_n \rt\rVert_{L^r(D)},
	\end{equation*}
	where $\frac{1}{r}+\frac{1}{\tilde r}=\frac{1}{p}$. We deduce from \eqref{p334} and \eqref{p337} that
	\begin{equation}\label{p339}
	\lt\lVert \sum_{n=0}^{N-1}\lt(|\rho_n^{\epsilon}|^2-1\rt)v_n^{\eps}\chi_n \rt\rVert_{L^p(D)} \rightarrow 0
	\end{equation}
	as $(\epsilon,s)\rightarrow (0,0)$. Hence \eqref{p3.3.1} follows from \eqref{p338}, \eqref{p337} and \eqref{p339}, and \eqref{p3.3.2} is a direct consequence.
	
	\textit{Step 2. Decomposition of the energy:} By the definition of $u_n^{\eps}$, we have
	\begin{equation*}
	|\hat{\nabla}u_n^{\eps}|^2=|\hat\nabla \rho_n^{\eps}|^2 + (\rho_n^{\eps})^2|\hat\nabla\phi_n^{\eps}|^2 = |\hat\nabla \rho_n^{\eps}|^2 + |\ln\epsilon|^2 (\rho_n^{\eps})^2 |v_n^{\eps}|^2.
	\end{equation*}
	Therefore
	\begin{equation*}
	s\sum_{n=0}^{N-1}E_{\epsilon}(u_n^{\eps})=\frac{s}{2}\sum_{n=0}^{N-1}\int_{\Omega}\left[|\hat\nabla \rho_n^{\eps}|^2 + |\ln\epsilon|^2 (\rho_n^{\eps})^2 |v_n^{\eps}|^2+\frac{\lt(1-(\rho_n^{\eps})^2\rt)^2}{2\eps^2}\right] d\hat x.
	\end{equation*}
	Using the definition of $\rho_n^{\eps}$, \eqref{l341}-\eqref{l342} and \eqref{l357}, we have
	\begin{equation*}
	\begin{split}
	s\sum_{n=0}^{N-1}\int_{\Omega}\left[|\hat\nabla \rho_n^{\eps}|^2 +\frac{|1-(\rho_n^{\eps})^2|^2}{2\eps^2}\right] d\hat x &= s\sum_{n=0}^{N-1}\int_{\cup B_{\eps}(a_{n,i}^{\eps})}\left[|\hat\nabla \rho_n^{\eps}|^2 +\frac{|1-(\rho_n^{\eps})^2|^2}{2\eps^2}\right] d\hat x\\
	&\leq Cs\sum_{n=0}^{N-1} M_n^{\eps} \leq C|\ln\epsilon|\lVert w \rVert_{L^1(D)}.
	\end{split}
	\end{equation*}
	Therefore, we have
	\begin{equation}\label{p3311}
	\frac{s}{|\ln\epsilon|^2}\sum_{n=0}^{N-1}E_{\epsilon}(u_n^{\eps}) = \frac{s}{2}\sum_{n=0}^{N-1}\int_{\Omega}(\rho_n^{\eps})^2 |v_n^{\eps}|^2 d\hat x + o_{\eps}(1),
	\end{equation}
	where $\lim_{\eps\rightarrow 0}o_{\eps}(1)=0$. Let us write
	\begin{equation}\label{p3312}
	\begin{split}
	\frac{s}{2}\sum_{n=0}^{N-1}\int_{\Omega}(\rho_n^{\eps})^2 |v_n^{\eps}|^2 d\hat x = &\frac{s}{2}\sum_{n=0}^{N-1}\int_{\Omega}|v_{n,1}|^2 d\hat x + s\sum_{n=0}^{N-1}\int_{\Omega}v_{n,1}\cdot\lt(\rho_n^{\eps}v_n^{\eps}-v_{n,1}\rt) d\hat x\\
	& + \frac{s}{2}\sum_{n=0}^{N-1}\int_{\Omega}\lt|\rho_n^{\eps}v_n^{\eps}-v_{n,1}\rt|^2 d\hat x.
	\end{split}
	\end{equation}
	By Lemma \ref{l4.2}, we have
	\begin{equation}\label{p3313}
	\frac{s}{2}\sum_{n=0}^{N-1}\int_{\Omega}|v_{n,1}|^2 d\hat x \rightarrow \frac{\lVert v_1 \rVert_{L^2(D)}^2}{2}
	\end{equation}
	as $s\rightarrow 0$. By \eqref{p335} and a similar argument as in \eqref{p339}, one can show
	\begin{equation}\label{p3318}
	\sum_{n=0}^{N-1}\lt(\rho_n^{\eps}v_n^{\eps}-v_{n,1}\rt)\chi_n \rightarrow 0 \quad \text{ in } L^p(D;\mb R^2) \blue{\text{ for all } p<\frac{3}{2}}.
	\end{equation}
	In the following we show that 
	\begin{equation}\label{p3315}
	\frac{s}{2}\sum_{n=0}^{N-1}\int_{\Omega}|\rho_n^{\eps}v_n^{\eps}-v_{n,1}|^2 d\hat x \leq \lVert w \rVert_{L^1(D)} + o_{\eps,s}(1).
	\end{equation}
	The above implies that $\{\sum(\rho_n^{\eps}v_n^{\eps}-v_{n,1})\chi_n\}$ forms a bounded sequence in $L^2(D;\mb R^2)$. It follows from this and \eqref{p3318} that, up to a subsequence, $\sum\lt(\rho_n^{\eps}v_n^{\eps}-v_{n,1}\rt)\chi_n$ converges weakly to $0$ in $L^2$. Therefore we deduce from Lemma \ref{l4.2} that
	\begin{equation}\label{p3314}
	s\sum_{n=0}^{N-1}\int_{\Omega}v_{n,1}\cdot(\rho_n^{\eps}v_n^{\eps}-v_{n,1}) d\hat x \rightarrow 0.
	\end{equation}
	The upper bound \eqref{p3.3.3} then follows from \eqref{p3311}-\eqref{p3314}.
	
	\textit{Step 3. Proof of \eqref{p3315}:} As in the proof of Lemma 7.2 in \cite{JS}, we define
	\begin{equation*}
	\delta = \delta(\eps) = c_0|\ln\epsilon|^{-\frac{1}{2}}/3,
	\end{equation*}
	where $c_0$ is the constant in \eqref{l353}. Recall that we defined the function $\eta:\mathbb R^2\rightarrow \mathbb R$ to be the standard mollifier with $\int_{\mb R^2}\eta =1$. Using \blue{Young's inequality}, we have
	\begin{equation}\label{p3320}
	\begin{split}
	&s\sum_{n=0}^{N-1}\lVert \rho_n^{\eps}v_n^{\eps}-v_{n,1} \rVert_{L^2(\Omega)}^2 \\
	&\leq s\sum_{n=0}^{N-1}\bigg(2\lt(1+\frac{1}{\sigma}\rt)\lVert \rho_n^{\eps}v_n^{\eps}-\eta^{\eps}*v_n^{\eps} \rVert_{L^2(\Omega)}^2+\lt(1+\sigma\rt)\lVert \eta^{\eps}*v_n^{\eps}-\eta^{\delta}*v_n^{\eps} \rVert_{L^2(\Omega)}^2\\
	&\quad\quad\quad\quad\quad\quad\quad\quad\quad\quad\quad\quad\quad\quad\quad\quad\quad\quad+2\lt(1+\frac{1}{\sigma}\rt)\lVert \eta^{\delta}*v_n^{\eps}-v_{n,1} \rVert_{L^2(\Omega)}^2 \bigg)\\
	&= \blue{s\sum_{n=0}^{N-1}\left( 2\lt(1+\frac{1}{\sigma}\rt)A_n^{\eps}+\lt(1+\sigma\rt)B_n^{\eps}+2\lt(1+\frac{1}{\sigma}\rt)C_n^{\eps} \right)}
	\end{split}
	\end{equation}
	\blue{for all $\sigma>0$.} Here we use the convention that $\eta^{r}*v_n^{\eps}=v_n^{\eps}$ if $\text{dist}(\hat x,\partial\Omega)<r$.
	
	According to (7.17) in \cite{JS}, we have $\lVert \eta^{\delta}*v_n^{\eps} \rVert_{W^{1,q}(\Omega)} \leq C$ for all $q<\infty$ and for some constant $C$ independent of $\epsilon$ and $s$. Using the fact that $sN=L$ is fixed, we obtain
	\begin{equation}\label{p3321}
	\lt\lVert\sum_{n=0}^{N-1}(\eta^{\delta}*v_n^{\eps})\chi_n \rt\rVert_{L^q(D)}^q = s\sum_{n=0}^{N-1}\lVert\eta^{\delta}*v_n^{\eps}\rVert_{L^q(\Omega)}^q \leq C
	\end{equation}
	for some constant $C$ independent of $\eps$ and $s$. Since $w=\frac{1}{2}\cl v\in C^{\infty}(\overline D)$, we have $ w_n\in L^{\infty}(\Omega)$. Recall that $v_{n,1}=-2\hat{\nabla}\times\hat\Delta^{-1}_D w_n$. It follows from global elliptic regularity that $\lVert v_{n,1}\rVert_{W^{1,q}(\Omega)}\leq C$ for all $q<\infty$ and for some constant $C$ depending only on $\lVert w\rVert_{\infty}$, $\Omega$ and $q$. Therefore, we have
	\begin{equation}\label{p3322}
	\lt\lVert\sum_{n=0}^{N-1}v_{n,1}\chi_n \rt\rVert_{L^q(D)}^q\leq C.
	\end{equation}
	Given $p<\frac{3}{2}$, we have
	\begin{equation*}
	\begin{split}
	\bigg\lVert\sum_{n=0}^{N-1}&(\eta^{\delta}*v_n^{\eps}-v_{n,1})\chi_n \bigg\rVert_{L^p(D)}\\
	&\leq \lt\lVert\sum_{n=0}^{N-1}(\eta^{\delta}*v_n^{\eps}-v_n^{\eps})\chi_n \rt\rVert_{L^p(D)}+\lt\lVert\sum_{n=0}^{N-1}(v_n^{\eps}-v_{n,1})\chi_n \rt\rVert_{L^p(D)}.
	\end{split}
	\end{equation*}
	The second term in the above right-hand side converges to zero by \eqref{p335}. The first term also converges to zero by approximation to identity along with the compactness \eqref{p335}. Hence we have
	\begin{equation*}
	\lt\lVert\sum_{n=0}^{N-1}(\eta^{\delta}*v_n^{\eps}-v_{n,1})\chi_n \rt\rVert_{L^p(D)} \rightarrow 0.
	\end{equation*}
	Using \eqref{p3321} and \eqref{p3322}, it is clear that $\{\sum(\eta^{\delta}*v_n^{\eps}-v_{n,1})\chi_n\}$ is bounded in $L^{q}(D;\mb R^2)$ for all $q<\infty$. Therefore, we deduce from an interpolation inequality that
	\begin{equation}\label{p3316}
	s\sum_{n=0}^{N-1}C_n^{\eps}=\lt\lVert\sum_{n=0}^{N-1}(\eta^{\delta}*v_n^{\eps}-v_{n,1})\chi_n \rt\rVert_{L^2(D)}^2 \rightarrow 0.
	\end{equation}
	
	From the proof of Lemma 7.2 in \cite{JS}, we have
	\begin{equation*}
	B_n^{\eps}\leq \blue{2}\pi\frac{M_n^{\eps}}{|\ln\eps|^2}\left(\ln\left(\frac{\delta}{\eps}\right) +C\right),
	\end{equation*}
	and
	\begin{equation*}
	A_n^{\eps}\leq C\int_{\Omega}\lt(1-\rho_n^{\eps}\rt)^2\left(|\eta^{\delta}*v_n^{\eps}|^2+C|\ln\eps|^{-1}\right)d\hat x,
	\end{equation*}
	where the above constants $C$ are independent of $\eps$ and $s$. (Note that the corresponding estimate for $B_n^{\eps}$ in \cite{JS} is off by a factor of $2$. However, this is not serious. The arguments there can be corrected by using Young's inequality as we have in (\ref{p3320}) and letting $\sigma\rightarrow 0$.) Therefore, noting that $\ln\delta<0$ and using \eqref{l357}, we have
	\begin{equation}\label{p3317}
	\begin{split}
	s\sum_{n=0}^{N-1}B_n^{\eps}&\leq s\sum_{n=0}^{N-1}\blue{2}\pi\frac{M_n^{\eps}}{|\ln\eps|^2}\left(\ln\left(\frac{\delta}{\eps}\right) +C\right)\\
	&\leq s\sum_{n=0}^{N-1}\blue{2}\pi\frac{M_n^{\eps}}{|\ln\eps|^2}\left(|\ln\eps| +C\right) \leq \blue{2}\lVert w\rVert_{L^1(D)}+o_{\eps,s}(1).
	\end{split}
	\end{equation}
	Using H\"{o}lder's and the Cauchy-Schwarz inequalities, we have
	\begin{equation*}
	\begin{split}
	s\sum_{n=0}^{N-1}A_n^{\eps}&\leq C\left(s\sum_{n=0}^{N-1}\int_{\Omega}(1-\rho_n^{\eps})^4d\hat x\right)^{\frac{1}{2}}\left(s\sum_{n=0}^{N-1}\int_{\Omega}|\eta^{\delta}*v_n^{\eps}|^4d\hat x\right)^{\frac{1}{2}}\\
	& + \frac{C}{|\ln\eps|}s\sum_{n=0}^{N-1}\int_{\Omega}(1-\rho_n^{\eps})^2 d\hat x.
	\end{split}
	\end{equation*}
	We deduce from \eqref{p334} and \eqref{p3321} that
	\begin{equation}\label{p3319}
	s\sum_{n=0}^{N-1}A_n^{\eps} \rightarrow 0.
	\end{equation}
	\blue{Finally, putting \eqref{p3316}-\eqref{p3319} into \eqref{p3320}, and first letting $(\epsilon,s)\rightarrow(0,0)$ and then letting $\sigma\rightarrow 0$, we obtain \eqref{p3315}.}
\end{proof}

\subsection{Proof of Theorem \ref{p3.3} completed}

\begin{lemma}\label{l4.6}
	Let $v_2$ be as in Lemma \ref{l4.2}. There exists a sequence $\{u_n^{\epsilon}\}_{n=0}^{N}\subset [H^1(\Omega;\mb C)]^{N+1}$ such that
	\begin{equation}\label{l4.6.1}
	\frac{1}{|\ln\epsilon|}\sum_{n=0}^{N-1}j(u_n^{\epsilon})\chi_n \rightarrow v_2 \quad\text{in } L^2(D;\mathbb R^2)
	\end{equation}
	and
	\begin{equation}\label{l4.6.2}
	s\sum_{n=0}^{N}\frac{E_{\epsilon}(u_n^{\epsilon})}{|\ln\epsilon|^2}\rightarrow \frac{\lVert v_2\rVert_{L^2(D)}^2}{2}
	\end{equation}
	as $(\eps,s)\rightarrow (0,0)$.
\end{lemma}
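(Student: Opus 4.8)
The plan is to exploit the fact that $v_2$ is the curl-free part of the Helmholtz decomposition, so that the recovery sequence needs no vortices at all and can be written down explicitly. Since each $v_{n,2}$ lies in $\mathscr G$, I would write $v_{n,2}=\hat\nabla g_n$ for some $g_n\in H^1(\Omega)$; because $\Omega$ is simply connected, $g_n$ is a genuine single-valued function, unique up to an additive constant. I would then set
$$u_n^{\eps}:=e^{\imath|\ln\eps|g_n}\quad(0\le n\le N-1),\qquad u_N^{\eps}:=1.$$
Since $v_n=v(\cdot,ns)$ is smooth and $v_{n,1}=-2\hat\nabla\times\hat\Delta_D^{-1}w_n$ is smooth by elliptic regularity, $g_n$ is smooth; in any case $|u_n^{\eps}|\equiv 1$ together with $g_n\in H^1(\Omega)$ yields $u_n^{\eps}\in H^1(\Omega;\mb C)$, so the configuration is admissible.

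For the current, a direct computation using $|u_n^{\eps}|\equiv 1$ gives $j(u_n^{\eps})=(\imath u_n^{\eps},\hat\nabla u_n^{\eps})=|u_n^{\eps}|^2\,\hat\nabla(|\ln\eps|g_n)=|\ln\eps|\,v_{n,2}$, and hence
$$\frac{1}{|\ln\eps|}\sum_{n=0}^{N-1}j(u_n^{\eps})\chi_n=\sum_{n=0}^{N-1}v_{n,2}\chi_n=v_2^s.$$
Then \eqref{l4.6.1} follows immediately from the strong $L^2$ convergence $v_2^s\to v_2$ established in Lemma \ref{l4.2}.

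For the energy, $|u_n^{\eps}|\equiv 1$ annihilates the potential term in $E_{\eps}$, while $|\hat\nabla u_n^{\eps}|^2=|\ln\eps|^2|\hat\nabla g_n|^2=|\ln\eps|^2|v_{n,2}|^2$; thus $E_{\eps}(u_n^{\eps})=\tfrac{|\ln\eps|^2}{2}\lVert v_{n,2}\rVert_{L^2(\Omega)}^2$ for $n\le N-1$ and $E_{\eps}(u_N^{\eps})=0$. Summing,
$$s\sum_{n=0}^{N}\frac{E_{\eps}(u_n^{\eps})}{|\ln\eps|^2}=\frac{s}{2}\sum_{n=0}^{N-1}\lVert v_{n,2}\rVert_{L^2(\Omega)}^2=\frac{1}{2}\lVert v_2^s\rVert_{L^2(D)}^2,$$
which converges to $\tfrac12\lVert v_2\rVert_{L^2(D)}^2$ by Lemma \ref{l4.2}, giving \eqref{l4.6.2}.

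I do not expect any serious obstacle here: this is the ``gradient half'' of the construction and, in contrast to the vortex part treated in Lemma \ref{l4.3}, it requires neither the covering balls of Proposition 3.3 nor the point-selection of Lemma \ref{l3.5}. The only points deserving a word of care are the single-valuedness of $g_n$, which is where the simple connectivity of $\Omega$ enters (ensuring $u_n^{\eps}$ is well defined), and the elementary bookkeeping that the extra $n=N$ term carries zero energy, so that the sum up to $N$ agrees in the limit with the sum up to $N-1$. Every convergence is then an immediate consequence of Lemma \ref{l4.2}.
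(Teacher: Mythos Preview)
Your proposal is correct and follows essentially the same approach as the paper: define $u_n^{\eps}=e^{\imath|\ln\eps|g_n}$ with $v_{n,2}=\hat\nabla g_n$, compute $j(u_n^{\eps})=|\ln\eps|v_{n,2}$ and $E_{\eps}(u_n^{\eps})=\tfrac{|\ln\eps|^2}{2}\lVert v_{n,2}\rVert_{L^2(\Omega)}^2$, and invoke Lemma~\ref{l4.2} for both convergences. Your version is in fact slightly more careful than the paper's in explicitly setting $u_N^{\eps}:=1$ and noting that this term contributes zero energy.
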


\begin{proof}
	Since $v_{n,2}\in\mathscr G$ for all $n$, we have $v_{n,2} = \hat{\nabla}g_n$ for some $g_n\in H^1(\Omega)$ and $\cl v_{n,2}=0$. Define $u_n^{\eps} = e^{i|\ln\eps|g_n}$. By simple calculations, we have
	\begin{equation}\label{l462}
	|\hat{\nabla} u_n^{\eps}|^2 = |\ln\eps|^2|\hat{\nabla} g_n|^2 = |\ln\eps|^2|v_{n,2}|^2
	\end{equation}
	and
	\begin{equation}\label{l461}
	j(u_n^\eps) = |\ln\eps|v_{n,2}.
	\end{equation}
	It follows from Lemma \ref{l4.2} that
	\begin{equation*}
	\frac{1}{|\ln\epsilon|}\sum_{n=0}^{N-1}j(u_n^{\epsilon})\chi_n = v^s_2\rightarrow v_2 \quad\text{in } L^2(D;\mathbb R^2).
	\end{equation*}
	Using \eqref{l462} and the fact that $|u_n^{\eps}|=1$, we have
	\begin{equation*}
	E_{\eps}(u_n^{\eps}) = \frac{|\ln\eps|^2}{2}\lVert v_{n,2} \rVert_{L^2(\Omega)}^2.
	\end{equation*}
	It follows from Lemma \ref{l4.2} again that \eqref{l4.6.2} holds.
\end{proof}

\begin{proof}[Proof of Theorem \ref{p3.3}]
	Let $\{u_{n,1}^{\eps}\}$ and $\{u_{n,2}^{\eps}\}$ be the sequences in Lemmas \ref{l4.3} and \ref{l4.6}, respectively. Define $u_n^{\eps}=u_{n,1}^{\eps}u_{n,2}^{\eps}$. We show that the sequence $\{u_n^{\eps}\}$ satisfies the conclusions of Theorem \ref{p3.3}.
	
	First we show compactness. By direct calculations, we have $j(u_n^{\eps}) = j(u_{n,1}^{\eps}) + j(u_{n,2}^{\eps})|u_{n,1}^{\eps}|^2$. By Lemma \ref{l4.3}, we have
	\begin{equation}\label{l481}
	\frac{1}{|\ln\epsilon|}\sum_{n=0}^{N-1}j(u_{n,1}^{\epsilon})\chi_n\rightarrow v_1 \quad\text{in } L^p(D;\mathbb R^2) \text{ for all } p<\frac{3}{2}.
	\end{equation}
	Using \eqref{l461}, we have
	\begin{equation*}
	\frac{1}{|\ln\epsilon|}\sum_{n=0}^{N-1}j(u_{n,2}^{\epsilon})|u_{n,1}^{\eps}|^2\chi_n - \sum_{n=0}^{N-1}v_{n,2}\chi_n=\sum_{n=0}^{N-1} v_{n,2}(|u_{n,1}^{\eps}|^2-1)\chi_n.
	\end{equation*}
	Therefore, given $p<\frac{3}{2}$, using H\"{o}lder's inequality, we obtain
	\begin{equation*}
	\begin{split}
	\bigg\lVert\frac{1}{|\ln\epsilon|}\sum_{n=0}^{N-1}j(u_{n,2}^{\epsilon})|u_{n,1}^{\eps}|^2\chi_n &- \sum_{n=0}^{N-1}v_{n,2}\chi_n\bigg\rVert_p^p = s\sum_{n=0}^{N-1}\lt\lVert v_{n,2}(|u_{n,1}^{\eps}|^2-1)\rt\rVert_p^p\\
	&\leq s\sum_{n=0}^{N-1} \lt\lVert v_{n,2}\rt\rVert_2^p\cdot \lt\lVert 1-|u_{n,1}^{\eps}|^2 \rt\rVert_{\frac{2p}{2-p}}^p\\
	&\leq \lt( s\sum_{n=0}^{N-1} \lt\lVert v_{n,2}\rt\rVert_2^2\rt)^{\frac{p}{2}} \lt( s\sum_{n=0}^{N-1} \lt\lVert 1-|u_{n,1}^{\eps}|^2\rt\rVert_{\frac{2p}{2-p}}^{\frac{2p}{2-p}}\rt)^{\frac{2-p}{2}}.
	\end{split}
	\end{equation*}
	It follows from Lemma \ref{l4.2} and \eqref{p334} that the above right side converges to zero, and therefore, we have
	\begin{equation}\label{l482}
	\frac{1}{|\ln\epsilon|}\sum_{n=0}^{N-1}j(u_{n,2}^{\epsilon})|u_{n,1}^{\eps}|^2\chi_n \rightarrow v_2 \quad\text{in } L^p(D;\mathbb R^2) \text{ for all } p<\frac{3}{2}.
	\end{equation}
	Combining \eqref{l481} with \eqref{l482} and using $v_1+v_2=v$, we have \eqref{p3.3.1}, and \eqref{p3.3.2} is an immediate consequence.
	
	To show \eqref{p3.3.3}, by direct computations using the fact $|u_{n,2}^{\eps}|=1$, we have
	\begin{equation*}
	\begin{split}
	|\hat{\nabla}u_n^{\eps}|^2 &= |\hat{\nabla}u_{n,1}^{\eps}|^2 + |u_{n,1}^{\eps}|^2 |\hat{\nabla}u_{n,2}^{\eps}|^2 + 2j(u_{n,1}^{\eps})\cdot j(u_{n,2}^{\eps})\\
	& \leq |\hat{\nabla}u_{n,1}^{\eps}|^2 + |\hat{\nabla}u_{n,2}^{\eps}|^2 + 2j(u_{n,1}^{\eps})\cdot j(u_{n,2}^{\eps}).
	\end{split}
	\end{equation*}
	Therefore, using $|u_{n,2}^{\eps}|=1$ again, we have 
	\begin{equation}\label{l484}
	s\sum_{n=0}^{N}\frac{E_{\epsilon}(u_n^{\epsilon})}{|\ln\epsilon|^2}\leq s\sum_{n=0}^{N}\frac{E_{\epsilon}(u_{n,1}^{\epsilon})}{|\ln\epsilon|^2} + s\sum_{n=0}^{N}\frac{E_{\epsilon}(u_{n,2}^{\epsilon})}{|\ln\epsilon|^2} + s\sum_{n=0}^{N}\int_{\Omega}\frac{j(u_{n,1}^{\eps})}{|\ln\eps|}\cdot \frac{j(u_{n,2}^{\eps})}{|\ln\eps|} d\hat x.
	\end{equation}
	Since $|j(u_{n,1}^{\eps})|\leq |\hat{\nabla}u_{n,1}^{\eps}|$, it follows from Lemma \ref{l4.3} that the sequence $\{\sum\frac{j(u_{n,1}^{\epsilon})}{|\ln\epsilon|}\chi_n\}$ is bounded in $L^2(D;\mb R^2)$. Since the sequence converges strongly to $v_1$ in $L^p(D)$ for $p<\frac{3}{2}$, we conclude that
	\begin{equation}\label{l483}
	\sum_{n=0}^{N-1}\frac{j(u_{n,1}^{\epsilon})}{|\ln\epsilon|}\chi_n \rightharpoonup v_1 \quad \text{ in } L^2(D;\mb R^2).
	\end{equation}
	We deduce from \eqref{l483}, \eqref{l4.6.1} and Lemma \ref{l4.2} that
	\begin{equation}\label{l485}
	s\sum_{n=0}^{N}\int_{\Omega}\frac{j(u_{n,1}^{\eps})}{|\ln\eps|}\cdot \frac{j(u_{n,2}^{\eps})}{|\ln\eps|} d\hat x \rightarrow \int_{D} v_1\cdot v_2 dx = 0.
	\end{equation}
	Finally, the estimate \eqref{p3.3.3} follows from \eqref{l484}, \eqref{l485} and Lemmas \ref{l4.3} and \ref{l4.6}, and the fact that $\lVert v\rVert_{L^2}^2 = \lVert v_1\rVert_{L^2}^2+\lVert v_2\rVert_{L^2}^2$.
\end{proof}

\section{Construction of the magnetic potential}

In this section, we construct the magnetic potential for the recovery sequence. Given $(v, \vec A) \in V\times K_0$, define
\begin{equation}\label{4.1}
\vec A^{\epsilon}(x) = |\ln\epsilon| \vec A(x)+\lt(h_{ex}-h_0|\ln\eps|\rt)\vec a(x)\in K,
\end{equation}
so that
\begin{equation*}
\frac{\vec A^{\epsilon}-h_{ex}\vec a}{|\ln\epsilon|} = \vec A - h_0 \vec a
\end{equation*}
and
\begin{equation}\label{3.2}
\frac{1}{2|\ln\epsilon|^2}\int_{\mathbb R^3} \left| \nabla\times \vec A^{\epsilon} - h_{ex}\vec e_3\right|^2 dx = \frac{1}{2}\int_{\mathbb R^3} \left| \nabla\times \vec A - h_{0}\vec e_3\right|^2 dx.
\end{equation}
The main result of this section is the following estimate:

\begin{lemma}\label{prop4.7}
	Given $\vec A \in K_0$, let $\vec A^{\eps}$ be defined as in \eqref{4.1}. Then we have
	\begin{equation}\label{prop4.7.0}
	\sum_{n=0}^{N-1}\frac{\hat A_n^{\eps}}{|\ln\eps|}\chi_n \rightarrow \hat A \quad\text{ in } L^2(D;\mb R^2) \text{ as } (\epsilon,s)\rightarrow (0,0),
	\end{equation}
	where $\hat A_n^{\eps}$ is the trace of $\hat A^{\eps}$ on $\Omega\times\{ns\}$.
\end{lemma}

\begin{proof}
	By the definition of $\vec A^{\eps}$ in \eqref{4.1}, we have
	\begin{equation*}
	\frac{\hat A_n^{\eps}}{|\ln\eps|} = \hat A_{n}+\lt(\frac{h_{ex}}{|\ln\eps|}-h_0\rt)\hat a_n,
	\end{equation*}
	where $\hat A_n^{\eps}$, $\hat A_{n}$ and $\hat a_n$ are the traces of the corresponding functions on $\Omega\times\{ns\}$. Therefore, by the triangle inequality, we have
	\begin{equation*}
	\begin{split}
	\bigg\lVert\sum_{n=0}^{N-1}&\frac{\hat A_n^{\eps}}{|\ln\eps|}\chi_n - \hat A \bigg\rVert_{L^2(D)} \\
	&\leq \lt\lVert\sum_{n=0}^{N-1}\hat A_{n}\chi_n - \hat A \rt\rVert_{L^2(D)} + \lt\lVert\sum_{n=0}^{N-1}\lt(\frac{h_{ex}}{|\ln\eps|}-h_0\rt)\hat a_n\chi_n \rt\rVert_{L^2(D)}.
	\end{split}
	\end{equation*}
	Since $\vec a$ is smooth on $\mathbb{R}^3$, in particular, it is bounded on $\overline{D}$. Using $\lim_{\eps\rightarrow 0}\frac{h_{ex}}{|\ln\eps|}=h_0$, it is clear that
	\begin{equation*}
	\lt\lVert\sum_{n=0}^{N-1}\lt(\frac{h_{ex}}{|\ln\eps|}-h_0\rt)\hat a_n\chi_n \rt\rVert_{L^2(D)} \rightarrow 0 \quad\text{ as } \epsilon\rightarrow 0.
	\end{equation*}
	It follows from Lemma \ref{l2.2} that
	\begin{equation*}
	\begin{split}
	\lt\lVert\sum_{n=0}^{N-1}\hat A_{n}\chi_n - \hat A \rt\rVert_{L^2(D)}^2 &= \sum_{n=0}^{N-1}\int_{ns}^{(n+1)s}\int_{\Omega}\lt|\hat A_n - \hat A\rt|^2 d\hat xdx_3\\
	& \leq s^2\int_{D}\lt| \nabla\vec{A}\rt|^2dx \rightarrow 0 \text{ as } s\rightarrow 0.
	\end{split}
	\end{equation*}
	The estimate (\ref{prop4.7.0}) follows from the above estimates.
\end{proof}

\section{Proof of Theorem \ref{T1}: completed}

In this section, we complete the proof of the upper bound in Theorem \ref{T1}. Then we give the proof of Corollary \ref{cor1} in the introduction.

\begin{proof}[Proof of \eqref{thm1.4}]
	Let $(v,\vec A)\in V\times K_0$. Using Proposition \ref{prop2.2}, we can find a sequence $\{v_k\}\subset V\cap C^{\infty}(\overline{D};\mb R^2)$ such that \eqref{prop2.2.1}-\eqref{prop2.2.2} hold. Using Theorem \ref{p3.3} for each $v_k$ and a diagonal argument, we can find $\{u_n^{\eps}\}$ such that
	\begin{equation*}
	v^{\epsilon,s}:=\frac{1}{|\ln\epsilon|}\sum_{n=0}^{N-1}j(u_n^{\epsilon})\chi_n\rightarrow v \quad\text{in } L^p(D;\mathbb R^2) \text{ for all } p<\frac{3}{2},
	\end{equation*} 
	\begin{equation*}
	w^{\epsilon,s}:=\frac{1}{|\ln\epsilon|}\sum_{n=0}^{N-1}J(u_n^{\epsilon})\chi_n\rightharpoonup w \quad\text{in } W^{-1,p}(D) \text{ for all } p<\frac{3}{2},
	\end{equation*}
	and
	\begin{equation}\label{t13}
	\limsup_{(\eps,s)\rightarrow(0,0)} s\sum_{n=0}^{N}\frac{E_{\epsilon}(u_n^{\epsilon})}{|\ln\epsilon|^2}\leq \frac{\lVert v \rVert_{L^2(D)}^2}{2} + |w|(D),
	\end{equation}
	where $w=\frac{1}{2}\cl v$. Define $\vec A^{\eps}$ as in \eqref{4.1}. By the energy upper bound \eqref{t13} and Theorem \ref{lem2.3}, it is clear that the configurations $(\{u_n^{\eps}\},\vec A^{\eps})$ satisfy the compactness estimates in Theorem \ref{T1}. 
	
	Next let us calculate the energy $\mathcal{G}_{LD}^{\eps,s}(\{u_n^{\eps}\},\vec A^{\eps})$ using the decomposition formula \eqref{splitting}. From \eqref{thm1.1} and Lemma \ref{prop4.7}, we have
	\begin{equation*}
	\sum\limits_{n=0}^{N-1}\frac{j(u_n^{\eps})}{|u_n^{\eps}||\ln\eps|}\chi_n \rightharpoonup v \text{ in } L^2(D;\mathbb{R}^2) \quad\text{and}\quad \sum_{n=0}^{N-1}\frac{\hat A_n^{\eps}}{|\ln\eps|}\chi_n \rightarrow \hat A \text{ in } L^2(D;\mb R^2).
	\end{equation*}
	Using exactly the same arguments that were used to establish (\ref{2.4.9}), we obtain from the above compactness that
	\begin{equation}\label{t24}
	\frac{s}{|\ln\eps|^2}\sum_n \int_\Omega(\hat{\nabla}u_n^{\eps},\imath u_n^{\eps})\cdot\hat{A}_n^{\eps}d\hat x \rightarrow \int_{D}v\cdot\hat Adx.
	\end{equation}
	Next, using $|u_n^{\eps}|\leq 1$ and Lemma \ref{prop4.7}, we have
	\begin{equation}\label{t16}
	\begin{split}
	\limsup_{(\eps,s)\rightarrow(0,0)} \frac{s}{|\ln\eps|^2}&\sum_n \int_\Omega\frac{1}{2}|u_n^{\eps}|^2|\hat{A}_n^{\eps}|^2d\hat x\\
	&\leq \lim_{(\eps,s)\rightarrow(0,0)} \frac{s}{|\ln\eps|^2}\sum_n \int_\Omega\frac{1}{2}|\hat{A}_n^{\eps}|^2d\hat x = \frac{1}{2}\lVert \hat A\rVert_{L^2(D)}^2.
	\end{split}
	\end{equation}
	Using $|u_n^{\eps}|\leq 1$ and the assumption \eqref{h}, it is clear that
	\begin{equation}\label{t22}
	\frac{s}{|\ln\eps|^2}\sum^{N-1}_{n=0}\int_\Omega\frac{1}{2\lambda^{2}s^2}\left| u_{n+1}^{\eps}-u_{n}^{\eps}e^{\imath \int_{ns}^{(n+1)s}(A^{\eps})^{3}dx_{3}}\right|^{2}d\hat x \rightarrow 0.
	\end{equation}
	Finally, plugging \eqref{t13}-\eqref{t22} and \eqref{3.2} into \eqref{splitting}, we obtain \eqref{thm1.4}. This completes the proof of Theorem \ref{T1}.
\end{proof}

Finally, we prove Corollary \ref{cor1} in Section 1. \blue{Note that here we restrict our attention to minimizers. From \cite{BK}, minimizers of the Lawrence-Doniach energy satisfy $|u_n|\leq 1$ a.e. in $\Omega$.}

\begin{proof}[Proof of Corollary \ref{cor1}]
	From the Euler-Lagrange equations \eqref{LDsystem}, we have
	\begin{equation*}
	\nabla\times(\nabla\times\vec{A}^{\epsilon,s})=(j_1,j_2,j_3) \quad\text{ in } \mathbb{R}^3,
	\end{equation*}
	where $j_i$ for $i=1,2,3$ are given in \eqref{j}. Using the fact $|u_n^{\eps}|\leq 1$ and the assumption \eqref{h}, we have
	\begin{equation}\label{cor1.1}
	\begin{split}
	\lt\lVert\frac{j_3}{|\ln\eps|}\rt\rVert_{L^2(\mb R^3)}^2&\leq \sum_{n=0}^{N-1}\int_{ns}^{(n+1)s}\int_{\Omega}\frac{s^2}{|\ln\eps|^2}\frac{1}{\lambda^4s^4}\lt|u_{n+1}^{\eps}-u_{n}^{\eps}e^{\imath \int_{ns}^{(n+1)s}(A^{\epsilon,s})^{3}dx_{3}}\rt|^2d\hat xdx_3\\
	&\leq CNs\cdot\frac{s^2}{|\ln\eps|^2}\frac{1}{\lambda^4s^4}=\frac{CL}{\lambda^4}\cdot\frac{1}{s^2|\ln\eps|^2}\rightarrow 0.
	\end{split}
	\end{equation}
	For $i=1,2$, define
	\begin{equation*}
	\begin{split}
	\tilde j_i &:= \sum_{n=0}^{N-1}(\partial_{i}u_{n}^{\eps}-\imath (A_n^{\epsilon,s})^iu_n^{\eps},\imath u_n^{\eps})\chi_n(x_3)\\
	&=\sum\limits_{n=0}^{N-1}(\partial_{i}u_{n}^{\eps},\imath u_n^{\eps})\chi_n(x_3) - \sum\limits_{n=0}^{N-1}(A_n^{\epsilon,s})^i|u_n^{\eps}|^2\chi_n(x_3).
	\end{split}
	\end{equation*}
	Let $\varphi\in C_c(\mb R^3)$ be a test function. It follows from the definitions of $j_i$ and $\tilde j_i$ that
	\begin{equation}\label{cor1.5}
	\begin{split}
	\int_{\mb R^3}&\varphi d(j_i-\tilde j_i)\\
	&=\sum_{n=0}^{N-1}\int_{ns}^{(n+1)s}\int_{\Omega}(\partial_{i}u_{n}^{\eps}-\imath (A_n^{\epsilon,s})^iu_n^{\eps},\imath u_n^{\eps})\lt(\varphi(\hat x,ns)-\varphi(\hat x,x_3)\rt)d\hat x dx_3\\
	&+s\int_{\Omega}(\partial_{i}u_{N}^{\eps}-\imath (A_N^{\epsilon,s})^iu_N^{\eps},\imath u_N^{\eps})\varphi(\hat x,Ns)d\hat x = I + II.
	\end{split}
	\end{equation}
	Using $|u_n^{\eps}|\leq 1$ and the uniform continuity of $\varphi$, we have
	\begin{equation*}
	\begin{split}
	|I| &\leq \sum_{n=0}^{N-1}\int_{ns}^{(n+1)s}\int_{\Omega}\lt|\partial_{i}u_{n}^{\eps}-\imath (A_n^{\epsilon,s})^iu_n^{\eps}\rt|\lt|\varphi(\hat x,ns)-\varphi(\hat x,x_3)\rt|d\hat x dx_3\\
	&\leq o_s(1) s\sum_{n=0}^{N-1}\int_{\Omega}\lt|\partial_{i}u_{n}^{\eps}-\imath (A_n^{\epsilon,s})^iu_n^{\eps}\rt|d\hat x\\
	&\leq o_s(1)C(\Omega) s\sum_{n=0}^{N-1}\lt(\int_{\Omega}\lt|\partial_{i}u_{n}^{\eps}-\imath (A_n^{\epsilon,s})^iu_n^{\eps}\rt|^2d\hat x\rt)^{\frac{1}{2}}\\
	&\leq o_s(1)C(\Omega) s^{\frac{1}{2}}\lt(s\sum_{n=0}^{N-1}\int_{\Omega}\lt|\partial_{i}u_{n}^{\eps}-\imath (A_n^{\epsilon,s})^iu_n^{\eps}\rt|^2d\hat x\rt)^{\frac{1}{2}},
	\end{split}
	\end{equation*}
	where in the last two steps in the above, we have used H\"{o}lder's and the Cauchy-Schwarz inequalities. Here $o_s(1)\rightarrow 0$ as $s\rightarrow 0$ and depends only on $\varphi$. From Theorem \ref{T1}, it is clear that the energy upper bound (\ref{thm1.5}) is satisfied by minimizers. Noticing that $\partial_{i}u_{n}^{\eps}-\imath (A_n^{\epsilon,s})^iu_n^{\eps}$ is the $i$th component of $\hat{\nabla}_{\hat A_n^{\epsilon,s}}u_n^{\eps}$, we obtain from the above that
	\begin{equation}\label{cor1.2}
	\frac{|I|}{|\ln\eps|} \leq o_s(1)Cs^{\frac{1}{2}} \rightarrow 0
	\end{equation}
	as $(\eps,s)\rightarrow (0,0)$. Similarly, using \eqref{thm1.5}, we have
	\begin{equation}\label{cor1.6}
	\begin{split}
	\frac{|II|}{|\ln\eps|}&\leq \sup|\varphi|\frac{s}{|\ln\eps|}\int_{\Omega}\lt|\partial_{i}u_{N}^{\eps}-\imath (A_N^{\epsilon,s})^iu_N^{\eps}\rt| d\hat x\\
	&\leq \sup|\varphi|\frac{s^{\frac{1}{2}}}{|\ln\eps|}\lt(s\int_{\Omega}\lt|\partial_{i}u_{N}^{\eps}-\imath (A_N^{\epsilon,s})^iu_N^{\eps}\rt|^2 d\hat x\rt)^{\frac{1}{2}}\leq C\sup|\varphi| s^{\frac{1}{2}}\rightarrow 0.
	\end{split}
	\end{equation}
	Putting \eqref{cor1.5}-\eqref{cor1.6} together, we deduce that
	\begin{equation}\label{cor1.7}
	\int_{\mb R^3}\varphi d\frac{j_i-\tilde j_i}{|\ln\eps|}\rightarrow 0.
	\end{equation}
	Using Theorem \ref{T1}, up to a subsequence, we have
	\begin{equation}
	\sum_{n=0}^{N-1}\frac{(\imath u_n^{\eps},\hat{\nabla}u_n^{\eps})}{|\ln\eps|}\chi_n(x_3)\rightharpoonup v_0 \quad\text{ in } L^{\frac{4}{3}}(D;\mb R^2)
	\end{equation}
	and 
	$$\frac{\vec A^{\epsilon,s} - h_{ex}\vec a}{|\ln\epsilon|} \rightharpoonup \vec A_0-h_0\vec a \quad\text{ in } \check H^1(\mathbb R^3;\mathbb R^3)$$
	for some $(v_0,\vec{A}_0)$. In particular, it is a standard consequence of $\Gamma$-convergence that $(v_0,\vec{A}_0)$ is a minimizer of the limiting functional $\mathcal{G}_{h_0}$. It follows from the upper bound \eqref{thm1.5} again that 
	\begin{equation*}
	s\sum_{n=0}^{N-1}\int_{\Omega}\lt(1-|u_n^{\eps}|^2\rt)^2d\hat x\leq C\eps^2|\ln\eps|^2 \rightarrow 0.
	\end{equation*}
	Using the above and arguments similar to those in the proof of \eqref{2.4.15}, one can show that, up to a subsequence,
	\begin{equation}\label{cor1.3}
	\sum\limits_{n=0}^{N-1}\frac{(A_n^{\epsilon,s})^i|u_n^{\eps}|^2}{|\ln\eps|}\chi_n(x_3)\rightarrow A_0^i \quad\text{ in } L^2(D).
	\end{equation}
	Using the definition of $\tilde j_i$ and \eqref{cor1.7}-\eqref{cor1.3}, we immediately obtain that, up to a subsequence,
	\begin{equation*}
	\lt|\int_{\mb R^3}\varphi d\frac{j_i}{|\ln\eps|}-\int_{D}\varphi(v_0^i-A_0^i)dx\rt|\rightarrow 0,
	\end{equation*}
	from which we conclude
	\begin{equation}\label{cor1.4}
	\frac{j_i}{|\ln\eps|}\rightharpoonup (v_0^i-A_0^i)\chi_D \quad\text{ in } \mathcal{M}(\mb R^3).
	\end{equation}
	Corollary \ref{cor1} then follows from \eqref{cor1.1} and \eqref{cor1.4}.
\end{proof}

\appendix
\section{Proof of Proposition \ref{prop2.2}}

In this appendix, we prove Proposition \ref{prop2.2}. We begin with the following local approximation result, whose proof adapts that for BV functions (see, e.g., \cite{EG}):

\begin{lemma}\label{l6.1}
	Assume $v\in V$. There exists a sequence $\{v_k\}_{k=1}^{\infty} \subset V\cap C^{\infty}(D;\mb R^2)$ such that
	\begin{equation}\label{l321}
	v_k\rightarrow v \quad\text{ in } L^2(D;\mathbb R^2)
	\end{equation}
	and 
	\begin{equation}\label{l322}
	|\text{curl}v_k|(D)\rightarrow |\text{curl}v|(D)
	\end{equation}
	as $k\rightarrow \infty$.
\end{lemma}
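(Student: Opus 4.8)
The plan is to adapt the classical Meyers--Serrin (``strict'') approximation argument for $BV$ functions, with the scalar measure $\cl v$ playing the role of the full gradient measure. The single algebraic fact that drives the whole argument is the Leibniz rule for the two-dimensional curl: for a scalar $\zeta\in C_c^{\infty}(D)$ and $v\in V$,
\[
\cl(\zeta v)=\zeta\,\cl v+\hat{\nabla}^{\perp}\zeta\cdot v,
\qquad \hat{\nabla}^{\perp}=(-\partial_2,\partial_1),
\]
combined with the observation that a partition of unity $\{\zeta_k\}$ satisfies $\sum_k\hat{\nabla}^{\perp}\zeta_k=\hat{\nabla}^{\perp}(\sum_k\zeta_k)=0$. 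This cancellation is what keeps the commutator terms generated by localization from accumulating.

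First I would exhaust $D$ by the open sets $U_k=\{x\in D:\text{dist}(x,\partial D)>1/(m+k)\}$ and set $V_k=U_{k+1}\setminus\overline{U_{k-1}}$ (with $U_0=U_{-1}=\emptyset$), so that $\{V_k\}$ is a locally finite open cover of $D$ in which each point lies in at most two of the $V_k$. Choose a smooth partition of unity $\{\zeta_k\}$ subordinate to $\{V_k\}$ with $\sum_k\zeta_k\equiv1$ on $D$. Let $\rho$ be a standard (even) mollifier, and for fixed $\epsilon>0$ select scales $\delta_k>0$ small enough that, for each $k$: (i) $\text{supp}(\rho_{\delta_k}*(\zeta_k v))\subset V_k$; (ii) $\lVert\rho_{\delta_k}*(\zeta_k v)-\zeta_k v\rVert_{L^2(D)}<\epsilon 2^{-k}$; and (iii) $\lVert\rho_{\delta_k}*(\hat{\nabla}^{\perp}\zeta_k\cdot v)-\hat{\nabla}^{\perp}\zeta_k\cdot v\rVert_{L^1(D)}<\epsilon 2^{-k}$. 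Each is possible since $\zeta_k v\in L^2(D)$ and $\hat{\nabla}^{\perp}\zeta_k\cdot v\in L^1(D)$, and mollification converges in the respective norms. Define $v_{\epsilon}=\sum_k\rho_{\delta_k}*(\zeta_k v)$; the sum is locally finite, so $v_{\epsilon}\in C^{\infty}(D;\mb R^2)$, and by (ii) together with $v=\sum_k\zeta_k v$,
\[
\lVert v_{\epsilon}-v\rVert_{L^2(D)}\leq\sum_k\lVert\rho_{\delta_k}*(\zeta_k v)-\zeta_k v\rVert_{L^2(D)}<\epsilon,
\]
which yields \eqref{l321} after sending $\epsilon\to0$.

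Next I would bound the total variation of $\cl v_{\epsilon}$. Mollifying the Leibniz rule gives
\[
\cl v_{\epsilon}=\sum_k\rho_{\delta_k}*(\zeta_k\,\cl v)+\sum_k\rho_{\delta_k}*(\hat{\nabla}^{\perp}\zeta_k\cdot v).
\]
Since $\sum_k\hat{\nabla}^{\perp}\zeta_k\cdot v=0$ pointwise, the second sum equals $\sum_k[\rho_{\delta_k}*(\hat{\nabla}^{\perp}\zeta_k\cdot v)-\hat{\nabla}^{\perp}\zeta_k\cdot v]$, whose $L^1(D)$-norm is $<\epsilon$ by (iii). For the first sum I would use duality: for scalar $\varphi\in C_c^0(D)$ with $\lVert\varphi\rVert_{\infty}\leq1$, moving the even mollifier onto $\varphi$,
\[
\int_D\varphi\sum_k\rho_{\delta_k}*(\zeta_k\,\cl v)\,dx=\sum_k\int_D\zeta_k(\rho_{\delta_k}*\varphi)\,d(\cl v)\leq\sum_k\int_D\zeta_k\,d|\cl v|=|\cl v|(D),
\]
using $|\rho_{\delta_k}*\varphi|\leq1$, $\zeta_k\geq0$, and $\sum_k\zeta_k=1$. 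Taking the supremum over such $\varphi$ shows $\lVert\sum_k\rho_{\delta_k}*(\zeta_k\,\cl v)\rVert_{L^1(D)}\leq|\cl v|(D)$, whence $\cl v_{\epsilon}\in L^1(D)$ (so $v_{\epsilon}\in V\cap C^{\infty}(D;\mb R^2)$) and
\[
|\cl v_{\epsilon}|(D)=\int_D|\cl v_{\epsilon}|\,dx\leq|\cl v|(D)+\epsilon.
\]

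Finally, taking $\epsilon=1/k$ produces $v_k:=v_{1/k}$ with $v_k\to v$ in $L^2(D;\mb R^2)$ and $\limsup_k|\cl v_k|(D)\leq|\cl v|(D)$. For the matching lower bound, the $L^2$ convergence forces $\cl v_k\rightharpoonup\cl v$ in the sense of distributions (test against $\hat{\nabla}^{\perp}\varphi$), and lower semicontinuity of the total variation under this convergence gives $\liminf_k|\cl v_k|(D)\geq|\cl v|(D)$; the two inequalities together yield \eqref{l322}. I expect the total-variation upper bound to be the main obstacle: one must correctly interpret $\zeta_k\,\cl v$ as a finite Radon measure, mollify it, and invoke the cancellation $\sum_k\hat{\nabla}^{\perp}\zeta_k=0$ so that localization contributes only an $O(\epsilon)$ error rather than an uncontrolled accumulation, the remaining steps being routine bookkeeping modeled on the $BV$ case.
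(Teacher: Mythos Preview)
Your proposal is correct and follows essentially the same Meyers--Serrin strategy as the paper: exhaust $D$, mollify against a partition of unity, and exploit the cancellation $\sum_k\hat\nabla^{\perp}\zeta_k=0$ to control the commutator terms. The only difference is bookkeeping in the upper bound---you use $\sum_k\int\zeta_k\,d|\cl v|=|\cl v|(D)$ directly, whereas the paper bounds each localized piece by $|\cl v|(U_k)$ and absorbs the overlap via a collar estimate $|\cl v|(D\setminus D_1)<\epsilon$; this is a cosmetic variation of the same argument.
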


\begin{proof}
	Fix $\epsilon>0$. Given some positive integer $m$, define
	\begin{equation*}
	D_k = \{x\in D: \text{dist}(x,\partial D)>\frac{1}{m+k}\}
	\end{equation*}
	for $k\in\mathbb N$. We may choose $m$ sufficiently large such that 
	\begin{equation}\label{l323}
	|\cl v|(D\setminus D_1)<\epsilon.
	\end{equation}
	Denote $U_k=D_{k+1}\setminus \overline D_{k-1}$ with the convention $U_1 = D_2$. Let the sequence $\{\zeta_k\}$ be a partition of unity subordinate to $\{U_k\}$, i.e., 
	\begin{equation}\label{l328}
	\begin{cases}
	\zeta_k\in C^{\infty}_{c}(U_k),\\
	\sum_{k=1}^{\infty} \zeta_k = 1 \text{ on } D.
	\end{cases}
	\end{equation}
	Let $\eta$ be the standard mollifier. For each $k$, choose $\epsilon_k>0$ sufficiently small such that
	\begin{equation}\label{l324}
	\begin{cases}
	\eta_{\epsilon_k}*(v\zeta_k)\in C_{c}^{\infty}(U_k),\\
	\lt\lVert \eta_{\epsilon_k}*(v\zeta_k)-v\zeta_k\rt\rVert_{L^2(U_k;\mathbb R^2)} < \frac{\epsilon}{2^k},\\
	\lVert\eta_{\epsilon_k}*(v\cdot\hat\nabla^{\perp}\zeta_k)-v\cdot\hat\nabla^{\perp}\zeta_k\rVert_{L^1(U_k)}< \frac{\epsilon}{2^k}.
	\end{cases}
	\end{equation}
	
	Define
	\begin{equation}\label{l327}
	v_{\epsilon}=\sum_{k=1}^{\infty}\eta_{\epsilon_k}*(v\zeta_k).
	\end{equation}
	It is clear that $v_{\epsilon}\in C^{\infty}(D)$. By \eqref{l324}, we have
	\begin{equation}\label{l325}
	\lVert v-v_{\epsilon} \rVert_{L^2(D;\mathbb R^2)} < \epsilon \rightarrow 0
	\end{equation}
	as $\epsilon \rightarrow 0$. Let $\varphi\in C_{c}^1(D)$ be a test function such that $\sup|\varphi|\leq 1$. It follows from \eqref{l325} that
	\begin{equation*}
	-\int_{D}v\cdot\hat\nabla^{\perp}\varphi = -\lim_{\epsilon\rightarrow 0} \int_{D} v_{\epsilon}\cdot\hat\nabla^{\perp}\varphi\leq \liminf_{\epsilon\rightarrow 0}|\cl v_{\epsilon}|(D).
	\end{equation*}
	Hence we have
	\begin{equation}\label{l326}
	|\cl v|(D) \leq \liminf_{\epsilon\rightarrow 0}|\cl v_{\epsilon}|(D).
	\end{equation}
	
	Next, using the definition of $v_{\epsilon}$ in \eqref{l327}, we have
	\begin{equation}\label{l329}
	\begin{split}
	-\int_{D}v_{\epsilon}&\cdot\hat\nabla^{\perp}\varphi = -\sum_{k=1}^{\infty}\int_{D}\eta_{\epsilon_k}*(v\zeta_k)\cdot\hat\nabla^{\perp}\varphi\\
	&=-\sum_{k=1}^{\infty}\int_D v\zeta_k \cdot\hat\nabla^{\perp}(\eta_{\epsilon_k}*\varphi)\\
	&=-\sum_{k=1}^{\infty}\int_D v\cdot\hat\nabla^{\perp}(\zeta_k (\eta_{\epsilon_k}*\varphi)) + \sum_{k=1}^{\infty}\int_D v\cdot\hat\nabla^{\perp}\zeta_k(\eta_{\epsilon_k}*\varphi)\\
	&=-\sum_{k=1}^{\infty}\int_D v\cdot\hat\nabla^{\perp}\lt(\zeta_k (\eta_{\epsilon_k}*\varphi)\rt) + \sum_{k=1}^{\infty}\int_D \lt(\eta_{\epsilon_k}*(v\cdot\hat\nabla^{\perp}\zeta_k)-v\cdot\hat\nabla^{\perp}\zeta_k\rt)\varphi\\
	&= I_1^{\epsilon} + I_2^{\epsilon}.
	\end{split}\end{equation}
	Here we have used $\sum_{k=1}^{\infty}\hat\nabla^{\perp}\zeta_k = \hat\nabla^{\perp}(\sum_{k=1}^{\infty}\zeta_k) = 0$, which is a consequence of the definition of $\zeta_k$ in \eqref{l328}. By \eqref{l324}, we have $I_2^{\epsilon}\rightarrow 0$ as $\epsilon\rightarrow 0$. For $I_1^{\epsilon}$, we have
	\begin{equation*}
	-\sum_{k=1}^{\infty}\int_D v\cdot\hat\nabla^{\perp}(\zeta_k (\eta_{\epsilon_k}*\varphi)) = -\int_D v\cdot\hat\nabla^{\perp}(\zeta_1 (\eta_{\epsilon_1}*\varphi)) - \sum_{k=2}^{\infty}\int_D v\cdot\hat\nabla^{\perp}(\zeta_k (\eta_{\epsilon_k}*\varphi)). 
	\end{equation*}
	Note that, since $\sup|\varphi|\leq 1$, we have $|\zeta_k(\eta_{\epsilon_k}*\varphi)|\leq 1$. Therefore, we have
	\begin{equation*}
	-\int_D v\cdot\hat\nabla^{\perp}(\zeta_1 (\eta_{\epsilon_1}*\varphi)) \leq |\cl v|(D) \quad\text{and}\quad -\int_D v\cdot\hat\nabla^{\perp}(\zeta_k (\eta_{\epsilon_k}*\varphi)) \leq |\cl v|(U_k).
	\end{equation*}
	Note that each point in $D$ belongs to at most three sets of $\{U_k\}_{k=1}^{\infty}$. Hence, using \eqref{l323}, we have
	\begin{equation}\label{l3210}
	\begin{split}
	-\sum_{k=1}^{\infty}\int_D v\cdot\hat\nabla^{\perp}&(\zeta_k (\eta_{\epsilon_k}*\varphi)) \leq |\cl v|(D) + \sum_{k=2}^{\infty} |\cl v|(U_k)\\
	&\leq |\cl v|(D) + 3 |\cl v|(D\setminus D_1) \leq |\cl v|(D) + 3\epsilon.
	\end{split}
	\end{equation}
	We deduce from \eqref{l329} and \eqref{l3210} that
	\begin{equation}\label{l3211}
	\limsup_{\epsilon\rightarrow 0} |\cl v_{\epsilon}|(D) \leq |\cl v|(D). 
	\end{equation}
	Combining \eqref{l3211} with \eqref{l326}, we obtain \eqref{l322}.
\end{proof}

Next we show that the above local approximation can be improved to global approximation, given that our domain $D$ is sufficiently smooth. We do this by showing that, for every $v\in V$, one can extend it to some $\tilde{v}\in L^2(\mb R^3;\mb R^2)$ such that $\text{curl}\tilde v$ is a finite Radon measure and $|\text{curl}\tilde v|(\partial D)= 0$. Such arguments follow the extension techniques for BV functions in Lipschitz domains. (See, e.g., \cite{AFP}, Chapter 3.) In the following, for an open set $U\subset\mb R^3$, we define
\begin{equation*}
V(U):=\{v\in L^2(U;\mb R^2): \cl v \in \mathcal{M}(U)\}.
\end{equation*}

\begin{lemma}\label{l6.2}
	There exists an extension operator $T: V(D)\rightarrow V(\mb R^3)$, such that, for all $v\in V(D)$, $Tv|_{D} = v$ and $|\cl Tv|(\partial D)=0$.
\end{lemma}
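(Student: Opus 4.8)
The plan is to construct $T$ by reflecting $v$ across $\partial D$, in the spirit of the extension theory for $BV$ functions (see \cite{AFP}, Chapter 3), but adapted to the fact that here $\cl$ involves only the horizontal derivatives $\hat\nabla=(\partial_1,\partial_2)$. The boundary decomposes as $\partial D=(\Omega\times\{0,L\})\cup(\partial\Omega\times(0,L))$, and the two faces behave very differently: the horizontal curl ``sees'' the lateral surface $\partial\Omega\times(0,L)$ but is blind to the horizontal faces $\Omega\times\{0,L\}$. I would therefore extend first in the $x_3$-variable and then in $\hat x$, which also sidesteps the edges $\partial\Omega\times\{0,L\}$.

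\emph{Vertical step.} Reflect $v$ evenly across $\{x_3=0\}$ and $\{x_3=L\}$ to obtain a field on $\Omega\times(-\delta,L+\delta)$. Because $\hat\nabla^{\perp}=(-\partial_2,\partial_1)$ contains no $\partial_3$, testing $-\int\tilde v\cdot\hat\nabla^{\perp}\varphi$ and splitting into $\{x_3>0\}$ and $\{x_3<0\}$ involves integration by parts in $\hat x$ only; the planes $\{x_3=0,L\}$ are level sets of $x_3$ and are therefore not boundaries for this integration, so no interface term is produced. Hence $\cl$ of the vertically reflected field is exactly the even reflection of $\cl v$ and assigns no mass to the horizontal faces.

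\emph{Horizontal step.} Cover $\partial\Omega$ by finitely many charts in which a smooth diffeomorphism $\hat\Phi_j$, extended by the identity in $x_3$ to $\Phi_j$, flattens $\partial\Omega$ to a segment of $\{x_1=0\}$ with $\Omega$ mapped into $\{x_1>0\}$. Pulling $v$ back as a $1$-form, $w=(D\hat\Phi_j)^{T}(v\circ\Phi_j)$, preserves membership in $V$ and transforms the horizontal curl by a smooth positive density, so $\cl w$ is a finite measure iff $\cl v$ is. On the flattened picture I reflect $w=(w_1,w_2)$ by taking the normal component $w_1$ odd and the \emph{tangential} component $w_2$ even across $\{x_1=0\}$. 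The decisive computation is a Gauss--Green identity for fields in $V$: the interface contributions from the two sides equal $\pm\int_{\{x_1=0\}}\varphi\,(w\cdot\tau)$, where $w\cdot\tau=w_2$ is the tangential trace; the even reflection makes the two one-sided tangential traces coincide, so these terms cancel and $\cl$ of the reflected field charges no part of $\{x_1=0\}$. Pushing forward by $\Phi_j$ and patching the local extensions by a smooth partition of unity $\{\psi_j\}$ subordinate to the charts (with an interior piece $\psi_0 v$) gives a field on a neighborhood of $\overline D$; each cutoff only adds a term $(\hat\nabla^{\perp}\psi_j)\cdot v\in L^2$, an absolutely continuous measure that charges no Lebesgue-null set, in particular not $\partial D$. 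Finally multiply by a fixed $\chi\in C_c^{\infty}(\mb R^3)$ with $\chi\equiv1$ near $\overline D$ to land in $L^2(\mb R^3;\mb R^2)$, hence in $V(\mb R^3)$; since $\chi\equiv1$ near $\overline D$ this leaves $Tv|_D=v$ unchanged and adds no curl near $\partial D$. All steps are linear in $v$, so $T$ is a linear extension operator, and assembling the two steps yields $|\cl Tv|(\partial D)=0$.

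I expect the crux to be the rigorous justification of the tangential-trace cancellation when $v$ is merely in $V$, so that $w\cdot\tau$ is not classically defined. Two routes are available: one may develop a Gauss--Green/tangential-trace theory for $V$ (the analogue of $H(\cl)$, with an $L^2$ field and a measure-valued curl, in which $w\cdot\tau$ makes sense as a distribution on $\{x_1=0\}$), or one may first establish the interface identity for the smooth approximants of Lemma \ref{l6.1} and pass to the limit. In the latter approach the total-variation convergence $|\cl v_k|(D)\to|\cl v|(D)$ is exactly what is needed: combined with weak convergence of $\cl v_k$ to $\cl v$ in $\mathcal D'(D)$ and a comparison of total masses, it forces weak-$*$ convergence of $\cl v_k$ to $\cl v$ in $\mathcal M(\overline D)$ with no mass escaping to $\partial D$, which lets the cancellation identity survive in the limit.
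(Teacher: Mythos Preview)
Your proposal is correct and follows essentially the same strategy as the paper: localize by a partition of unity, flatten the lateral boundary, reflect with tangential component even and normal component odd, and justify the vanishing of the interface curl by approximating with the smooth fields of Lemma~\ref{l6.1}. The paper differs only cosmetically---it handles the horizontal faces $\Omega\times\{0,L\}$ by extending by zero rather than by even reflection (both work, as you note, since $\cl$ contains no $\partial_3$), and it is less explicit about the $1$-form pullback under the flattening map, but it carries out the approximation/tangential-trace argument (your second route) in somewhat more detail.
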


\begin{proof}
	
	The proof is divided into four steps.
	
	\emph{Step 1.} First, we make some simplifications. Since $\Omega\subset \mb R^2$ is a smooth domain, we can find finitely many open rectangles $\{R_i\}$ such that $\overline{\Omega}\subset\cup_i R_i$, and each $R_i$ satisfies either $R_i\subset\Omega$ or $R_i\cap\partial\Omega\ne\emptyset$. If $R_i\cap\partial\Omega\ne\emptyset$, by a rotation and translation, we may assume that $\partial\Omega\cap R_i$ is the graph of a smooth function defined on one side of $R_i$, and that it does not intersect with this side and the opposite side of $R_i$. Let $\{\zeta_i\}$ be a partition of unity subordinate to the covering $\{R_i\}$. Let $Q_i=R_i\times(0,L)$. We only need to define appropriate extensions $T_i$ on each $Q_i$ that satisfy the conclusions of Lemma \ref{l6.2}. Then we can define $Tv = \sum_i T_iv_i$ with $v_i=v \zeta_i$ in $\cup_i Q_i$, and extend $Tv$ to be zero in $\mb R^3\setminus\cup_iQ_i$. One can check that $Tv\in V(\mb R^3)$ satisfies the conclusions of Lemma \ref{l6.2}. Now we fix some $R_i$ with $R_i\cap \partial\Omega\ne\emptyset$. By a rotation and dilation, we may assume $R_i=l\times(-1,1)$ for some open interval $l\subset\mb R$. By a smooth deformation, we may assume that $R_i\cap\Omega=l\times(0,1)$. Such simplifications are standard in the BV setting. (See, e.g., \cite{AFP}, Chapter 3.) In the following, we define the extension $T_i$ on $Q_i$. We omit the subscript $i$ in the rest of the proof. We denote $Q^+ = Q\cap D$, $Q^- = Q\setminus\overline{Q^+}$, and $\Gamma = Q\cap\partial D$.
	
	\emph{Step 2.} Assume $v\in C^{\infty}(\overline{Q^+})$. We define $T$ to be a reflection across $\Gamma$:
	\begin{equation}\label{T}
	Tv(x_1,x_2,x_3) = 
	\begin{cases}
	\lt(v^1(x_1,x_2,x_3),v^2(x_1,x_2,x_3)\rt) & \text{ if } x\in \overline{Q^+},\\
	\lt(v^1(x_1,-x_2,x_3), -v^2(x_1,-x_2,x_3)\rt) & \text{ if } x\in Q^-.
	\end{cases}
	\end{equation}
	One can easily check that $Tv\in V(Q)$ and $|\cl Tv|(\Gamma)=0$. Moreover, we have $|\cl Tv|(Q)\leq 2|\cl v|(Q^+)$.
	
	\emph{Step 3.} Assume $v\in C^{\infty}(Q^+)$. We define $Tv$ as in \eqref{T} in $Q^-$. It is clear that $Tv\in L^2(Q)$. Let $Q_{\eps}^+=\{x\in Q:x_2>\eps\}$ and $\Gamma_{\eps}=\{x\in Q: x_2=\eps\}$. We define $v^{\eps}(x_1,x_2,x_3)=v(x_1,x_2+\eps,x_3)$. Let $\tau=(1,0,0)$ and $(v^{\eps})_{\tau}=v^{\eps}(x_1,0,x_3)\cdot\tau$ be the planar tangential component of $v$ on $\Gamma_{\eps}$, where we identity two-dimensional vector fields as three-dimensional vector fields with the $x_3$ component equal to zero. Let $\varphi\in C^1_c(Q)$ be a test function with $\sup|\varphi|\leq 1$. Using an integration by parts, and noting that $v$ has compact support in $Q^+$ with respect to the sides of $Q$, we have
	\begin{equation*}
	-\int_{Q^+} v\cdot\hat\nabla^{\perp}\varphi dx = -\lim_{\eps\rightarrow 0} \int_{Q^+_{\eps}}v\cdot\hat\nabla^{\perp}\varphi dx = \lim_{\eps\rightarrow 0}\left(\int_{Q^+_{\eps}}\cl v\varphi dx - \int_{\Gamma_{\eps}} (v\varphi)\cdot\tau d\mathcal{H}^2 \right).
	\end{equation*}
	Since $\cl v \in (L^1(Q^+))$, we have
	\begin{equation*}
	\lim_{\eps\rightarrow 0}\int_{Q^+_{\eps}}\cl v\varphi dx = \int_{Q^+}\cl v\varphi dx.
	\end{equation*}
	It follows that $\lim_{\eps\rightarrow 0}\int_{\Gamma_{\eps}} (v\varphi)\cdot\tau d\mathcal{H}^2$ exists and
	\begin{equation}\label{l627}
	\lim_{\eps\rightarrow 0}\int_{\Gamma_{\eps}} (v\varphi)\cdot\tau d\mathcal{H}^2= \int_{Q^+} v\cdot\hat\nabla^{\perp}\varphi dx+ \int_{Q^+}\cl v\varphi dx.
	\end{equation}
	Using an integration by parts, we have that
	\begin{equation}\label{l628}
	\int_{\Gamma}(v^{\eps}\varphi^{\eps} - v^{\eps}\varphi)\cdot \tau d\mathcal{H}^2= \int_{Q^+}(\cl v^{\eps}\varphi^{\eps}-\cl v^{\eps}\varphi) dx+ \int_{Q^+}v^{\eps}\cdot(\hat\nabla^{\perp}\varphi^{\eps}-\hat\nabla^{\perp}\varphi)dx. 
	\end{equation}
	Since $\varphi^{\eps}\rightarrow \varphi$ uniformly and $\cl v^{\eps}$ is uniformly bounded in $L^1(Q^+)$, we have
	\begin{equation}\label{l629}
	\int_{Q^+}(\cl v^{\eps}\varphi^{\eps}-\cl v^{\eps}\varphi)dx \rightarrow 0.
	\end{equation}
	A similar argument yields 
	\begin{equation}\label{l630}
	\int_{Q^+}v^{\eps}\cdot(\hat\nabla^{\perp}\varphi^{\eps}-\hat\nabla^{\perp}\varphi)dx \rightarrow 0.
	\end{equation}
	Therefore, we conclude from \eqref{l627}-\eqref{l630} that
	\begin{equation}\label{l631}
	\lim_{\eps\rightarrow 0}\int_{\Gamma} (v^{\eps}\varphi)\cdot \tau d\mathcal{H}^2= \int_{Q^+} v\cdot\hat\nabla^{\perp}\varphi dx + \int_{Q^+}\cl v\varphi dx.
	\end{equation}
	Using a similar argument in $Q^-$ and noting that $Tv|_{Q^-}$ is a reflection of $v$, we deduce that
	\begin{equation*}
	-\int_Q Tv\cdot\hat\nabla^{\perp}\varphi dx= \int_{Q^+} \cl v \varphi dx+ \int_{Q^-} \cl Tv \varphi dx,
	\end{equation*}
	which implies $\cl Tv\in\mathcal{M}(Q)$, and $|\cl Tv|(\Gamma) = 0$.
	
	\emph{Step 4.} Finally, assume $v\in V(Q^+)$. We define $Tv$ to be the reflection as in Step 3. Using Lemma \ref{l6.1}, we can find a sequence $\{v_k\}\subset V(Q^+)\cap C^{\infty}(Q^+)$ such that \eqref{l321} and \eqref{l322} hold. According to \eqref{l631}, we can define $(v_k)_{\tau}$ as the limit of $(v_k^{\eps})_{\tau}$ in the sense of weak convergence of measures. Let $\varphi$ be as in Step 3. For any $0<\eps<1$ and any $k$ and $l$, we have
	\begin{equation}\label{l633}
	\begin{split}
	\lt\lvert \int_{\Gamma}(v_k)_{\tau}\varphi d\haus^2-\int_{\Gamma}(v_l)_{\tau}\varphi d\haus^2 \rt\rvert &\leq \frac{1}{\eps}\int_{0}^{\eps}\lt|\int_{\Gamma}(v_k)_{\tau}\varphi d\haus^2-\int_{\Gamma}(v_k^t)_{\tau}\varphi^t d\haus^2 \rt|dt\\
	&+\frac{1}{\eps}\int_{0}^{\eps}\lt|\int_{\Gamma}(v_l)_{\tau}\varphi d\haus^2-\int_{\Gamma}(v_l^t)_{\tau}\varphi^t d\haus^2 \rt|dt\\
	&+\frac{1}{\eps}\int_{0}^{\eps}\lt|\int_{\Gamma}(v_k^t)_{\tau}\varphi^t d\haus^2-\int_{\Gamma}(v_l^t)_{\tau}\varphi^t d\haus^2 \rt|dt.
	\end{split}
	\end{equation} 
	By \eqref{l631}, we have
	\begin{equation}\label{l635}
	\begin{split}
	\bigg|\int_{\Gamma}(v_k)_{\tau}\varphi d\haus^2&-\int_{\Gamma}(v_k^t)_{\tau}\varphi^t d\haus^2 \bigg|\\
	&\leq \lt|\int_{Q^+\setminus Q^+_{t}}\cl v_k\varphi dx\rt| + \lt|\int_{Q^+\setminus Q^+_{t}}v_k\cdot\hat{\nabla}^{\perp}\varphi dx\rt|\\
	&\leq C\lt(\lVert \cl v_k\rVert_{L^1(Q^+\setminus Q^+_t)}+\lVert v_k\rVert_{L^2(Q^+\setminus Q^+_t)}\rt)\\
	&\leq C\lt(\lVert \cl v_k\rVert_{L^1(Q^+\setminus Q^+_{\eps})}+\lVert v_k\rVert_{L^2(Q^+\setminus Q^+_{\eps})}\rt).
	\end{split}
	\end{equation}
	Also we have
	\begin{equation}\label{l634}
	\frac{1}{\eps}\int_{0}^{\eps}\lt|\int_{\Gamma}(v_k^t)_{\tau}\varphi^t d\haus^2-\int_{\Gamma}(v_l^t)_{\tau}\varphi^t d\haus^2 \rt|dt \leq \frac{1}{\eps}\int_{Q^+\setminus Q^+_{\eps}}\lt|v_k-v_l \rt||\varphi|dx.
	\end{equation}
	For all $\eps$ sufficiently small, by taking the supremum over all test functions that are compactly supported in $Q^+\setminus \overline{Q^+_{\eps}}$ in \eqref{l329}, we obtain $$\limsup_{k\rightarrow\infty}\lVert \cl v_k\rVert_{L^1(Q^+\setminus Q^+_{\eps})}\leq |\cl v|(Q^+\setminus \overline{Q^+_{2\eps}}).$$ 
	Therefore, by first letting $k,l\rightarrow\infty$ and then letting $\eps\rightarrow 0$, we deduce from \eqref{l633}-\eqref{l634} that $\lim_{k\rightarrow \infty}\int_{\Gamma}(v_k)_{\tau}\varphi d\haus^2$
	exists. Define
	\begin{equation}\label{l638}
	\int_{\Gamma}(v)_{\tau}\varphi d\haus^2 = \lim_{k\rightarrow \infty}\int_{\Gamma}(v_k)_{\tau}\varphi d\haus^2.
	\end{equation}
	The above definition of $(v)_{\tau}$ does not depend on the sequence $\{v_k\}$. Using \eqref{l321}, \eqref{l322}, \eqref{l631} and \eqref{l638}, we have
	\begin{equation*}
	\begin{split}
	-\int_{Q^+} v\cdot\hat\nabla^{\perp}\varphi dx = -\lim_{k\rightarrow 0}\int_{Q^+} v_k\cdot&\hat\nabla^{\perp}\varphi dx= \lim_{k\rightarrow\infty}\lt(\int_{Q^+}\cl v_k\varphi dx-\int_{\Gamma}(v_k)_{\tau}\varphi d\haus^2\rt)\\
	& \leq \liminf_{k\rightarrow\infty}|\cl v_k|(Q^+)-\int_{\Gamma}(v)_{\tau}\varphi d\haus^2\\
	& =|\cl v|(Q^+)-\int_{\Gamma}(v)_{\tau}\varphi d\haus^2.
	\end{split}
	\end{equation*}
	Since $Tv$ in $Q^-$ is a reflection of $v$, the traces of $Tv|_{Q^+}$ and $Tv|_{Q^-}$ in the sense of \eqref{l638} agree on $\Gamma$. It follows that
	\begin{equation*}
	\begin{split}
	-\int_{Q} Tv\cdot\hat\nabla^{\perp}\varphi dx \leq 2|\cl v|(Q^+).
	\end{split}
	\end{equation*}
	Taking the supremum over all $\varphi$ with $\sup|\varphi|\leq 1$, we conclude that $\cl Tv\in \mathcal{M}(Q)$, and $|\cl Tv|(Q)\leq 2|\cl v|(Q^+)$. On the other hand, we have $|\cl Tv|(Q)=|\cl Tv|(Q^+)+|\cl Tv|(Q^-)+|\cl Tv|(\Gamma) \geq 2|\cl v|(Q^+)$. Therefore, it is clear that $|\cl Tv|(\Gamma)=0$. 
\end{proof}

Combining Lemmas \ref{l6.1} and \ref{l6.2}, we obtain Proposition \ref{prop2.2}.

\end{document}